\setlist[enumerate,1]{label=\textup{(\arabic*)}}
\tikzset{cd/.style=matrix of math nodes,row sep=2em,column sep=2em, text height=1.5ex, text depth=0.5ex}
\tikzset{cdar/.style=->,auto}
\tikzset{overar/.style={draw=white,double=black,double distance=.4pt,very thick}}
\renewcommand{\PrintDOI}[1]{\href{http://dx.doi.org/\detokenize{#1}}{doi: \detokenize{#1}}}
\numberwithin{equation}{section}
\theoremstyle{plain}
\newtheorem{theorem}[equation]{Theorem}
\newtheorem{lemma}[equation]{Lemma}
\newtheorem{proposition}[equation]{Proposition}
\newtheorem{corollary}[equation]{Corollary}
\theoremstyle{definition}
\newtheorem{definition}[equation]{Definition}
\theoremstyle{remark}
\newtheorem{remark}[equation]{Remark}
\newtheorem{example}[equation]{Example}
\newcommand{\tenscorep}{\mathbin{\begin{tikzpicture}[baseline,x=.75ex,y=.75ex] \draw[line width=.2pt] (-0.8,1.15)--(0.8,1.15);\draw[line width=.2pt](0,-0.25)--(0,1.15); \draw[line width=.2pt] (0,0.75) circle [radius = 1];\end{tikzpicture}}}
\newcommand*{\Braiding}[2]{\begin{tikzpicture}[baseline]
    \draw[-] (0,0) -- (1.4ex,1.4ex) node[right,inner sep=0pt] {$\scriptstyle #2$};
    \draw[-,draw=white,line width=2.4pt] (0,1.4ex) -- (1.4ex,0);
    \draw[-] (1.4ex,0) -- (0,1.4ex) node[left,inner sep=0pt] {$\scriptstyle #1$};
  \end{tikzpicture}}
\newcommand*{\corep}[1]{\textup{#1}}          
\newcommand*{\Corep}[1]{\mathbb{#1}}          
\newcommand*{\DuCorep}[1]{\hat{\Corep{#1}}}   
\newcommand*{\Rmattxt}{R}
\newcommand*{\Bialg}[1]{(#1,\Comult[#1])}
\newcommand*{\YDCorepcat}{\mathcal{YD}\mathfrak{Rep}}
\newcommand*{\dom}{\mathcal D}
\newcommand*{\Sp}{\text{Sp}}
\newcommand*{\CLS}{{-\textup{closed linear span}}}
\newcommand*{\nb}{\nobreakdash}
\newcommand*{\Star}{$^*$\nb-}
\newcommand*{\C}{\mathbb C}
\newcommand*{\Z}{\mathbb Z}
\newcommand*{\R}{\mathbb R}
\newcommand*{\N}{\mathbb N}
\newcommand*{\T}{\mathbb T}
\newcommand*{\G}[1][G]{\mathbb #1}
\newcommand*{\DuG}[1][G]{\widehat{\mathbb{#1}}}
\newcommand*{\qSU}{\textup{SU}_{\textup{q}}(2)}
\newcommand*{\qU}{\textup{U}_{\textup{q}}(2)}
\newcommand*{\qE}{\textup{E}_{\textup{q}}(2)}
\newcommand*{\Comult}[1][]{\Delta_{#1}}
\newcommand*{\DuComult}[1][]{\hat{\Delta}_{#1}}
\newcommand*{\Qgrp}[2]{\mathbb{#1}=(#2,\Comult[#2])}
\newcommand*{\DuQgrp}[2]{\widehat{\mathbb{#1}}=(\hat{#2},\DuComult[#2])}
\newcommand*{\Mod}[1]{\abs{#1}}
\newcommand*{\Ph}[1]{\Phi_{#1}}
\newcommand*{\Bound}{\mathbb B}
\newcommand*{\Comp}{\mathbb K}
\newcommand*{\transpose}{\mathsf T}
\newcommand*{\Contvin}{\textup C_0}
\newcommand*{\Cont}{\textup C}
\newcommand*{\QuantExp}[1][]{F_{|q|}{#1}}
\newcommand*{\Mor}{\textup{Mor}}
\newcommand*{\Id}{\textup{id}}
\newcommand*{\Multunit}[1][]{\mathbb W}
\newcommand*{\multunit}[1][]{\textup W^{#1}}
\newcommand*{\BrMultunit}{\mathbb F}
\newcommand*{\Rmat}{\textup R}
\newcommand*{\Flip}{\Sigma}
\newcommand*{\flip}{\sigma}
\newcommand*{\Cst}{\textup C^*}
\newcommand*{\Cred}{\textup C^*_\textup r}
\newcommand*{\Cstcat}{\mathfrak{C^*alg}}
\newcommand*{\Corepcat}{\mathfrak{Rep}}
\newcommand*{\Hils}[1][H]{\mathcal{#1}}
\newcommand*{\Mult}{\mathcal M}
\newcommand*{\U}{\mathcal U}
\newcommand*{\defeq}{\mathrel{\vcentcolon=}}
\newcommand*{\abs}[1]{\lvert#1\rvert}
\newcommand*{\norm}[1]{\lVert#1\rVert}
\newcommand*{\conj}[1]{\overline{#1}}
\newcommand*{\cl}[1]{\overline{#1}}
\newcommand*{\Specn}{\cl{\C}^{|q|}}
\newcommand*{\Aff}{\sp{ }\eta\sp{ }}
\DeclareMathOperator{\Hom}{Hom}
\begin{document}
\title[Quantum $\textup{E(2)}$ groups for complex deformation parameters]{Quantum $\textup{E(2)}$ groups for complex deformation parameters}

\author{Atibur Rahaman}
\email{atibur.rahaman@niser.ac.in}
\address{School of Mathematical Sciences\\
 National Institute of Science Education and Research  Bhubaneswar, HBNI\\
 Jatni, 752050\\
 India}

\author{Sutanu Roy}
\email{sutanu@niser.ac.in}
\address{School of Mathematical Sciences\\
 National Institute of Science Education and Research  Bhubaneswar, HBNI\\
 Jatni, 752050\\
 India}

\begin{abstract}
 We construct a family of~\(q\) deformations of~\(\textup{E}(2)\) group for nonzero complex parameters~\(\Mod{q}<1\) as locally compact braided quantum groups over the circle group~\(\T\) viewed as a quasitriangular quantum group with respect to the 
 unitary~\(\Rmattxt\)\nb-matrix~\(\Rmat(m,n)\defeq (q/\bar{q})^{mn}\) for all~\(m,n\in\Z\). For real~\(0<\Mod{q}<1\), the deformation  coincides with Woronowicz's~\(\qE\) groups. As an application, we study the braided analogue of the contraction procedure between \(\textup{SU}_{q}(2)\) and~\(\textup{E}_{q}(2)\) groups in the spirit of Woronowicz's quantum analogue of the classic In\"on\"u\nb-Wigner group contraction. 
 Consequently, we obtain the bosonisation of braided \(\textup{E}_{q}(2)\) groups by 
 contracting \(\textup{U}_{q}(2)\) groups.
\end{abstract}

\subjclass[2020]{Primary: 46L89; secondary: 81R50, 18M15}
\keywords{braided multiplicative unitary, braided quantum E(2) group, quasitriangular quantum group, R-matrix, bosonisation, contraction}
\thanks{The first author was supported by a UGC NET-JRF fellowship, India. The second author was partially 
supported by an Early Career Research Award given by SERB-DST, Government of India Grant No. 
ECR/2017/001354.}

\maketitle

\section{Introduction}
\label{sec:introduction}
Quantum~\(\textup{E(2)}\) groups were constructed as \(q\)\nb-deformations of the double cover of the group of motions of the Euclidean plane \(\textup{E}(2)\) for real deformation parameters~\(0<q<1\)~\cites{W1991a,W1991b}. They were used as a prototype for constructing a large class of examples of locally compact quantum groups~\cites{W2001,WZ2002}. Here we construct a family of \(q\)-deformations of the same group for complex deformation parameters~\(0<\Mod{q}<1\). Consider 
the pair of operators~\((v,n)\) satisfying the following conditions
\begin{equation}
 \label{eq:gen-E2}
 v\text{ is unitary},
 \quad
 n\text{ is normal},
 \quad
  vnv^{*}=qn, 
  \quad
  \Sp(n)=\Specn ,
\end{equation}
where~\(\Specn:=\big\{\lambda\in\C\colon |\lambda|\in |q|^{\Z}\big\}\cup \{0\}\). 
Let \(q=\Mod{q}\Ph{q}\) and \(n=\Mod{n}\Ph{n}\) be the polar decompositions 
of~\(q\) and~\(n\), respectively. The explicit meaning of the commutation relation \(vnv^{*}=qn\) is \(v\Ph{n}v^{*}=\Ph{q}\Ph{n}\) and~\(v\Mod{n}v^{*}=\Mod{q}\Mod{n}\). 

The spectral condition of~\(n\) implies that \(\Contvin(\Specn)\) is generated by 
the unbounded normal operator~\(n\) in the sense of~\cite{W1995}*{Definition 3.1}. 
The commutation relation~\(vnv^{*}=qn\) induces the following action~\(\alpha\colon\Z\to\textup{Aut}(\Contvin(\Specn))\): 
\begin{equation}
 \label{eq:Z-action}
 \alpha_{k}(f)(\lambda)=f(q^k\lambda), 
\quad\text{for all~\(k\in\Z\), \(f\in \Contvin(\Specn)\) and \(\lambda\in\Specn\).}
\end{equation}
Let~\(B=\Contvin(\Specn)\rtimes_{\alpha} \Z\) be the associated crossed product~\(\Cst\)\nb-algebra. 
By virtue of \cite{W1991b}*{Theorem 1.6} and the universal property of the \(\Cst\)\nb-algebra 
crossed products, the pair of operators \((v,n)\) in~\eqref{eq:gen-E2} generate the \(\Cst\)\nb-algebra~\(B\). In particular, \(v,n \notin B\) but these are affiliated with~\(B\), see~\cite{W1991b}*{Definition 1.1}.

For real \(0<q<1\), one has a well\nb-defined nondegenerate \Star{}homomorphism 
\(\Comult[B]\colon B\to \Mult(B\otimes B)\) satisfying
\begin{equation} 
 \label{eq:comult-E2}
  \Comult[B](v)=v\otimes v, 
  \qquad
  \Comult[B](n)=v\otimes n\dotplus n\otimes v^{*}.
\end{equation}
Here~\(\otimes\), \(\Mult(B)\) and \(\dotplus\) denote the minimal tensor product of~\(\Cst\)\nb-algebras, multiplier algebra of~\(B\) and the closure of 
the sum of the operators, respectively. Also, \(\Comult[B](n)\) is affiliated with~\(B\otimes B\). The pair~\(\Bialg{B}\) is 
Woronowicz's \(\qE\) group~\cites{W1991a,W1991b}. 

However, the passage from the real to the complex deformation parameter~\(q\) reveals two interesting features. Firstly, the \(\Cst\)\nb-algebra \(B\) for the deformation 
parameter~\(\Mod{q}\) is isomorphic to the one for the complex parameter~\(q\), see Proposition~\ref{Prop:isom-CstEq2}. Secondly, \(\Comult[B](n)\) 
and~\(\Comult[B](n)^{*}\) do not commute (even formally). In other words, \(\Bialg{B}\) fails to be a \(\Cst\)\nb-quantum group 
unless~\(q=\overline{q}\).

We address this issue by replacing the ordinary tensor product~\(\otimes\) by the twisted tensor product~\(\boxtimes_{\Rmat}\) from~\cite{MRW2016} associated to the bicharacter~\(\Rmat\colon\Z\times\Z\to\T\) defined by
\begin{equation}
 \label{eq:bichar}
 \Rmat(m,n)\defeq \zeta^{mn}, 
 \quad\text{for all~\(m,n\in\Z\) with \(\zeta\defeq\frac{q}{\overline{q}}=(\Ph{q})^{2}\).}
\end{equation}
For that matter, we view the circle group~\(\T\) as a quasitriangular quantum group with respect to~\(\Rmat\) as explained in Example~\ref{ex:T-quasitriag}.

Now consider the category~\(\Cstcat(\T)\) whose objects 
are \(\Cst\)\nb-algebras equipped with an action of~\(\T\) and morphisms are~\(\T\)\nb-equivariant nondegenerate \Star{}homomorphisms. Clearly, \(\Cont(\T)\) is an object 
of~\(\Cstcat(\T)\) with respect to the translation action of~\(\T\). Also, the universal property of~\(B\) ensures that~\((v,\lambda)\to v\) and \((n,\lambda)\to\lambda n\) for all~\(\lambda\in\T\) extends uniquely to an action of~\(\T\) on \(B\) (see also~\eqref{eq:T-act_Eq2}). The monoidal structure~\(\boxtimes_{\Rmat}\) on \(\Cstcat(\T)\) is defined as follows: \(C\boxtimes_{\Rmat}D\) 
is a \(\Cst\)\nb-algebra for all objects \((C,\delta_{C})\), \((D,\delta_{D})\) 
of \(\Cstcat(\T)\) with injective nondegenerate \Star{}homomorphisms 
\(j_{1}\colon C\to\Mult(C\boxtimes_{\Rmat}D)\) and \(j_{2}\colon D\to\Mult(C\boxtimes_{\Rmat}D)\) such that the \Star{}algebra 
generated by~\(\{j_{1}(c)j_{2}(d)\mid c\in C, d\in D\}\) is dense in~\(C\boxtimes_{\Rmat}D\). On the homogeneous elements~\(c_{k}\in C\) and~\(d_{l}\in D\) of degree~\(k\) and~\(l\), that is~\(\delta_{C}(c_{k})=c_{k}\otimes z^{k}\) and~\(\delta_{D}(d_{l})=d_{l}\otimes z^{l}\), the canonical embeddings \(j_{1},j_{2}\) commute up to~\(\zeta^{kl}\): \(j_{1}(c_{k})j_{2}(d_{l})=\zeta^{kl}j_{2}(d_{l})j_{1}(c_{k})\).

In particular, we consider the twofold twisted tensor product 
\(B\boxtimes_{\Rmat}B\) and define an analogue of 
\eqref{eq:comult-E2}  in~\(\Cstcat(\T)\) by
\begin{equation}
 \label{eq:comult-bE2}
 \Comult[B](v)\defeq j_{1}(v)j_{2}(v), 
 \qquad
 \Comult[B](n)\defeq j_{1}(v)j_{2}(n)\dotplus j_{1}(n)j_{2}(v^{*}).
\end{equation}
The primary goal of this article is to prove that~\(\qE=\Bialg{B}\) is a braided \(\Cst\)\nb-quantum group over the quasitriangular quantum group~\(\T\) with respect to~\(\Rmat\). This is essentially contained in Theorem~\ref{the:BrdqE2}. Subsequently, we construct 
an ordinary \(\Cst\)\nb-quantum group~\(\Qgrp{H}{C}\) with an idempotent quantum group homomorphism with image~\(\T\). In the Hopf\nb-algebraic context this process is known  as \emph{bosonisation} which was discovered by Radford~\cite{R1985} and extensively studied by Majid~\cites{M1994, M1999, M2000}. Therefore, \(\Qgrp{H}{C}\) is the analytic counter part of the bosonisation of~\(\qE\), which is a new example of (non\nb-compact) \(\Cst\)\nb-quantum group.

The double cover of \(\textup{E}(2)\) may be constructed by contracting~\(\textup{SU}(2)\). This is one of the well studied examples of the classic In\"on\"u\nb-Wigner group contraction~\cite{IW1953}.  In the purely algebraic setting, the quantum analogue of the contraction procedure was investigated by Celeghini, Giachetti, Sorace and Tarlini~\cites{CGST1990, CGST1991, CGST1991a} and applied to construct new examples of quantum groups. Their work motivated Woronowicz~\cite{W1992} to study the contraction procedure for \(\Cst\)\nb-quantum groups. In this article, it was shown that for real~\(0<q<1\), the contraction of the compact quantum groups \(\qSU\) respect to the (closed quantum) subgroup \(\T\) coincides with~\(\qE\) groups.  

\medskip 

Deformations of~\(\textup{SU}(2)\) group for complex~\(q\) satisfying~\(0<\Mod{q}<1\) was considered in~\cite{KMRW2016}. They are braided compact quantum groups over the quasitriangular quantum group~\(\T\) with respect to~\(\Rmat\) and \(\textup{U}_{q}(2)\) groups are the associated bosonisation. 
 Now \(\T\) is a  \emph{closed quantum subgroup} of braided~\(\qSU\) groups~\cite{S2020}*{Proposition 4.1}  in the sense of Woronowicz~\cite{DKSS2012}*{Definition 3.2}. Similarly, we observe that \(\T\) is also a closed quantum subgroup of braided~\(\qE\) in Remark~\ref{rem:closed-qnt}. Then we prove a braided analogue of the contraction procedure between~\(\qSU\) and~\(\qE\) in Theorem~\ref{the:contraction}. More precisely, the \(\T\)\nb-equivariant contraction of braided~\(\qSU\) groups with respect to~\(\T\) are isomorphic to braided~\(\qE\) groups.  Consequently, we show that~\(\G[H]\) can be constructed by contracting \(\textup{U}_{q}(2)\) with respect to the same subgroup~\(\T\).
 
\medskip 

Let us briefly outline the structure of this article. We have gathered all the necessary preliminaries in Section~\ref{sec:prelim}. In general, the category of Yetter\nb-Drinfeld represenations of a~\(\Cst\)\nb-quantum group~\(\G\), denoted by~\(\YDCorepcat(\G)\), is a braided monoidal category, and additionally the braiding isomorphisms are unitary operators. In short, we call such a category as a \emph{unitarily braided monoidal category}. Motivated by~\cite{W1996}, a general theory of braided \(\Cst\)\nb-quantum groups~\cite{R2016} is developed using manageable braided multiplicative unitaries~\cites{R2013, MRW2017} in~\(\YDCorepcat(\G)\), where~\(\G\) is a regular \(\Cst\)\nb-quantum group. 

Suppose \(\Qgrp{G}{A}\) is a regular quasitriangular \(\Cst\)\nb-quantum group with respect to a unitary \(\Rmattxt\)\nb-matrix \(\Rmat\in\U(\hat{A}\otimes \hat{A})\), where~\(\DuQgrp{G}{A}\) is the dual of~\(\G\). Then the representation category~\(\Corepcat(\G)\) of~\(\G\) is a unitarily braided monoidal category~\cite{MRW2016}*{Section 3}. Therefore, it is natural to work with manageable braided multiplicative unitaries in \(\Corepcat(\G)\) instead of~\(\YDCorepcat(\G)\) whenever \(\G\) is a quasitriangular quantum group and construct braided \(\Cst\)\nb-quantum groups out of them. This is done in Section~\ref{sec:Bcst-qsi} using the results presented in the Appendix~\ref{sec:YD-rep} dealing with the Yetter\nb-Drinfeld representations of~\(\G\). In particular, the construction of braided \(\Cst\)\nb-quantum groups over~\(\G\) is presented in Theorem~\ref{the:BQgrp-quasitriag}. 

From Section~\ref{sec:bcstqgpT} onwards we fix~\(\G\) to be the circle group~\(\T\) 
(viewed as a quantum group) and the \(\Rmattxt\)\nb-matrix~\(\Rmat\) defined in~\eqref{eq:bichar}. Let \(\Hils[L]\) be a Hilbert space equipped with an orthonormal basis \(\{e_{ij}\}_{i,j\in\Z}\). 
A concrete realisation of~\eqref{eq:gen-E2} on~\(\Hils[L]\) is given by
\begin{equation}
 \label{eq:con-gen-E2}
  ve_{i,j}\defeq e_{i-1,j},
  \quad
  ne_{i,j}\defeq q^{i}e_{i,j+1}.
\end{equation}
In fact, any Hilbert space realisation of~\eqref{eq:gen-E2} is either one dimensional 
or infinite dimensional, and the direct integral of all infinite dimensional irreducible representations of~\eqref{eq:gen-E2} is unitarily equivalent to~\eqref{eq:con-gen-E2}, see~\cite{W1991b}*{Section 3 (B)}. 
Therefore, \eqref{eq:con-gen-E2} defines a faithful nondegenerate \Star{}representation 
of~\(B\) on~\(\Hils[L]\).

We identify~\(\Hils[L]\cong\ell^{2}(\Z)\otimes\ell^{2}(\Z)\) and use the canonical representation  of~\(\T\) on the second tensor factor. This makes~\(\Hils[L]\) an object of~\(\Corepcat(\T)\) and~\(\Rmat\) induces the unitary braiding~\(\Braiding{}{}\in\U(\Hils[L]\otimes\Hils[L])\). On standard basis elements~\(\{e_{i,j}\otimes e_{k,l}\}_{i,j,k,l\in\Z}\) of~\(\Hils[L]\otimes\Hils[L]\) the action of~\(\Braiding{}{}\) is given by \(e_{i,j}\otimes e_{k,l}\to \zeta^{-jl} e_{k,l}\otimes e_{i,j}\). Starting with these data, we construct a manageable braided multiplicative unitary~\(\BrMultunit\in\U(\Hils[L]\otimes\Hils[L])\) in the category~\(\Corepcat(\T)\), see Theorem~\ref{theorem:main}. 

The main results of this article are presented in Section~\ref{sec:Eq2C*-algebra}. We apply Theorem~\ref{the:BQgrp-quasitriag} to~\(\BrMultunit\) and construct~\(\qE=\Bialg{B}\) as a braided group over~\(\T\) from it in Theorem~\ref{the:BrdqE2}. 
The associated bosonisation~\(\Qgrp{H}{C}\) is described in Theorem~\ref{the:boson}. 
In the final section~\ref{sec:contraction}, we present the contraction procedure 
between~\(\G[H]\) and~\(\textup{U}_{q}(2)\) group. 

\section{Preliminaries}
\label{sec:prelim}
All Hilbert spaces and \(\Cst\)\nb-algebras (which are not explicitly multiplier algebras) are assumed to be separable. For a \(\Cst\)\nb-algebra~\(A\), 
let \(\Mult(A)\) be its multiplier algebra and let \(\U(A)\) be the group of unitary multipliers of~\(A\).  The unit of~\(\Mult(A)\) is denoted by 
\(1_{A}\). For two norm closed subsets \(X\) and~\(Y\) of a 
\(\Cst\)\nb-algebra~\(A\) and~\(T\in\Mult(A)\), set 
\[
  XTY\defeq\{xTy: x\in X, y\in Y\}^\CLS .
\]

Let~\(\Cstcat\) be the category of \(\Cst\)\nb-algebras with
nondegenerate \Star{}homomorphisms \(\varphi\colon A\to\Mult(B)\) as
morphisms \(A\to B\); let \(\Mor(A,B)\) denote the set of morphisms. 
We use the same symbol for an element of~\(\Mor(A,B)\) and its 
unique unital extension from~\(\Mult(A)\) to~\(\Mult(B)\).

Let~\(\Hils\) be a Hilbert space.  A \emph{representation} of a
\(\Cst\)\nb-algebra~\(A\) is a nondegenerate
\Star{}homomorphism \(\pi\colon A\to\Bound(\Hils)\).  Since
\(\Bound(\Hils)=\Mult(\Comp(\Hils))\) and the nondegeneracy
conditions \(\pi(A) \Comp(\Hils)=\Comp(\Hils)\) and
\(\pi(A) \Hils=\Hils\) are equivalent; hence \(\pi\in\Mor(A,\Comp(\Hils))\). 
The identity operator on~\(\Hils\) is denoted by~\(\Id_{\Hils}\). Whereas 
the the unit element of~\(\Mult(\Comp(\Hils))=\Bound(\Hils)\) 
is denoted by~\(1_{\Hils}\).

We write~\(\Flip\) for the tensor flip \(\Hils\otimes\Hils[K]\to
\Hils[K]\otimes\Hils\), \(x\otimes y\mapsto y\otimes x\), for two
Hilbert spaces \(\Hils\) and~\(\Hils[K]\).  We write~\(\flip\) for the
tensor flip isomorphism \(A\otimes B\to B\otimes A\) for two
\(\Cst\)\nb-algebras \(A\) and~\(B\).

\subsection{C*-quantum groups, actions and representations}
\label{sec:multunit_quantum_groups} 
 Let~\(\Hils\) be a Hilbert space. An element  
 \(\Multunit\in\U(\Hils\otimes\Hils)\) is called a 
 \emph{multiplicative unitary}~\cite{BS1993}*{Definition 1.1} if it satisfies the 
 \emph{pentagon equation}
  \begin{equation}
    \label{eq:pentagon}
    \Multunit_{23}\Multunit_{12}
    = \Multunit_{12}\Multunit_{13}\Multunit_{23}
    \qquad
    \text{in \(\U(\Hils\otimes\Hils\otimes\Hils).\)}
  \end{equation}
Furthermore, ~\(\Multunit\in\U(\Hils\otimes\Hils)\) 
is said to be \emph{manageable} if there is a strictly positive
operator~\(Q\) on~\(\Hils\)
and a unitary \(\widetilde{\Multunit}\in \U(\conj{\Hils}\otimes\Hils)\)
satisfying
\begin{equation}
  \label{eq:Multunit_manageable}
   \Multunit (Q\otimes Q)\Multunit^{*} = Q\otimes Q,
   \quad
  \langle x\otimes u\mid \Multunit\mid z\otimes y\rangle
  = \langle\conj{z}\otimes Q u\mid \widetilde{\Multunit} \mid
  \conj{x}\otimes Q^{-1}y\rangle
\end{equation}
 for all \(x,z\in\Hils\), \(u\in\dom(Q)\) and \(y\in\dom(Q^{-1})\)
(see \cite{W1996}*{Definition 1.2}).
Here~\(\conj{\Hils}\) is the conjugate Hilbert space, and an operator 
is \emph{strictly positive} if it is positive and self-adjoint with trivial kernel.
The condition \(\Multunit (Q\otimes Q)\Multunit^{*} = Q\otimes Q\)
means that the unitary~\(\Multunit\)
commutes with the unbounded operator~\(Q\otimes Q\).

A \emph{\(\Cst\)\nb-quantum group}~\(\G=\Bialg{A}\) consists of a \(\Cst\)\nb-algebra 
\(A\) and an element~\(\Comult[A]\in\Mor(A,A\otimes A)\) 
constructed from a  manageable multiplicative 
unitary \(\Multunit\)~\cite{W1996}*{Theorem 1.5}.  Let~\(\DuQgrp{G}{A}\) 
be the dual \(\Cst\)\nb-quantum group and let~\(\multunit\in\U(\hat{A}\otimes A)\) 
be the reduced bicharacter. In particular, 
\begin{equation}
  \label{eq:W-slices}
    \begin{aligned}
               A &\defeq\{(\omega\otimes\Id_{A})\multunit\mid \omega\in\hat{A}'\}^\CLS\subset\Bound(\Hils),\\
        \hat{A} &\defeq\{(\Id_{\hat{A}}\otimes\omega)\multunit\mid \omega\in A'\}^\CLS\subset\Bound(\Hils).
    \end{aligned}
\end{equation}
Moreover, \(\Comult[A]\) and~\(\DuComult[A]\) are characterised by 
\begin{equation}
  \label{eq:redbichar}
   \begin{aligned}
    (\Id_{\hat{A}}\otimes\Comult[A])\multunit &=\multunit_{12}\multunit_{13} \qquad\text{in~\(\U(\hat{A}\otimes A\otimes A)\),}\\
   (\DuComult[A]\otimes\Id_{A})\multunit      &=\multunit_{23}\multunit_{13} \qquad\text{in~\(\U(\hat{A}\otimes\hat{A}\otimes A)\).}
   \end{aligned}
\end{equation} 
We reserve the phrase ``quantum groups'' for \(\Cst\)\nb-quantum groups. 
An important concept in the theory of quantum groups is the concept of \emph{regularity} introduced by Baaj and Skandalis in~\cite{BS1993}. 
A quantum group~\(\Qgrp{G}{A}\) is said to be regular if ~\((\hat{A}\otimes 1_{A})\multunit (1_{\hat{A}}\otimes A)=\hat{A}\otimes A\). 
Dual of a regular quantum group is again regular.
 A \emph{\textup(right\textup) action} of~\(\Qgrp{G}{A}\) on a
  \(\Cst\)\nb-algebra~\(C\) is an injective morphism \(\delta\colon C\to C\otimes
  A\) with the following properties:
  \begin{enumerate}
    \item \(\delta\) is a comodule structure, that is,
    \begin{equation}
      \label{eq:right_action}
      (\Id_C\otimes\Comult[A])\circ\delta
      = (\delta\otimes\Id_A)\circ\delta;
    \end{equation}
  \item \(\delta\) satisfies the \emph{Podle\'s condition}:
    \begin{equation}
      \label{eq:Podles_cond}
      \delta(C)(1_C\otimes A)=C\otimes A.
    \end{equation}
 \end{enumerate}
  We call \((C,\delta)\) a \emph{\(\G\)\nb-\(\Cst\)\nb-algebra}.  We shall
  drop~\(\delta\) from our notation whenever it is clear from the context.
  A morphism \(f\colon C\to D\) between two
  \(\G\)\nb-\(\Cst\)\nb-algebras \((C,\delta_{C})\) and \((D,\delta_{D})\) is
  \emph{\(\G\)\nb-\hspace{0pt}equivariant} if \(\delta_{D}\circ f =
  (f\otimes\Id_A)\circ\delta_{C}\).  Let \(\Mor^{\G}(C,D)\) be the set of
  \(\G\)\nb-equivariant morphisms from~\(C\) to~\(D\).  Let
  \(\Cstcat(\G)\) be the category with \(\G\)\nb-\(\Cst\)-algebras as
  objects and \(\G\)\nb-equivariant morphisms as arrows.

  A (right) \emph{representation} of~\(\G\) on a \(\Cst\)\nb-algebra~\(D\) 
  is an element \(\corep{U}\in\U(D\otimes A)\)
  with
  \begin{equation}
    \label{eq:corep_cond}
    (\Id_{D}\otimes\Comult[A])\corep{U} =\corep{U}_{12}\corep{U}_{13}
    \qquad\text{in }\U(D\otimes A\otimes A).
  \end{equation}
In particular, if \(D=\Comp(\Hils[L])\) for some Hilbert space~\(\Hils[L]\), then \(\corep{U}\) is said to be a (right) 
representation of~\(\G\) on~\(\Hils[L]\).

The \emph{tensor product of representations}~\(\corep{U}^{i}\in\U(\Comp(\Hils[L]_{i})\otimes A)\) for~\(i=1,2\) 
is defined by~\(\corep{U}^{1}_{13}\corep{U}^{2}_{23}\in\U(\Comp(\Hils[L]_1\otimes\Hils[L]_2)\otimes A)\). It is  
denoted by \(\corep{U}^{1}\tenscorep \corep{U}^{2}\). An element \(t\in\Bound(\Hils[L]_1,\Hils[L]_2)\) is called
an \emph{intertwiner} if \((t\otimes 1_A)\corep{U}^1 =\corep{U}^2(t\otimes 1_A)\).  The set of all intertwiners
between \(\corep{U}^1\) and~\(\corep{U}^2\) is denoted by \(\Hom^{\G}(\corep{U}^1, \corep{U}^2)\). A routine computation 
shows that \(\tenscorep\) is associative; and the trivial \(1\)\nb-dimensional representation is a tensor unit. 
This gives representations a structure of \(\textup{W}^*\)\nb-category, which we denote by~\(\Corepcat(\G)\); 
see \cite{SW2007}*{Section 3.1--2} for more details. Objects of~\(\Corepcat(\G)\) are pairs~\((\Hils[L],\corep{U})\) 
consisting of a Hilbert space~\(\Hils[L]\) and a representation~\(\corep{U}\) of~\(\G\) on~\(\Hils[L]\).

  A \emph{covariant representation} of \((C,\delta,\G)\) on a Hilbert
  space~\(\Hils[L]\) is a pair~\((\corep{U},\varphi)\) consisting of a 
  representation \(\corep{U}\in\U(\Comp(\Hils[L])\otimes A)\) of~\(\G\) and 
  a representation \(\varphi\colon C\to\Bound(\Hils[L])\) of~\(C\) 
  that satisfy the covariance condition
  \begin{equation}
    \label{eq:covariant_corep}
    (\varphi\otimes\Id_A)\delta(c) =
    \corep{U}(\varphi(c)\otimes 1_A)\corep{U}^*
    \qquad\text{in }\U(\Comp(\Hils[L])\otimes A)
  \end{equation}
  for all \(c\in C\).  A covariant representation is called
  \emph{faithful} if~\(\varphi\) is faithful. Faithful covariant 
  representations always exists, 
  see for instance~\cite{MRW2014}*{Example~4.4}.
  
  \begin{example}
\label{ex:lcq-as-qg}
Let~\(G\) be a locally compact group. Let~\(\Hils\defeq L^{2}(G)\) with respect to the right Haar measure on~\(G\). 
A routine computation shows that the operator \((\Multunit \xi)(g_{1},g_{2})\defeq \xi (g_{1}g_{2},g_{2})\) 
for all~\(\xi\in L^{2}(G\times G)\) is a manageable multiplicative unitary with~\(Q=\Id_{\Hils}\) and~\(\widetilde{\Multunit}
=\Multunit^{*}\) and generates the quantum group~\(\Qgrp{G}{\Contvin(G)}\) where 
\((\Comult[\Contvin(G)]f)(g_{1},g_{2})\defeq f(g_1 g_2)\) for all~\(f\in\Contvin(G)\). In this way, \(G\) is viewed as a quantum group. 
Moreover, the category \(\Cstcat(\G)\) is equivalent to the category \(\Cstcat(G)\) with 
\(G\)\nb-\(\Cst\)-algebras as objects and \(G\)\nb-equivariant morphisms as arrows. Similarly,~\(\Corepcat(\G)\) is also equivalent to the representation category~\(\Corepcat(G)\) of~\(G\). Let~\(\mu\) be the right regular representation of~\(G\) on~\(\Hils\). The dual of \(\G\) is~\(\DuG=\Bialg{\Cred(G)}\), where~\(\Comult[\Cred(G)](\mu_{g})\defeq \mu_{g}\otimes\mu_{g}\). 
Also, \(\G\) and~\(\DuG\) are regular quantum groups, and in addition, \(\DuG\) coincides with~\(\hat{G}\) as quantum group 
whenever \(G\) is Abelian. 

\end{example}
  
  For simplicity, whenever a locally compact group~\(G\) is viewed as a quantum group, 
  we use the same notation~\(G\) to denote corresponding quantum 
  group~\(\Qgrp{G}{\Contvin(G)}\). 
      
  \subsection{Quasitriangular quantum groups}
  \label{sec:Quasitriag}
   Let~\(\Qgrp{G}{A}\) be a~quantum group 
 and let~\(\DuQgrp{G}{A}\) be the dual. An element 
 \(\Rmat\in\U(\hat{A}\otimes\hat{A})\) is 
 said to be an~\(\Rmattxt\)\nb-matrix on~\(\DuG\) if it is a bicharacter: 
  \[
    (\Id_{\hat{A}}\otimes\DuComult[A])\Rmat=\Rmat_{12}\Rmat_{13}, 
    \quad 
  (\DuComult[A]\otimes\Id_{\hat{A}})\Rmat =\Rmat_{23}\Rmat_{13},
  \quad\text{in \(\U(\hat{A}\otimes\hat{A}\otimes\hat{A})\);}
  \]
  and satisfies the~\(\Rmattxt\)\nb-matrix condition:
 \begin{equation}
  \label{eq:R-mat}
  \Rmat(\flip\DuComult[A](\hat{a}))\Rmat^{*}=\DuComult[A](\hat{a}) 
  \qquad\text{for all~\(\hat{a}\in\hat{A}\).}
 \end{equation}
 A \emph{quasitriangular quantum group} is a quantum 
 group~\(\Qgrp{G}{A}\) with an \(\Rmattxt\)\nb-matrix \(\Rmat\in\U(\hat{A}\otimes\hat{A})\), 
 see~\cite{MRW2016}*{Definition 3.1}. 
 
  \begin{example}
  \label{ex:T-quasitriag}
   Every continuous bicharacter~\(\Z\times\Z\to\T\) satisfies the~\(\Rmattxt\)\nb-matrix 
  condition~\eqref{eq:R-mat} because \(\hat{A}=\Contvin(\Z)\) is a commutative 
  \(\Cst\)\nb-algebra. Thus~\(\T\) is quasitringular with respect to any 
  bicharacter on~\(\Z\times\Z\). In particular, 
  for~\(q\in\C\setminus\{0\}\) the bicharacter~\(\Rmat\colon\Z\times\Z\to\T\) 
  in~\eqref{eq:bichar} is an \(\Rmattxt\)\nb-matrix on~\(\Z\).
 \end{example}

Throughout this subsection we assume~\(\Qgrp{G}{A}\) is a quasitriangular quantum group 
 with an~\(\Rmattxt\)\nb-matrix~\(\Rmat\in\U(\hat{A}\otimes\hat{A})\). Then the 
 categories \(\Corepcat(\G)\) and~\(\Cstcat(\G)\) are of particular interest. 
More precisely, \cite{MRW2016}*{Proposition 3.2 \& Theorem 4.3} 
show that \(\Corepcat(\G)\) is a unitarily braided monoidal category and~\(\Cstcat(\G)\) is a monoidal category, whenever~\(\G\) is a quasitriangular quantum group. We recall the explicit construction of the unitary braiding on~\(\Corepcat(\G)\) and the monoidal product~\(\boxtimes_{\Rmat}\) on~\(\Cstcat(\G)\). The latter construction was motivated by~\cite{NV2010}.

Let~\((\alpha,\beta)\) be an~\(\Rmat\)\nb-Heisenberg pair acting on a Hilbert space~\(\Hils[L]\). 
More explicitly,~\(\alpha ,\beta\in\Mor(A,\Comp(\Hils[L]))\) and 
satisfy the commutation relation~\(\multunit_{1\alpha}
\multunit_{2\beta}=\multunit_{2\beta}\multunit_{1\alpha}\Rmat_{12}\) in 
\(\U(\hat{A}\otimes\hat{A}\otimes\Comp(\Hils[L]))\), where 
\(\multunit_{1\alpha}\defeq ((\Id_{\hat{A}}\otimes\alpha)\multunit)_{13}\) and 
\(\multunit_{1\beta}\defeq ((\Id_{\hat{A}}\otimes\beta)\multunit)_{23}\) 
in~\(\U(\hat{A}\otimes\hat{A}\otimes\Comp(\Hils[L]))\), 
see~\cite{MRW2014}*{Definition 3.1}. Existence of \(\Rmat\)\nb-Heisenberg pairs 
is guaranteed by~\cite{MRW2014}*{Lemma 3.8}. 

Suppose~\((\Hils[L]_{i},\corep{U}^{i})\) 
are objects in~\(\Corepcat(\G)\) for~\(i=1,2\). The proof of~\cite{MRW2014}*{Theorem 4.1} 
shows that there is a unique solution~\(Z\in\U(\Hils[L]_{1}\otimes\Hils[L]_{2})\), independent of the choice of the 
\(\Rmat\)\nb-Heisenberg pair~\((\alpha,\beta)\), of the following equation
\begin{equation}
 \label{eq:Z}
  \corep{U}^{1}_{1\alpha}\corep{U}^{2}_{2\beta}Z_{12}=\corep{U}^{2}_{2\beta}\corep{U}^{1}_{1\alpha}
  \qquad\text{in~\(\U(\Hils[L]_{1}\otimes\Hils[L]_{2}\otimes\Hils[L])\).}
\end{equation}
The braiding unitary is defined by~\(\Braiding{\Hils[L]_{1}}{\Hils[L]_{2}}\defeq Z\circ\Flip\in\Hom^{\G}
(\corep{U}^{2}\tenscorep\corep{U}^{1},\corep{U}^{1}\tenscorep\corep{U}^{2})\), 
see~\cite{MRW2016}*{Equation 3.2}. 

Suppose~\((C_{i},\delta_{i})\) are objects in~\(\Cstcat(\G)\) and 
\((\corep{U}^{i},\varphi_{i})\) are faithful covariant representations of 
\((C_{i},\delta_{i},\G)\) on~\(\Hils[L]_{i}\) for~\(i=1,2\). Clearly, 
\((\Hils[L]_{i},\corep{U}^{i})\) are objects in~\(\Corepcat(\G)\) for~\(i=1,2\) 
and let~\(\Braiding{\Hils[L]_{1}}{\Hils[L]_{2}}\) be the unitary braiding. 
Define \(j_{i}\in\Mor(C_{i},\Comp(\Hils[L]_{1}\otimes\Hils[L]_{2}))\) by 
\begin{equation}
 \label{eq:def_j1_j2_Rmat}
  j_{1}(c_1)\defeq \varphi_{1}(c_1)\otimes 1_{\Hils[L]_{2}}, 
  \qquad 
  j_{2}(c_2)\defeq \Braiding{\Hils[L]_{1}}{\Hils[L]_{2}}(\varphi_{2}(c_{2})\otimes 1_{\Hils[L]_{1}})\Braiding{\Hils[L]_{1}}{\Hils[L]_{2}}^{*},
\end{equation}
for all~\(c_{i}\in C_{i}\) and~\(i=1,2\). Then~\(C_{1}\boxtimes_{\Rmat} C_{2}\defeq j_{1}(C_1)j_{2}(C_2)\subseteq\Bound(\Hils[L]_{1}\otimes\Hils[L]_{2})\) is a~\(\Cst\)\nb-algebra~\cite{MRW2014}*{Theorem 4.6} and~\(\delta_{1}\bowtie\delta_{2}(j_{1}(c_1)j_{2}(c_2))\defeq (j_{1}\otimes\Id_{A})\delta_{1}(c_1)(j_{2}\otimes\Id_{A})
\delta_{2}(c_2)\) defines an action of~\(\G\) on~\(C_1\boxtimes_{\Rmat}C_2\)~\cite{MRW2016}*{Proposition 4.1}. 
Let~\((C'_{i},\delta'_{i})\) be objects in~\(\Cstcat(\G)\) and~\(f_{i}\in\Mor^{\G}(C_{i},
C'_{i})\) for~\(i=1,2\). The monoidal product~\(f_{1}\boxtimes_{\Rmat}f_{2}
\in\Mor(C_{1}\boxtimes_{\Rmat}C_{2},C'_{1}\boxtimes_{\Rmat}C'_{2})\) in~\(\Cstcat(\G)\) is defined by 
\[
  (f_{1}\boxtimes_{\Rmat}f_{2})\circ  j_{1}=j'_{1}\circ f_{1}, 
  \qquad
  (f_{1}\boxtimes_{\Rmat}f_{2})\circ j_{2} = j'_{2}\circ f_{2}, 
\]
where~\(j'_{i}\in\Mor(C'_{i},C'_{1}\boxtimes_{\Rmat}C'_{2})\) are the canonical morphisms for~\(i=1,2\).

\section{Braided C*-quantum groups over quasitriangular quantum groups}
 \label{sec:Bcst-qsi}
Let~\(\Qgrp{G}{A}\) be a quasitriangular quantum group with an~\(\Rmattxt\)\nb-matrix 
\(\Rmat\in\U(\hat{A}\otimes\hat{A})\).
\begin{definition}
 \label{def:Brmult-T}
 Let \((\Hils[L],\corep{U})\) be an object of \(\Corepcat(\G)\). Let 
 \(\Braiding{\Hils[L]}{\Hils[L]}=Z\circ\Flip\in\Hom^{\G}(\corep{U}\tenscorep\corep{U},\corep{U}\tenscorep\corep{U})\) be the 
 braiding unitary. A unitary~\(\BrMultunit\in\U(\Hils[L]\otimes\Hils[L])\) is said to be a \emph{braided multiplicative unitary
 over~\(\G\) relative to~\((\corep{U},\Rmat)\)} if 
 \begin{enumerate}
  \item \(\BrMultunit\in\Hom^{\G}(\corep{U}\tenscorep\corep{U},\corep{U}\tenscorep\corep{U})\): 
  \begin{equation}
   \label{eq:U-inv}
    \BrMultunit_{12}\corep{U}_{13}\corep{U}_{23}=\corep{U}_{13}\corep{U}_{23}\BrMultunit_{12} 
    \qquad\text{in~\(\U(\Comp(\Hils[L]\otimes\Hils[L])\otimes A)\);}
  \end{equation}
  \item \(\BrMultunit\) satisfies the braided pentagon equation:
  \begin{equation}
   \label{eq:Braided-pentagon}
    \BrMultunit_{23}\BrMultunit_{12}=\BrMultunit_{12}(\Braiding{\Hils[L]}{\Hils[L]}_{23})\BrMultunit_{12}(\Braiding{\Hils[L]}{\Hils[L]}_{23})^{*}\BrMultunit_{23}
    \qquad\text{in~\(\U(\Hils[L]\otimes\Hils[L]\otimes\Hils[L])\).}
  \end{equation}
 \end{enumerate} 
\end{definition}

Recall the bounded coinverse~\(R_{A}\) of~\(\G\), which is an involutive normal antiautomorphism of~\(A\), as in~\cite{W1996}*{Theorem 1.5 (4)}.  
Then~\(\corep{U}^{c}\defeq \corep{U}^{\transpose\otimes R_{A}}\in
\U(\Comp(\conj{\Hils[L]})\otimes A)\) is the contragradient representation of~\(\corep{U}\), where 
\(\transpose\) is the transposition defined by \(T^{\transpose}\bar{x}\defeq \overline{T^{*}x}\) for 
all~\(T\in\Bound(\Hils[L])\) (see~\cite{SW2007}*{Definition 11}). There is a unique 
element~\(\widetilde{Z}\in\U(\conj{\Hils[L]}\otimes\Hils[L])\) satisfying~\eqref{eq:Z} for the 
representations~\((\corep{U}^{c},\corep{U})\).

Suppose~\(\G\) is constructed from a manageable multiplicative unitary 
\(\Multunit\in\U(\Hils\otimes\Hils)\) and~\(Q\) be the strictly positive operator 
acting on~\(\Hils\) appearing in~\eqref{eq:Multunit_manageable}. Let~\(\pi\in\Mor(A,\Comp(\Hils))\) 
be the embedding in~\eqref{eq:W-slices} and let 
\(\Corep{U}\defeq(\Id_{\Comp(\Hils[L])}\otimes\pi)\corep{U}\in\U(\Hils[L]\otimes\Hils)\).
\begin{definition}
\label{definition:manageability}
A braided multiplicative unitary \(\BrMultunit\in\U(\Hils[L]\otimes\Hils[L])\) over~\(\G\) relative
 to~\((\Corep{U},\Rmat)\) is said to be \emph{manageable} if there is a strictly positive operator 
 \(Q_{\Hils[L]}\) on~\(\Hils[L]\) and a unitary~\(\widetilde{\BrMultunit}\in\U(\conj{\Hils[L]}\otimes\Hils[L])\) such that 
\[
      \BrMultunit (Q_{\Hils[L]}\otimes Q_{\Hils[L]})\BrMultunit^{*} 
   = Q_{\Hils[L]}\otimes Q_{\Hils[L]}, 
   \qquad 
      \Corep{U}(Q_{\Hils[L]}\otimes Q)\Corep{U}^{*}
   = Q_{\Hils[L]}\otimes Q,
 \]
 and
 \begin{equation}
  \label{eq:Br-mang}
   \langle x\otimes u \mid Z^{*}\BrMultunit\mid y\otimes v\rangle 
  =\langle\conj{y}\otimes Q_{\Hils[L]}u\mid \widetilde{\BrMultunit}\widetilde{Z}^{*}\mid \conj{x}\otimes Q_{\Hils[L]}^{-1}v\rangle
 \end{equation}  
  for all~\(x,y\in\Hils[L]\), \(u\in\dom(Q_{\Hils[L]})\), \(v\in\dom(Q_{\Hils[L]}^{-1})\). 
\end{definition}

The main result of this section is the construction of 
braided \(\Cst\)\nb-quantum groups from manageable braided multiplicative 
unitaries in~\(\Corepcat(\G)\).

\begin{theorem}
 \label{the:BQgrp-quasitriag}
 Let~\(\BrMultunit\in\U(\Hils[L]\otimes\Hils[L])\) be a manageable braided multiplicative unitary 
 over a regular quantum group~\(\Qgrp{G}{A}\) relative to~\((\Corep{U},\Rmat)\). Let 
 \begin{equation}
  \label{eq:def-B1904}
   B\defeq\{(\omega\otimes\Id_{\Comp(\Hils[L])})\BrMultunit\mid\omega\in\Bound(\Hils[L])_{*}\}^\CLS\subset\Bound(\Hils[L]).
 \end{equation}
 Then
 \begin{enumerate}
  \item \(B\) is a nondegenerate, separable \(\Cst\)\nb-subalgebra of~\(\Bound(\Hils[L])\)\textup{;}
  \item Define~\(\beta(b)\defeq \corep{U}(b\otimes 1_{A})\corep{U}^{*}\) for all~\(b\in B\). Then 
  \(\beta\in\Mor(B,B\otimes A)\) and~\((B,\beta)\) is an object of~\(\Cstcat(\G)\)\textup{;}
  \item \(\BrMultunit\in\U(\Comp(\Hils[L])\otimes B)\)\textup{;}
 \suspend{enumerate}
Consider the twisted tensor product~\(B\boxtimes_{\Rmat}B\). 
  Suppose~\(j_{1},j_{2}\in\Mor^{\G}(B,B\boxtimes_{\Rmat}B)\) are the canonical elements. 
 \resume{enumerate}  
  \item There exists a unique~\(\Comult[B]\in\Mor^{\G}(B,B\boxtimes_{\Rmat}B)\) characterised 
  by 
  \begin{equation}
   \label{eq:ComulB-R}
      (\Id_{\Comp(\Hils[L])}\otimes\Comult[B])\BrMultunit=\bigl((\Id_{\Comp(\Hils[L])}\otimes j_{1})\BrMultunit\bigr) 
      \bigl((\Id_{\Comp(\Hils[L])}\otimes j_{2})\BrMultunit\bigr).
   \end{equation}
   Moreover, \(\Comult[B]\) is coassociative: \((\Comult[B]\boxtimes_{R}\Id_{B})\circ\Comult[B]=(\Id_{B}\boxtimes_{\Rmat}\Comult[B])\circ\Comult[B]\)
   and satisfies the cancellation conditions: \(\Comult[B](B)j_{1}(B)=B\boxtimes_{\Rmat}B=\Comult[B](B)j_{2}(B)\).
 \end{enumerate} 
 The pair~\(\Bialg{B}\) is said to be the \emph{braided \(\Cst\)\nb-quantum group} \textup{(}over~\(\G\)\textup{)} generated 
by the braided multiplicative unitary~\(\BrMultunit\).
\end{theorem}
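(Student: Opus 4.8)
The plan is to reduce to the Yetter--Drinfeld framework, where the general theory of manageable braided multiplicative unitaries \cites{R2013,R2016,MRW2017} already yields a braided \(\Cst\)\nb-quantum group, and then to rephrase the output in the language of \(\Corepcat(\G)\). Since \(\G\) is quasitriangular, the \(\Rmattxt\)\nb-matrix turns every object of \(\Corepcat(\G)\) into a Yetter--Drinfeld representation, giving a unitarily braided monoidal functor \(\Corepcat(\G)\to\YDCorepcat(\G)\) that sends the braiding \(Z\circ\Flip\) to the canonical Yetter--Drinfeld braiding and the twisted tensor product \(\boxtimes_{\Rmat}\) to the Yetter--Drinfeld monoidal product; this is the content assembled in Appendix~\ref{sec:YD-rep}. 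Under this functor the braided pentagon \eqref{eq:Braided-pentagon}, the invariance \eqref{eq:U-inv}, and the manageability data \((Q_{\Hils[L]},\widetilde{\BrMultunit})\) of \eqref{eq:Br-mang} become precisely those of a manageable braided multiplicative unitary in \(\YDCorepcat(\G)\). The first task is therefore to verify this translation with care---matching the two braidings and the two notions of manageability---after which assertions (1)--(4) are read off from the main construction of \cite{R2016}.

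For a self\nb-contained verification one proves (1)--(4) by the braided analogues of Woronowicz's arguments \cite{W1996}. For (1), I would show that the linear span of first\nb-leg slices is closed under multiplication and involution: rewriting a product \(\bigl((\omega_{1}\otimes\Id)\BrMultunit\bigr)\bigl((\omega_{2}\otimes\Id)\BrMultunit\bigr)\) by means of the braided pentagon \eqref{eq:Braided-pentagon}---carrying the braiding correction along---re\nb-expresses it as a single slice, while manageability through \(\widetilde{\BrMultunit}\) in \eqref{eq:Br-mang} identifies \(\BrMultunit^{*}\) with slices too, giving \(B^{*}=B\); separability and nondegeneracy then follow from separability of \(\Bound(\Hils[L])_{*}\) and fullness of \(\BrMultunit\). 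For (2), the map \(\beta(b)=\corep{U}(b\otimes 1_{A})\corep{U}^{*}\) is the adjoint action attached to the representation \(\corep{U}\), so the comodule identity \eqref{eq:right_action} is immediate from \eqref{eq:corep_cond}; that \(\beta\) maps \(B\) into \(\Mult(B\otimes A)\) and obeys the Podle\'s condition \eqref{eq:Podles_cond} follows from the invariance \eqref{eq:U-inv} together with (3). Part (3), the inclusion \(\BrMultunit\in\U(\Comp(\Hils[L])\otimes B)\), is the braided counterpart of the corresponding statement for ordinary manageable multiplicative unitaries \cite{W1996}, and again rests on Definition~\ref{definition:manageability}.

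The heart of the matter is (4). I would define \(\Comult[B]\) directly by \eqref{eq:ComulB-R}, interpreting its right\nb-hand side inside \(\Mult\bigl(\Comp(\Hils[L])\otimes(B\boxtimes_{\Rmat}B)\bigr)\): because \(j_{1},j_{2}\) are the canonical \(\G\)\nb-equivariant embeddings, \(\bigl((\Id\otimes j_{1})\BrMultunit\bigr)\bigl((\Id\otimes j_{2})\BrMultunit\bigr)\) is a well\nb-defined unitary, and slicing the first leg shows that \eqref{eq:ComulB-R} determines a unique nondegenerate \Star{}homomorphism \(\Comult[B]\colon B\to\Mult(B\boxtimes_{\Rmat}B)\); its \(\G\)\nb-equivariance is inherited from that of \(j_{i}\) and \(\BrMultunit\). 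Coassociativity is then forced by the braided pentagon \eqref{eq:Braided-pentagon}: applying \((\Id\otimes j_{i})\) and \((\Id\otimes\Comult[B])\) to \(\BrMultunit\) and comparing the two iterated coproducts reduces to \eqref{eq:Braided-pentagon}, the braiding correction accounting exactly for the twist in the threefold product \(B\boxtimes_{\Rmat}B\boxtimes_{\Rmat}B\). Finally, the cancellation identities \(\Comult[B](B)j_{1}(B)=B\boxtimes_{\Rmat}B=\Comult[B](B)j_{2}(B)\) come from density of the slices and, crucially, from manageability, which supplies the second cancellation law---the braided analogue of the role played by the coinverse in \cite{W1996}.

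I expect the principal obstacle to lie in the bookkeeping of the braiding throughout (4): in contrast to the classical pentagon computation, every reordering of legs inside \(B\boxtimes_{\Rmat}B\) produces the braiding (equivalently a factor \(\zeta^{kl}\) as in \eqref{eq:bichar}), so one must check that the braided pentagon \eqref{eq:Braided-pentagon} carries \emph{exactly} the correction needed for \(\Comult[B]\) to be simultaneously a homomorphism into \(B\boxtimes_{\Rmat}B\) and coassociative. This is precisely the compatibility that the Yetter--Drinfeld reduction of the first paragraph is designed to absorb, which is why I would organise the proof around \cite{R2016} and delegate the braiding bookkeeping to the functoriality statements of Appendix~\ref{sec:YD-rep}.
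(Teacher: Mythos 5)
Your main route---pushing \(\BrMultunit\) through the functor \(\mathcal{F}\colon\Corepcat(\G)\to\YDCorepcat(\G)\) of Appendix~\ref{sec:YD-rep} and then invoking the general construction of \cite{R2016} (with the self-contained Woronowicz-style verification as backup)---is exactly the paper's argument, and the proposal is correct. The only substantive step you flag but do not carry out is the transfer of manageability: one must actually check that the induced dual representation \(\Corep{V}\) commutes with \(Q_{\Hils[L]}\otimes Q\), which the paper deduces from the bicharacter property of \(\Rmat\) via \cite{MRW2012}*{Proposition 3.10}; the paper also defines \(\Comult[B]\) as conjugation by \(\BrMultunit\) rather than directly through \eqref{eq:ComulB-R}, which makes the homomorphism property automatic.
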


\begin{proof}
Let~\((\Hils[L],\corep{V})\) be the object in~\(\Corepcat(\DuG)\) induced by 
the~\(\Rmattxt\)\nb-matrix~\(\Rmat\) from~\((\Hils[L],\corep{U})\) in~\eqref{eq:du-corep-R-mat}. 
The functor~\(\mathcal{F}\colon\Corepcat(\G)\to\YDCorepcat(\G)\) in the Proposition~\ref{prop:YD-vs-Corep-R} 
does not change the underlying Hilbert spaces and the morphisms.  
Since \(\BrMultunit\in\Hom^{\G}(\corep{U}\tenscorep\corep{U},\corep{U}\tenscorep\corep{U})\) in~\(\Corepcat(\G)\) it is also an endomorphism 
of the object~\((\Hils[L],\corep{U},\corep{V})\) in~\(\YDCorepcat(\G)\); hence 
\(\BrMultunit\in\Hom^{\DuG}(\corep{V}\tenscorep\corep{V},\corep{V}\tenscorep\corep{V})\) in~\(\Corepcat(\DuG)\). This implies that \eqref{eq:Braided-pentagon} remains same 
in~\(\YDCorepcat(\G)\). Hence, \(\BrMultunit\) is also a braided multiplicative unitary in the sense of 
\cite{MRW2017}*{Definition 3.2}. Let~\(\pi\in\Mor(A,\Comp(\Hils))\) and~\(\hat{\pi}\in\Mor(\hat{A},
\Comp(\Hils))\) be the canonical embeddings in~\eqref{eq:W-slices}. Define~\(\corep{R}'\defeq 
(\hat{\pi}\otimes\hat{\pi})\Rmat\in\U(\Hils\otimes\Hils)\) and~\(\Corep{V}\defeq (\Id_{\Comp(\Hils[L])}\otimes\hat{\pi})\corep{V}\in\U(\Hils[L]\otimes\Hils)\). Using~\cite{MRW2012}*{Equation 33} we obtain a concrete 
realisation of~\eqref{eq:du-corep-R-mat} as follows
\[
  \corep{R}'_{23}\Corep{U}_{12}\corep{R}'{}^{*}_{23}=\Corep{U}_{12}\Corep{V}_{13}
  \qquad\text{in~\(\U(\Hils[L]\otimes\Hils\otimes\Hils)\).}
\]
 Manageability of~\(\BrMultunit\) implies~\(\Corep{U}\) commutes with~\(Q_{\Hils[L]}\otimes Q\). Since ~\(\corep{R}'\) is a bicharacter it commutes with~\(Q\otimes Q\) by~\cite{MRW2012}*{Proposition 3.10}.
 Therefore, \(\Corep{V}_{13}\) commutes with~\(Q_{\Hils[L]}\otimes Q\otimes Q\) 
 which implies that~\(\Corep{V}\) commutes with~\(Q_{\Hils[L]}\otimes Q\). Thus~\(\BrMultunit\) is also 
 manageable in the sense of~\cite{MRW2017}*{Definition 3.5}. Hence \cite{R2016}*{Theorem 5.1} 
 completes the proof. In fact, the conclusions~\((1)-(3)\) and, assuming the existence 
 of~\(\Comult[B]\) characterised by~\eqref{eq:ComulB-R}, the cancellation conditions in~\((4)\) 
 follow immediately. 
 Define \(\Comult[B](b)\defeq \BrMultunit (b\otimes 1_{\Hils[L]})\BrMultunit^{*}\) for all~\(b\in B\). 
  Observe that \((B\hookrightarrow\Bound(\Hils[L]),\corep{U})\) is a faithful covariant 
 representation of~\((B,\beta,\G)\) on~\(\Hils[L]\). Hence, we use the braiding 
 operator~\(\Braiding{}{}\) in Definition~\ref{def:Brmult-T} to define the canonical embeddings~\(j_{1},j_{2}\in \Mor^{\G}(B,B\boxtimes_{\Rmat}B)\) in~\eqref{eq:def_j1_j2_Rmat}. Then braided pentagon 
 equation~\eqref{eq:Braided-pentagon} and~\((1)\) ensures that~\(\Comult[B]\colon B\to\Mult(B\boxtimes_{\Rmat} B)\)
 is a~\(\G\)\nb-equivariant \Star{}homomorphism, satisfies~\eqref{eq:ComulB-R} and is coassociative. Finally, 
 \(\Comult[B](B)(B\boxtimes_{\Rmat}B)=\Comult[B](B)j_{1}(B)j_{2}(B)=(B\boxtimes_{\Rmat}B) j_{2}(B)
 =B\boxtimes_{\Rmat}B\) shows that~\(\Comult[B]\) is nondegenerate.
\end{proof}

\section{Braided multiplicative unitary of $\textup{E}_{q}(2)$ groups}
 \label{sec:bcstqgpT}
Let~\(\{e_{p}\}_{p\in\Z}\) be an orthonormal basis of~\(\Hils=\ell^{2}(\Z)\). 
The operator~\(\hat{N}e_{p}\defeq pe_{p}\) is an unbounded densely defined self adjoint operator 
with \(\Sp(\hat{N})=\Z\) and generates the \(\Cst\)\nb-algebra~\(\Contvin(\Z)\). The 
operator~\(ze_{p}\defeq e_{p+1}\) is unitary and generates the~\(\Cst\)\nb-algebra~\(\Cont(\T)\). 
A simple computation shows that~\(z\) and \(\hat{N}\) satisfy the following commutation relation 
\begin{equation}
 \label{eq:comm_z_hatN}
  z^{*}\hat{N}z=\hat{N}+1_{\Hils}.
\end{equation}
Define~\(\Multunit\in\U(\Hils\otimes\Hils)\) by
\[
  \Multunit\defeq (1_{\Hils}\otimes z)^{\hat{N}\otimes 1_{\Hils}}=\int_{\Z\times\T}x^{s} dE_{\hat{N}}(s)\otimes dE_{z}(x),
\]
where~\(dE_{\hat{N}}\) and \(dE_{z}\) are the spectral measures of~\(\hat{N}\) and~\(z\), respectively. The 
commutation relation~\eqref{eq:comm_z_hatN} gives the action of~\(\Multunit\) on the orthonormal 
basis~\(\{e_{k}\otimes e_{l}\}_{k,l\in\Z}\) by~\(e_{k}\otimes e_{l}\to e_{k}\otimes e_{l+k}\).  It is easy to 
verify that \(\Multunit\) is a manageable multiplicative unitary with~\(Q=\Id_{\Hils}\) 
and~\(\widetilde{\Multunit}(\bar{e}_{k}\otimes e_{l})\defeq \bar{e}_{k}\otimes e_{l+k}\). 
\(\Multunit\) generates~\(\T\):
\begin{align*}
  \Cont(\T) &=\{(\omega\otimes\Id_{\Comp(\Hils)})\Multunit\mid \omega\in\Bound(\Hils)_{*}\}^{\CLS},\\
  \Comult[\Cont(\T)](z) &=\Multunit (z\otimes 1_{\Hils})\Multunit^{*}=z\otimes z.
\end{align*}
The dual of~\(\T\) is~\(\Z\) with the comultiplication map  
\(\Comult[\Contvin(\Z)](\hat{N})=\hat{N}\otimes 1_{\Contvin(\Z)}\dotplus 
1_{\Contvin(\Z)}\otimes\hat{N}\).

 Recall the \(\Rmattxt\)\nb-matrix~\(\Rmat\colon\Z\times\Z\to\T\) 
defined in~\eqref{eq:bichar}. Then~\(\T\) is a 
 quasitriangular with respect to~\(\Rmat\) and 
\(\Corepcat(\T)\) a unitarily braided monoidal category.

Let~\(\Hils[L]\defeq\Hils\otimes\Hils\) and fix an orthonormal 
basis~\(\{e_{i,j}\defeq e_{i}\otimes e_{j}\}_{i,j\in\Z}\) of~\(\Hils[L]\).
Define~\(\Corep{U}\defeq 1_{\Hils}\otimes\Multunit\in\U(\Hils^{\otimes 3})\cong\U(\Hils[L]\otimes\Hils)\). Clearly, 
\(\Corep{U}(e_{i,j}\otimes e_{p})=e_{i,j}\otimes e_{p+j}\) and 
denote~\(\Corep{U}\) by~\(\corep{U}\) while viewed as an element of 
\(\U(\Comp(\Hils[L])\otimes\Cont(\T))\). Also, the first equation in \eqref{eq:redbichar} shows that \(\corep{U}\) is a representation of~\(\T\) on~\(\Hils[L]\). Hence \((\Hils[L],\corep{U})\) is an object of~\(\Corepcat(\T)\).
 
 Define~\(\alpha,\beta\in\Mor(\Cont(\T),\Comp(\Hils))\)  by~\(\alpha(z)\defeq z\) and~\(\beta(z)\defeq \widetilde{V}\), where \(\widetilde{V}\) is the unitary operator defined by~\(\widetilde{V}e_{p}=\zeta^{-p}e_{p}\), respectively. Then~\((\alpha,\beta)\) is an 
 \(\Rmat\)\nb-Heisenberg pair on~\(\Hils\) because~\(z\) and~\(\widetilde{V}\) commute up to~\(\zeta\):
 \[
   z\widetilde{V}e_{p}
   =\zeta^{-p}z e_{p}
   =\zeta^{-p}e_{p+1}
   =\zeta\widetilde{V}e_{p+1}
   =\zeta\widetilde{V}z e_{p}.
 \]
 In fact, \(z\) and~\(\widetilde{V}\) generate noncommutative two torus.
Using these we compute the unitary~\(Z\in\U(\Hils[L]\otimes\Hils[L])\) in~\eqref{eq:Z}. 
Equivalently, \(Z\) is uniquely defined by 
\begin{equation}
\label{eqn:expr_z}
Z_{12}=\corep{U}_{2\beta}^{*}\corep{U}_{1\alpha}^*\corep{U}_{2\beta}\corep{U}_{1\alpha} 
\qquad\text{in~\(\U(\Hils[L]\otimes\Hils[L]\otimes\Hils)\),}
\end{equation}
where \(\corep{U}_{1\alpha}=\corep{U}\) 
and \(\corep{U}_{1\beta}=(1_{\Hils}\otimes 1_{\Hils}\otimes \widetilde{V})^{(1_{\Hils}\otimes\hat{N}\otimes 1_{\Hils})}\). 
Clearly, on the basis elements we have 
\(\corep{U}_{1\beta}(e_{k,l}\otimes e_p)=\zeta^{-pl}e_{k,l}\otimes e_{p}\). Therefore, 
\begin{align*}
  \corep{U}_{2\beta}^{*}\corep{U}_{1\alpha}^*\corep{U}_{2\beta}\corep{U}_{1\alpha}(e_{i,j}\otimes e_{k,l}\otimes e_p)
&=\zeta^{-(p+j)l}\corep{U}_{2\beta}^{*}\corep{U}_{1\alpha}^*(e_{i,j}\otimes e_{k,l}\otimes e_{p+j})\\
&= \zeta^{-(p+j)l+pl}e_{i,j}\otimes e_{k,l}\otimes e_{p}
  =\zeta^{-jl}e_{i,j}\otimes e_{k,l}\otimes e_{p}.
\end{align*}
Hence the the unitary braiding operator \(\Braiding{}{}\defeq Z\circ \Flip\in\U(\Hils[L]\otimes\Hils[L])\) is given by
\begin{equation}
 \label{eq:braiding}
 Z (e_{i,j}\otimes e_{k,l}) =\zeta^{-jl}e_{i,j}\otimes e_{k,l},
 \qquad
\Braiding{}{}e_{i,j}\otimes e_{k,l}=\zeta^{-jl}e_{k,l}\otimes e_{i,j}.
\end{equation}

Define a closed operator \(X=\Mod{X}\Ph{X}\) on~\(\Hils[L]\otimes\Hils[L]\) by 
\begin{equation}
 \label{eq:def_X}
 \begin{aligned}
   \Mod{X}e_{i,j}\otimes e_{k,l} &=|q|^{k-i+1}e_{i,j}\otimes e_{k,l},\\
   \Ph{X}e_{i,j}\otimes e_{k,l}   &=\zeta^{-j}\Ph{q}^{k-i+1}e_{i-1,j-1}\otimes e_{k-1,l+1},\\
   X e_{i,j}\otimes e_{k,l}          &=\zeta^{-j}q^{k-i+1} e_{i-1,j-1}\otimes e_{k-1,l+1}.
\end{aligned}
\end{equation}
Then~\(X\) is a normal operator because~\(\Mod{X}\) commutes with its phase 
\(\Ph{X}\) in the polar decomposition and~\(\Sp(X)=\cl{\C}^{|q|}\). 

 Recall that the operator \(n\), defined by \(n e_{i,j}=q^{i}e_{i,j+1}\), is closed and injective. Hence it is invertible and on the standard basis elements 
 we have \(n^{-1}e_{i,j}=q^{-i}e_{i,j-1}\). Define 
 \(P\in\U(\Hils[L])\) by
 \begin{equation} 
  \label{eq:P-def}
  Pe_{i,j}=\zeta^{-j}e_{i,j}.
 \end{equation}
 Then the following computation shows~\(X=n^{-1}vP\otimes vn\):
\[
    (n^{-1}vP\otimes vn) e_{i,j}\otimes e_{k,l} 
 = \zeta^{-j} q^{k}(n^{-1}\otimes v) e_{i-1,j}\otimes e_{k,l+1}
 =\zeta^{-j}q^{k-i+1} e_{i-1,j-1}\otimes e_{k-1,l+1}.
\]
The quantum exponential function~\(\QuantExp\colon \cl{\C}^{|q|}\to \T\) is defined in~\cite{W1992a}*{Equation 1.2} by
 \begin{equation*}
    \QuantExp(z)=
    \begin{cases}
      \prod_{k=0}^{\infty}\frac{1+{|q|}^{2k}\overline{z}}{1+{|q|}^{2k}z} & \text{if}\ z\in\cl{\C}^{|q|}\setminus\{-{|q|}^{-2k}\mid k=0,1,\cdots\}, \\
      -1 & \text{otherwise}
    \end{cases}
  \end{equation*}
  Since~\(\QuantExp\) is a unitary multiplier of~\(\Contvin(\cl{\C}^{|q|})\), we observe  
  \(\QuantExp(X)\in\U(\Hils[L]\otimes\Hils[L])\).

\begin{theorem}
\label{theorem:main}
Let~\(\Corep{Y}=\Multunit_{13}\Multunit_{23}\in\U(\Hils\otimes\Hils\otimes\Hils\otimes\Hils)\). 
Then the operator \(\BrMultunit:=\QuantExp(X)\Corep{Y}\) 
is a manageable braided multiplicative unitary on~\(\Hils[L]\otimes\Hils[L]\) 
over~\(\T\) relative to~\((\corep{U},\Rmat)\).
\end{theorem}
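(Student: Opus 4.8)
The plan is to verify, one at a time, the two defining properties of Definition~\ref{def:Brmult-T} and the three manageability identities of Definition~\ref{definition:manageability}, exploiting throughout the factorisation~$\BrMultunit=\QuantExp(X)\Corep{Y}$. The group\nb-like factor~$\Corep{Y}=\Multunit_{13}\Multunit_{23}$ will account for the comultiplication~$\Comult[B](v)=j_{1}(v)j_{2}(v)$, while the exponential factor~$\QuantExp(X)$ will account for~$\Comult[B](n)=j_{1}(v)j_{2}(n)\dotplus j_{1}(n)j_{2}(v^{*})$; the whole argument parallels Woronowicz's treatment of~$\qE$, the only genuinely new phenomenon being that for complex~$q$ the operators involved commute only up to the phase~$\zeta$, which is absorbed by the diagonal braiding~$Z$ of~\eqref{eq:braiding}.

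First I would check the covariance condition~\eqref{eq:U-inv}, that is, $\BrMultunit\in\Hom^{\T}(\corep{U}\tenscorep\corep{U},\corep{U}\tenscorep\corep{U})$, by showing that each factor of~$\BrMultunit$ commutes with the diagonal action~$\corep{U}_{13}\corep{U}_{23}$. For~$\Corep{Y}$ this follows by inspecting the legs: it shares with~$\corep{U}_{13}\corep{U}_{23}$ only a control leg of~$\Multunit$, and two copies of~$\Multunit$ with a common control leg and disjoint target legs commute. For~$\QuantExp(X)$ it suffices that~$X$ have total~$\T$\nb-degree zero; this is visible from~\eqref{eq:def_X}, since~$X$ sends a basis index~$(i,j,k,l)$ to~$(i-1,j-1,k-1,l+1)$ and so preserves the quantity~$j+l$ by which the~$\T$\nb-action is graded. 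Hence~$\QuantExp(X)$, and therefore~$\BrMultunit$, commutes with~$\corep{U}_{13}\corep{U}_{23}$.

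The heart of the proof, and the step I expect to be the main obstacle, is the braided pentagon equation~\eqref{eq:Braided-pentagon}. Writing the right\nb-hand side as~$\BrMultunit_{12}\bigl(\Braiding{}{}_{23}\BrMultunit_{12}\Braiding{}{}_{23}^{*}\bigr)\BrMultunit_{23}$ and inserting~$\BrMultunit=\QuantExp(X)\Corep{Y}$, the strategy is: (i)~to record a multiplicativity relation for the group\nb-like part~$\Corep{Y}$ alone, descending from the pentagon equation~\eqref{eq:pentagon} for~$\Multunit$; (ii)~to commute the~$\Corep{Y}$\nb-factors past the~$\QuantExp(X)$\nb-factors, which replaces the exponents~$X_{12}$ and~$X_{23}$ by suitable conjugates; and (iii)~to collapse the remaining product of quantum exponentials by the functional equation~$\QuantExp(R)\QuantExp(S)=\QuantExp(R\dotplus S)$ of~\cite{W1992a}, valid whenever~$R,S$ are normal with~$\Sp(R),\Sp(S)\subseteq\Specn$ and~$RS=\Mod{q}^{2}SR$. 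The crux is to verify exactly this~$\Mod{q}^{2}$\nb-commutation for the two conjugated exponents: the cross\nb-terms built from~$v$, $n$ and~$P$ on the three legs commute only up to the phase~$\zeta$, and conjugation by the diagonal braiding~$\Braiding{}{}_{23}$ supplies precisely the compensating phase, so that the hypotheses of the functional equation are met and the two sides of~\eqref{eq:Braided-pentagon} reduce to the same quantum exponential. For real~$q$ one has~$\zeta=1$, the braiding is trivial, and the argument degenerates to Woronowicz's ordinary pentagon for~$\qE$.

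Finally, for manageability I would take~$Q_{\Hils[L]}$ to be the strictly positive diagonal operator~$Q_{\Hils[L]}e_{i,j}=\Mod{q}^{\,cj}e_{i,j}$ for a suitable real constant~$c$; note that this depends only on the~$\T$\nb-leg index, which by the leg analysis of the covariance step is forced. Since~$\Corep{Y}$ leaves the indices~$j,l$ unchanged and~$X$ preserves~$j+l$, the operator~$Q_{\Hils[L]}\otimes Q_{\Hils[L]}$ commutes with both factors, hence with~$\BrMultunit$; and~$\Corep{U}$ commutes with~$Q_{\Hils[L]}\otimes Q$ for~$Q=\Id_{\Hils}$, as~$\Multunit$ does not alter its control index. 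These give the first two identities of Definition~\ref{definition:manageability}. For the last identity~\eqref{eq:Br-mang} I would construct~$\widetilde{\BrMultunit}=\QuantExp(\widetilde{X})\,\widetilde{\Corep{Y}}$ on~$\conj{\Hils[L]}\otimes\Hils[L]$ from the conjugate data~$\widetilde{\Multunit}$ and~$\widetilde{Z}$, and verify the matrix\nb-element identity directly. Since~$Z$ and~$\widetilde{Z}$ are explicit diagonal phases by~\eqref{eq:braiding}, this reduces, via the reflection property of~$\QuantExp$ together with the known form of~$\widetilde{\Multunit}$, to a bookkeeping of phases on top of the manageability of the underlying~$\qE$\nb-type operator~$\QuantExp(X)$.
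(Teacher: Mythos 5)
Your proposal follows essentially the same route as the paper: covariance of each factor via leg analysis and the degree\nb-zero property of~$X$, the braided pentagon via the multiplicativity of~$\Corep{Y}$, conjugation of the exponents past the group\nb-like factors and the braiding, and the functional equation $\QuantExp(R)\QuantExp(S)=\QuantExp(R\dotplus S)$ from~\cite{W1992a} (the paper's Lemma~\ref{lem:pentagon}), and manageability via an explicit diagonal~$Q_{\Hils[L]}$ (the paper takes $c=1$) and a matrix\nb-element computation with the Fourier coefficients of~$\QuantExp$. The only detail your sketch leaves to the bookkeeping you acknowledge is the precise form of the conjugate unitary, which in the paper comes out as $\widetilde{\BrMultunit}=\QuantExp(\widetilde{X})^{*}\widetilde{Z}^{2}\widetilde{\Corep{Y}}$ rather than $\QuantExp(\widetilde{X})\widetilde{\Corep{Y}}$.
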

The rest of this section is devoted to the proof of this theorem.
\begin{lemma}
\label{lem:pentagon}
Define~\(T(\lambda)\defeq\QuantExp(\lambda X)\) for all~\(\lambda\in\cl{\C}^{|q|}\). Then
\[
\QuantExp(X)_{23}T(\lambda)_{12}
=T(\lambda)_{12}\QuantExp(\lambda n^{-1}vP\otimes v^2P\otimes vn) \QuantExp(X)_{23}
\quad\text{in~\(\U(\Hils[L]\otimes\Hils[L]\otimes\Hils[L])\).}
\]
\end{lemma}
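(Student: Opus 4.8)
The plan is to recognise the stated identity as an instance of the pentagon functional equation satisfied by the quantum exponential function \(\QuantExp\), applied to two \(q\)\nb-commuting normal operators. First I would rewrite both sides intrinsically. Since \(\QuantExp(\lambda X)\) is a Borel function of the normal operator \(X\), placing it in the legs \(12\) (resp.\ \(23\)) of \(\Hils[L]\otimes\Hils[L]\otimes\Hils[L]\) produces \(\QuantExp(\lambda X_{12})\) (resp.\ \(\QuantExp(X_{23})\)), where \(X_{12}\defeq X\otimes 1_{\Hils[L]}\) and \(X_{23}\defeq 1_{\Hils[L]}\otimes X\). I would then identify the middle factor on the right as \(\lambda X_{12}X_{23}\): only the second leg is shared by \(X_{12}\) and \(X_{23}\), and there the factors compose as \((vn)(n^{-1}vP)=v^{2}P\) (using \(nn^{-1}=1_{\Hils[L]}\)), while the outer legs carry \(n^{-1}vP\) and \(vn\), so that \(X_{12}X_{23}=n^{-1}vP\otimes v^{2}P\otimes vn\). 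The claim then reads
\[
\QuantExp(X_{23})\QuantExp(\lambda X_{12})=\QuantExp(\lambda X_{12})\QuantExp(\lambda X_{12}X_{23})\QuantExp(X_{23}).
\]

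The structural heart of the matter is the commutation relation between \(X_{12}\) and \(X_{23}\). Using \(vn=qnv\), \(Pn=\zeta^{-1}nP\) and \(Pv=vP\) (each read off from \eqref{eq:con-gen-E2} and \eqref{eq:P-def}), the shared second leg of \(X_{23}X_{12}\) computes to \((n^{-1}vP)(vn)=\zeta^{-1}q^{2}\,v^{2}P\), whence
\[
X_{23}X_{12}=\Mod{q}^{2}\,X_{12}X_{23}\qquad\text{in }\U(\Hils[L]\otimes\Hils[L]\otimes\Hils[L]),
\]
because \(\zeta^{-1}q^{2}=\Mod{q}^{2}\). (The same relation follows at one stroke by letting both sides act on the basis \(\{e_{i,j}\otimes e_{k,l}\otimes e_{m,n}\}\) via \eqref{eq:def_X}.) Setting \(S\defeq X_{23}\) and \(R\defeq\lambda X_{12}\), this reads \(SR=\Mod{q}^{2}RS\), and the middle factor is precisely \(RS=\lambda X_{12}X_{23}\). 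Both \(R\) and \(S\) are normal with spectra in \(\Specn\): since \(\Sp(X)=\Specn\), tensoring with the identity preserves this, and \(\lambda\in\Specn\) gives \(\lambda\Specn\subseteq\Specn\). Thus the reduced identity is \(\QuantExp(S)\QuantExp(R)=\QuantExp(R)\QuantExp(RS)\QuantExp(S)\).

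Finally I would invoke the operator functional equation for \(\QuantExp\) from~\cite{W1992a}: for normal \(R,S\) with spectra in \(\Specn\) satisfying \(SR=\Mod{q}^{2}RS\) (so that \(RS\) is again normal with \(\Sp(RS)\subseteq\Specn\)), one has \(\QuantExp(S)\QuantExp(R)=\QuantExp(R)\QuantExp(RS)\QuantExp(S)\), which is exactly the reduced identity. The main obstacle is analytic rather than algebraic: because \(X_{12}\) and \(X_{23}\) are unbounded, I must ensure that \(SR=\Mod{q}^{2}RS\) holds in the strict operator sense demanded by the functional equation, together with normality of \(RS\) and the inclusion \(\Sp(RS)\subseteq\Specn\), rather than merely on finite linear combinations of basis vectors. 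I expect to dispatch this by exploiting the explicit weighted-shift form of all operators on \(\{e_{i,j}\otimes e_{k,l}\otimes e_{m,n}\}\): there \(X_{12}X_{23}\) acts by \(e_{i,j}\otimes e_{k,l}\otimes e_{m,n}\mapsto\zeta^{-j-l}q^{m-i+1}e_{i-1,j-1}\otimes e_{k-2,l}\otimes e_{m-1,n+1}\), whose modulus \(\Mod{q}^{m-i+1}\) is invariant under the (bijective) index shift, so that \(RS\) is normal with \(\Sp(RS)=\Specn\) and the hypotheses of the functional equation are indeed met.
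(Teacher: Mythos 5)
Your proposal is correct and follows essentially the same route as the paper: both identify the middle factor as \(\QuantExp(\lambda X_{12}X_{23})\), verify the \(\Mod{q}^{2}\)\nb-commutation \(X_{23}X_{12}=\Mod{q}^{2}X_{12}X_{23}\) (equivalently the polar\nb-decomposition relations of \cite{W1992a} for the pair \((\lambda X_{12},\lambda X_{12}X_{23})\), whose quotient \(X_{23}\) is normal with spectrum \(\Specn\)), and then invoke Woronowicz's operator equalities \cite{W1992a}*{Theorems 2.1--2.2 and 3.1}, whose combination is precisely the pentagon identity \(\QuantExp(S)\QuantExp(R)=\QuantExp(R)\QuantExp(RS)\QuantExp(S)\) that you cite. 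The one point to handle with care is that the hypotheses of those theorems are phrased via the polar decompositions (strongly commuting moduli, commuting phases, and the Weyl\nb-type cross relations) rather than the bare relation \(SR=\Mod{q}^{2}RS\), but, as you anticipate, these follow at once from the explicit weighted\nb-shift action on the basis \(\{e_{i,j}\otimes e_{k,l}\otimes e_{m,n}\}\), and the degenerate case \(\lambda=0\) is trivial since \(T(0)=1\).
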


\begin{proof}
 Clearly, the equality holds for~\(\lambda=0\). Therefore, we fix a nonzero 
 element \(\lambda \in\cl{\C}^{|q|}\). 
 The operators~\(R=\lambda n^{-1}vP\otimes vn\otimes 1_{\Hils[L]}=\lambda X_{12}\) and~\(S=\lambda n^{-1}vP\otimes v^{2}P\otimes vn=\lambda (1_{\Hils[L]} \otimes v^{2}P\otimes 1_{\Hils[L]})X_{13}\) 
 are normal, each pair of operators \((\Mod{R},\Mod{S})\) and \((\Ph{R},\Ph{S})\) strongly commute and
 \[
    \Ph{R}\Mod{S}\Ph{R}^*=|q|^{-1}\Mod{S}, 
    \quad 
    \Ph{S}\Mod{R}\Ph{S}^*=|q|\Mod{R},
    \quad 
    \Sp(R),\Sp(S)= \cl{\C}^{|q|}.
 \]
 Also \(R^{-1}S=1_{\Hils[L]}\otimes n^{-1}vP\otimes vn=X_{23}\) is normal with spectrum \(\cl{\C}^{|q|}\). 
 Then \cite{W1992a}*{theorem  2.1-2.2} show that 
 \(R\dotplus S\) is normal with spectrum~\(\cl{\C}^{|q|}\) and
 \[
  \QuantExp(R^{-1}S)R\QuantExp(R^{-1}S)^{*}
  =R\dotplus S.
 \]
 Since the functional calculus is compatible with conjugation by 
 unitaries, \cite{W1992a}*{Theorem 3.1} implies 
 \begin{align*}
     \QuantExp(R^{-1}S)\QuantExp(R)\QuantExp(R^{-1}S)^{*}
 &=\QuantExp(\QuantExp(R^{-1}S)R\QuantExp(R^{-1}S)^{*})\\
 &=\QuantExp(R\dotplus S)
   =\QuantExp(R)\QuantExp(S). \qedhere
 \end{align*}
\end{proof}

For~\(\lambda\in\Specn\) define \(\BrMultunit^{\lambda}\defeq \QuantExp(\lambda X)\Corep{Y}\in
\U(\Hils[L]\otimes\Hils[L])\). 

\begin{proposition}
 \label{prop:BrMultunit} 
 The family of unitaries~\(\{\BrMultunit^{\lambda}\}_{\lambda\in\Specn}\) commute 
 with~\(\Corep{U}\tenscorep\Corep{U}\) and satisfy a variant of the braided pentagon 
 equation~\eqref{eq:Braided-pentagon}:
 \begin{equation}
  \label{eq:corep-family}
   \BrMultunit_{23}\BrMultunit^{\lambda}_{12}
   =\BrMultunit^{\lambda}_{12}\Braiding{}{}_{23}\BrMultunit^{\lambda}_{12}\Braiding{}{}_{23}^{*}\BrMultunit_{23} 
   \quad\text{in~\(\U(\Hils[L]\otimes\Hils[L]\otimes\Hils[L])\).}
 \end{equation}
\end{proposition}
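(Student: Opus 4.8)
The statement has two parts: the $\Corep{U}\tenscorep\Corep{U}$\nb-invariance of each $\BrMultunit^{\lambda}$ and the $\lambda$\nb-twisted braided pentagon \eqref{eq:corep-family} (whose case $\lambda=1\in\Specn$ is the braided pentagon for $\BrMultunit=\BrMultunit^{1}$). I would first dispose of the invariance. Writing $\BrMultunit^{\lambda}=\QuantExp(\lambda X)\Corep{Y}$, one reads off from \eqref{eq:def_X} and from $\Corep{Y}(e_{i,j}\otimes e_{k,l})=e_{i,j}\otimes e_{k+i+j,l}$ that both $X$ and $\Corep{Y}$ preserve the total degree $j+l$ of a basis vector $e_{i,j}\otimes e_{k,l}$. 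Since $\Corep{U}$ grades $e_{i,j}$ by its second index $j$, the representation $\Corep{U}\tenscorep\Corep{U}$ grades $e_{i,j}\otimes e_{k,l}$ by $j+l$, and an operator commutes with it exactly when it preserves this grading; degree preservation of $X$ (hence of $\QuantExp(\lambda X)$ by functional calculus) and of $\Corep{Y}$ then yields the commutation.

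For \eqref{eq:corep-family} the plan is to substitute $\BrMultunit^{\lambda}=T(\lambda)\Corep{Y}$, with $T(\lambda)=\QuantExp(\lambda X)$ as in Lemma~\ref{lem:pentagon}, and to separate the shift part $\Corep{Y}=\Multunit_{13}\Multunit_{23}$ from the exponential part. The shift part satisfies the ordinary pentagon $\Corep{Y}_{23}\Corep{Y}_{12}=\Corep{Y}_{12}\Corep{Y}_{13}\Corep{Y}_{23}$ (a direct consequence of the pentagon equation for $\Multunit$), and a direct check gives $[\Corep{Y}_{23},X_{12}]=0$. Combining these with Lemma~\ref{lem:pentagon} in the form $T(1)_{23}T(\lambda)_{12}=T(\lambda)_{12}\QuantExp(\lambda S)T(1)_{23}$, where $S=n^{-1}vP\otimes v^{2}P\otimes vn$, rewrites the left-hand side of \eqref{eq:corep-family} as $T(\lambda)_{12}\QuantExp(\lambda S)T(1)_{23}\Corep{Y}_{12}\Corep{Y}_{13}\Corep{Y}_{23}$. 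On the right-hand side I would compute the braided conjugations explicitly, namely $\Braiding{}{}_{23}\Corep{Y}_{12}\Braiding{}{}_{23}^{*}=\Corep{Y}_{13}$ and $\Braiding{}{}_{23}X_{12}\Braiding{}{}_{23}^{*}=n^{-1}vP\otimes P\otimes vn$, and then push the surviving $\Corep{Y}_{12}$ through the resulting quantum exponential using $\Corep{Y}_{12}(n^{-1}vP\otimes P\otimes vn)\Corep{Y}_{12}^{*}=S$; this conjugation is precisely what manufactures the extra factor $v^{2}$ in the correction term $S$ of Lemma~\ref{lem:pentagon}. The right-hand side then also collapses to $T(\lambda)_{12}\QuantExp(\lambda S)\Corep{Y}_{12}\Corep{Y}_{13}T(1)_{23}\Corep{Y}_{23}$.

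Cancelling the common prefix $T(\lambda)_{12}\QuantExp(\lambda S)$ and the common suffix $\Corep{Y}_{23}$ reduces the entire identity \eqref{eq:corep-family} to the single commutation relation $[\,\Corep{Y}_{12}\Corep{Y}_{13},\,X_{23}\,]=0$. I expect this to follow from $\Corep{Y}_{12}X_{23}\Corep{Y}_{12}^{*}=q^{M}X_{23}$ and $\Corep{Y}_{13}X_{23}\Corep{Y}_{13}^{*}=q^{-M}X_{23}$, where $M=\hat{N}\otimes 1+1\otimes\hat{N}$ is the number operator of the first leg $\Hils[L]$, so that the two scalings by $q^{\pm M}$ cancel. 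The main obstacle is the bookkeeping across $\Hils[L]^{\otimes 3}\cong\Hils^{\otimes 6}$: verifying that the correction term of Lemma~\ref{lem:pentagon} really matches the braided middle factor $\Braiding{}{}_{23}\BrMultunit^{\lambda}_{12}\Braiding{}{}_{23}^{*}$ after the conjugations by $\Corep{Y}$ and $\Braiding{}{}$, with every $\zeta$\nb-phase and $q$\nb-power accounted for. Carrying the whole family $\{\BrMultunit^{\lambda}\}_{\lambda\in\Specn}$ rather than $\BrMultunit^{1}$ alone is what makes the argument self-contained, since conjugating $\QuantExp(\lambda X)$ by the shift operators $\Corep{Y}$ returns quantum exponentials with rescaled arguments that remain within the family.
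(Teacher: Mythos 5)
Your proposal is correct and follows essentially the same route as the paper: the same decomposition \(\BrMultunit^{\lambda}=\QuantExp(\lambda X)\Corep{Y}\), the ordinary pentagon for \(\Corep{Y}\), the commutations \([\Corep{Y}_{23},X_{12}]=0\) and \([\Corep{Y}_{12}\Corep{Y}_{13},X_{23}]=0\), the conjugation identities that turn the braided middle factor into \(\QuantExp(\lambda\, n^{-1}vP\otimes v^{2}P\otimes vn)\), and Lemma~\ref{lem:pentagon} as the final input. The only (harmless) stylistic differences are your grading argument for the \(\Corep{U}\tenscorep\Corep{U}\)-invariance and the \(q^{\pm M}\)-cancellation explanation of the key commutation, both of which check out against the paper's direct basis computations.
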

\begin{proof}
The unitaries \(\Multunit_{25}\Multunit_{45}\) and 
\(\Multunit_{13}\Multunit_{23}\) commute in~\(\U(\Hils^{\otimes 5})\). 
Identifying~\(\Hils[L]=\Hils\otimes\Hils\) and~\(\Hils[L]\otimes\Hils[L]\otimes\Hils=\Hils^{\otimes 5}\) we 
get \(\Corep{U}\tenscorep\Corep{U}\) and~\(\Corep{Y}_{12}\) commutes in~\(\U(\Hils[L]\otimes\Hils[L]
\otimes\Hils)\). Moreover \(\Corep{U}\tenscorep\Corep{U}
(e_{i,j}\otimes e_{k,l}\otimes e_{p})=e_{i,j}\otimes e_{k,l}\otimes e_{p+j+l}\) 
 shows the operators~\(\Mod{X}_{12}\) and~\((\Ph{X})_{12}\) defined in~\eqref{eq:def_X} commute with 
 \(\Corep{U}\tenscorep\Corep{U}\). Therefore \(\Corep{U}\tenscorep\Corep{U}\) commutes with~\(\lambda X_{12}\) for 
 all~\(\lambda\in\Specn\). Therefore, \(\Corep{U}\tenscorep\Corep{U}\) commutes 
 with \(\QuantExp(\lambda X)_{12}\) and consequently with 
 \(\BrMultunit^{\lambda}_{12}\)  in~\(\U(\Hils[L]\otimes\Hils[L]\otimes\Hils)\).

 The action of~\(vn\) and \(\Corep{Y}=\Multunit_{13}\Multunit_{23}\) on the basis elements 
 of~\(\Hils[L]\) and~\(\Hils[L]\otimes\Hils[L]\) are given by~\(e_{i,j}\to q^{i} e_{i-1,j+1}\) and 
 \(e_{i,j}\otimes e_{k,l} \to e_{i,j}\otimes e_{k+i+j,l}\), respectively. Therefore, 
 \begin{align}
  \label{eq:vn-comm-V}
        \Corep{Y}(vn\otimes 1_{\Hils[L]})\Corep{Y}^{*} e_{i,j}\otimes e_{k,l}
   &= \Corep{Y}(vn\otimes 1_{\Hils[L]}) e_{i,j}\otimes e_{k-i-j,l} \nonumber \\
   &= q^{i}\Corep{Y} e_{i-1,j+1}\otimes e_{k-i-j,l} \nonumber \\
   &= q^{i} e_{i-1,j+1} \otimes e_{k,l} 
     =(vn\otimes 1_{\Hils[L]})e_{i,j}\otimes e_{k,l}
 \end{align}
  implies that~\(\Corep{Y}\) commutes with~\(vn\otimes 1_{\Hils[L]}\). This implies that 
  \eqref{eq:corep-family} is equivalent to
 \[ 
   \QuantExp( X)_{23}T(\lambda)_{12}\Corep{Y}_{23}\Corep{Y}_{12}\Corep{Y}_{23}^{*}
   \QuantExp(X)_{23}^{*}
   =T(\lambda)_{12}\Corep{Y}_{12}Z_{23}T(\lambda)_{13}\Corep{Y}_{13}Z^{*}_{23},
 \]
 where~\(\{T(\lambda)\}_{\lambda\in\cl{\C}^{|q|}}\) is the family of unitary operators defined 
 in Lemma~\ref{lem:pentagon}. 
 
 Now~\(\Corep{Y}\) is a multiplicative unitary because \(\Corep{Y}_{23}\Corep{Y}_{12}(e_{i,j}\otimes 
 e_{k,l}\otimes e_{s,t}) = e_{i,j}\otimes e_{k+i+j,l} \otimes e_{s+k+i+j+l,t} =
 \Corep{Y}_{12}\Corep{Y}_{13}\Corep{Y}_{23} (e_{i,j}\otimes e_{k,l}\otimes e_{s,t})\). This 
 simplifies the last equation to 
 \[ 
   \QuantExp(X)_{23}T(\lambda)_{12}\Corep{Y}_{12}\Corep{Y}_{13}
   \QuantExp(X)_{23}^{*}
   =T(\lambda)_{12}\Corep{Y}_{12}Z_{23}T(\lambda)_{13}\Corep{Y}_{13}Z^{*}_{23}.
 \]
   
 Using~\eqref{eq:def_X} we compute
 \begin{align*}	
  &   \Corep{Y}_{13}^{*}\Corep{Y}_{12}^{*}X_{23} \Corep{Y}_{12}\Corep{Y}_{13} (e_{i,j}\otimes e_{k,l}\otimes e_{s,t})\\
  &= \Corep{Y}_{13}^{*}\Corep{Y}_{12}^{*} X_{23} (e_{i,j}\otimes e_{k+i+j,l}\otimes e_{s+i+j,t})\\
  &= \zeta^{-l}q^{(s+i+j)-(k+i+j)+1} \Corep{Y}_{13}^{*}\Corep{Y}_{12}^{*}(e_{i,j}\otimes e_{k+i+j-1,l-1} \otimes e_{s+i+j-1,t+1})\\
  &= \zeta^{-l}q^{s-k+1} e_{i,j}\otimes e_{k-1,l-1} \otimes e_{s-1,t+1} 
    =X_{23} (e_{i,j}\otimes e_{k,l}\otimes e_{s,t}).
\end{align*}
 Therefore, \(\Corep{Y}_{12}\Corep{Y}_{13}\) commutes with~\(\QuantExp(X)_{23}\) and this implies 
 \[
  \QuantExp(X)_{23}T(\lambda)_{12}\QuantExp(X)_{23}^{*} \Corep{Y}_{12}\Corep{Y}_{13}
   =T(\lambda)_{12}\Corep{Y}_{12}Z_{23}T(\lambda)_{13}\Corep{Y}_{13}Z^{*}_{23}.
 \]
 Next we compute 
 \begin{align*}
  &   \Corep{Y}_{12}Z_{23}X_{13}Z_{23}^{*}\Corep{Y}_{12}^{*} (e_{i,j}\otimes e_{k,l}\otimes e_{s,t})\\ 
  & =\zeta^{lt}\Corep{Y}_{12}Z_{23}X_{13} (e_{i,j}\otimes e_{k-i-j,l}\otimes e_{s,t})\\ 
  &= \zeta^{lt} \zeta^{-j}q^{s-i+1}\Corep{Y}_{12}Z_{23} (e_{i-1,j-1}\otimes e_{k-i-j,l}\otimes e_{s-1,t+1})\\
  &=\zeta^{lt-j-l(t+1)}q^{s-i+1} e_{i-1,j-1}\otimes e_{k-2,l} \otimes e_{s-1, t+1}\\
  &= X_{13}(1_{\Hils[L]}\otimes v^{2}P\otimes 1_{\Hils[L]}) (e_{i,j}\otimes e_{k,l}\otimes e_{s,t}).
 \end{align*}
This implies 
\[
\QuantExp(X)_{23}T(\lambda)_{12}\QuantExp(X)_{23}^{*} \Corep{Y}_{12}\Corep{Y}_{13}
   =T(\lambda)_{12}\QuantExp(\lambda n^{-1}vP\otimes v^2P\otimes vn)\Corep{Y}_{12}Z_{23}\Corep{Y}_{13}Z^{*}_{23}.
\]
Finally, observe that~\(\Corep{Y}_{13}\) commutes with~\(Z_{23}\) and then the last equation follows from 
Lemma~\ref{lem:pentagon}.
\end{proof}

In particular, for \(\lambda=1\) the Proposition~\ref{prop:BrMultunit} shows that 
\(\BrMultunit\) is a braided multiplicative unitary over~\(\T\) 
with respect to~\((\Corep{U},\Rmat)\). 

It remains to show that~\(\BrMultunit\) is manageable. We start with the 
description of the operator~\(\widetilde{Z}\) appearing in the definition \ref{definition:manageability}. 
The contragradient \(\corep{U}^{c}=\corep{U}^{\transpose\otimes R}\) of \(\corep{U}\), 
where~\(R(z)\defeq z^{*}\) is the bounded coinverse of~\(\T\), acts on the basis 
elements~\(\conj{e_{i,j}}\otimes e_{p}\) as~\(\Corep{U}^{c} (\conj{e_{i,j}}\otimes e_p)
=\conj{e_{i,j}}\otimes e_{p-j}\). A similar calculation for \(\widetilde{Z}\) like that of \(Z\) yields 
\[
\widetilde{Z}\ \conj{e_{i,j}}\otimes e_{k,l}
=\zeta^{jl}\conj{e_{i,j}}\otimes e_{k,l}.
\] 

Next we define the operator \(Q_{\Hils[L]}\) required by definition~\ref{definition:manageability}:
\[
Q_{\Hils[L]}e_{i,j}=\abs{q}^{j}e_{i,j} .
\]
This is a strictly positive operator on~\(\Hils\) with spectrum \(\abs{q}^{\Z}\cup\{0\}\).
A simple calculation shows that \(\Corep{Y}\) and~\(X\) commute 
with \(Q_{\Hils[L]}\otimes Q_{\Hils[L]}\) and therefore \(\BrMultunit\) commutes with 
\(Q_{\Hils[L]}\otimes Q_{\Hils[L]}\). Also the unitary \(\Corep{U}\) leaves the first factor of the standard basis 
vector unchanged and~\(Q=\Id_{\Hils}\) implies \(\Corep{U}\) commutes with \(Q_{\Hils[L]}\otimes Q\).

Finally, we need a unitary~\(\widetilde{\BrMultunit}\in\U(\conj{\Hils[L]}\otimes\Hils[L])\) that satisfies 
\eqref{eq:Br-mang}. It is sufficient to check this if the involved vectors~\(x,y,u,v\) are 
standard basis vectors~\(x=e_{i,j}\), \(y=e_{s,t}\), \(u=e_{k,l}\) and~\(v=e_{a,b}\). 
Using the explicit formulas for~\(Z\), \(\widetilde{Z}\), \(\Corep{Y}\) and~\(Q\), we rewrite~\eqref{eq:Br-mang} as
\begin{equation}
 \label{eq:manag-aux-1}
    \zeta^{jl}\langle e_{i,j}\otimes e_{k,l} \mid \QuantExp(X)\mid e_{s,t}\otimes e_{a+s+t,b}\rangle 
  =\abs{q}^{l-b}\zeta^{-jb}\langle\conj{e_{s,t}}\otimes e_{k,l}\mid \widetilde{\BrMultunit}\mid \conj{e_{i,j}}\otimes e_{a,b}\rangle
\end{equation}
for all~\(a,b,i,j,k,l,s,t\in\Z\). To compute the left hand side of~\eqref{eq:manag-aux-1} we shall use the 
Fourier transform of \(\QuantExp\) restricted on the concentric circles~\(\abs{z}\in\abs{q}^{\Z}\)
\[
\QuantExp(z)=\sum_{m\in\Z}F_m(\abs{z})\Ph{z}^m,
\]
where \(z=\Ph{z}\Mod{z}\) and the coefficients \(F_m(\abs{q}^n)\) for~\(m,n\in\Z\) are real and satisfies
\begin{equation}
 \label{eqn:Fourier_trans}
F_m(\abs{q}^n)=(-\abs{q})^mF_{-m}(\abs{q}^{n-m}),
\end{equation}
see~\cite{B1992} or~\cite{W2001}*{Appendix A}. 

Now~\(e_{s,t}\otimes e_{a+s+t,b}\) is an eigenvector of~\(\Mod{X}\) with eigenvalue~\(\abs{q}^{a+t+1}\) and 
\(\Ph{X}^{m}\) acts on it by~\(e_{s,t}\otimes e_{a+s+t,b}\to \zeta^{-mt+\frac{m(m-1)}{2}}\Ph{q}^{m(a+t+1)} e_{s-m,t-m}\otimes e_{a+s+t-m,b+m}\).

Thus  
\begin{align*}
  &   \zeta^{jl}\langle e_{i,j}\otimes e_{k,l} \mid \QuantExp(X)\mid e_{s,t}\otimes e_{a+s+t,b}\rangle\\
  &= \sum_{m\in\Z} \zeta^{jl}\langle e_{i,j}\otimes e_{k,l} \mid \Ph{X}^m F_m(\Mod{X}) \mid e_{s,t}\otimes e_{a+s+t,b}\rangle \\
  &= \sum_{m\in\Z} \zeta^{jl-mt+\frac{m(m-1)}{2}}\Ph{q}^{m(a+t+1)} F_{m}(\abs{q}^{a+t+1})
        \delta_{i,s-m} \delta_{j,t-m} \delta_{k,a+s+t-m} \delta_{l,b+m}\\
  &=  \zeta^{jl-(l-b)t+\frac{(l-b)(l-b-1)}{2}}\Ph{q}^{(l-b)(a+t+1)}F_{l-b}(\abs{q}^{a+t+1})
        \delta_{i,s-l+b} \delta_{j,t-l+b} \delta_{k,a+s+t-l+b}\\
  &=  (-\abs{q})^{l-b} \zeta^{jl-(l-b)t+\frac{(l-b)(l-b-1)}{2}}\Ph{q}^{(l-b)(a+t+1)}F_{b-l}(\abs{q}^{a+t+1-l+b})\\
  &\qquad        \delta_{i,s-l+b} \delta_{j,t-l+b} \delta_{l-b,a+s+t-k}.
\end{align*}
The last expression is nonzero if and only if~\(l-b=s-i=t-j=a+s+t-k\). We also notice that 
\(\zeta=\Ph{q}^{2}\). These imply that \(a+s+t-k=k-a-i-j\), \(\delta_{l-b,a+s+t-k}=\delta_{l-b,k-a-i-j}
=\delta_{k+b-l,a+i+j}\) and \(\zeta^{jl-(l-b)t+\frac{(l-b)(l-b-1)}{2}}\Ph{q}^{(l-b)(a+t+1)}=\zeta^{jb}\Ph{q}^{(b-l)(s-k)}\). Therefore, 
the left hand side of~\eqref{eq:manag-aux-1} becomes
\begin{align*}
  &   \zeta^{jl}\langle e_{i,j}\otimes e_{k,l} \mid \QuantExp(X)\mid e_{s,t}\otimes e_{a+s+t,b}\rangle\\
  &= (-\abs{q})^{l-b} \zeta^{jb}\Ph{q}^{(b-l)(s-k)}F_{b-l}(\abs{q}^{k-s+1})\delta_{i,s-l+b} \delta_{j,t-l+b} \delta_{k+b-l,a+i+j}.
\end{align*}

Now we define an unbounded normal operator~\(\widetilde{X}=\Mod{\widetilde{X}}\Ph{\widetilde{X}}\) with 
spectrum~\(\Specn\) 
\[
\Mod{\widetilde{X}}(\conj{e_{ij}}\otimes e_{kl})\defeq |q|^{k-i+1}\ \conj{e_{ij}}\otimes e_{kl},
\quad
\Ph{\widetilde{X}}(\conj{e_{ij}}\otimes e_{kl})\defeq -\Ph{q}^{k-i}\ \conj{e_{i+1,\ j+1}}\otimes e_{k+1,\ l+1},
\]
and a unitary operator~\(\widetilde{\Corep{Y}} (\conj{e_{ij}}\otimes e_{kl})\defeq \conj{e_{ij}}\otimes e_{k+i+j,l}
\). We show that the unitary \(\widetilde{\BrMultunit}:=\QuantExp(\widetilde{X})^*\widetilde{Z}^2\widetilde{\Corep{Y}}\) 
 satisfies~\eqref{eq:manag-aux-1}:
\begin{align*}
 & \abs{q}^{l-b}\zeta^{-jb}\langle\conj{e_{s,t}}\otimes e_{k,l}\mid \widetilde{\BrMultunit}\mid \conj{e_{i,j}}\otimes e_{a,b}\rangle\\
 &= \abs{q}^{l-b}\zeta^{-jb}\langle\conj{e_{s,t}}\otimes e_{k,l}\mid \QuantExp(\widetilde{X})^*\widetilde{Z}^2\mid \conj{e_{i,j}}\otimes e_{a+i+j,b}\rangle \\
 &= \abs{q}^{l-b}\zeta^{jb}\langle\conj{e_{s,t}}\otimes e_{k,l}\mid \QuantExp(\widetilde{X})^*\mid \conj{e_{i,j}}\otimes e_{a+i+j,b}\rangle \\ 
 &= \sum_{m\in\Z} \abs{q}^{l-b}\zeta^{jb}\langle\conj{e_{s,t}}\otimes e_{k,l}\mid F_{m}(\Mod{\widetilde{X}})^{*} (\Ph{\widetilde{X}}^*)^{m} \mid \conj{e_{i,j}}\otimes e_{a+i+j,b}\rangle \\ 
 &=\sum_{m\in\Z} \abs{q}^{l-b}\zeta^{jb} F_{m}(\abs{q}^{k-s+1})\langle\conj{e_{s,t}}\otimes e_{k,l}\mid(\Ph{\widetilde{X}}^*)^{m} \mid \conj{e_{i,j}}\otimes e_{a+i+j,b}\rangle \\ 
 &=\sum_{m\in\Z} \abs{q}^{l-b}\zeta^{jb} F_{m}(\abs{q}^{k-s+1})(-1)^{m}\Ph{q}^{m(s-k)}\delta_{s+m,i}\delta_{t+m,j}\delta_{k+m,a+i+j}\delta_{l+m,b}\\
 &= (-\abs{q})^{l-b} \zeta^{jb}\Ph{q}^{(b-l)(s-k)} F_{b-l}(\abs{q}^{k-s+1})\delta_{i,s-l+b}\delta_{j,t-l+b}\delta_{k+b-l,a+i+j}.
\end{align*}

\section{Braided \(\textup{E}_{q}(2)\)-groups and bosonisation}
\label{sec:Eq2C*-algebra}
 Fix~\(q\in\C\) with~\(0<\Mod{q}<1\). Now we are going to describe the braided~\(\Cst\)\nb-quantum group 
 constructed from the manageable braided multiplicative 
 unitary~\(\BrMultunit\) in Theorem~\ref{theorem:main}. Hence we 
 shall use all the notations introduced in the previous section.

 Recall the~\(\Z\)\nb-action~\(\alpha\) on~\(\Contvin(\Specn)\) defined 
 by~\eqref{eq:Z-action}: \((\alpha_{m}f)(\lambda)\defeq f(q^{m}\lambda)\) for~\(m\in\Z\), \(f\in\Contvin(\Specn)\), 
 \(\lambda\in\Specn\). Let~\(\beta\) be the~\(\Z\)\nb-action on~\(\Contvin(\Specn)\) obtained by replacing \(q\) by~\(\Mod{q}\) in the 
 definition of~\(\alpha\)~\eqref{eq:Z-action}. Following a similar set of arguments used in \cite{KMRW2016}*{Theorem 2.3} we first prove the following 
 result.
  
 \begin{proposition}
 \label{Prop:isom-CstEq2}
 The \(\Cst\)\nb-algebras \(\Contvin(\Specn)\rtimes_{\alpha}\Z\) and~\(\Contvin(\Specn)\rtimes_{\beta}\Z\) are isomorphic.
\end{proposition}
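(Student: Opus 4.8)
The plan is to show that the two \(\Z\)\nb-actions underlying the two crossed products are topologically conjugate, so that the crossed products are isomorphic by transport of structure. Both actions are pullbacks of homeomorphisms of the spectrum \(\Specn\): writing \(M_{q^m}\colon\lambda\mapsto q^m\lambda\) and \(M_{\Mod{q}^m}\colon\lambda\mapsto\Mod{q}^m\lambda\) (each maps \(\Specn\) to itself, since it preserves the condition \(\Mod{\lambda}\in\Mod{q}^{\Z}\)), we have \(\alpha_m(f)=f\circ M_{q^m}\) and \(\beta_m(f)=f\circ M_{\Mod{q}^m}\). As multiplication by \(q^m\) differs from multiplication by \(\Mod{q}^m\) only by the rotation \(\Ph{q}^m\), the idea is to absorb this rotation into a modulus\nb-dependent rotation of \(\Specn\).

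First I would write \(\Specn\setminus\{0\}\) as the disjoint union of the circles \(C_k\defeq\{\lambda\colon\Mod{\lambda}=\Mod{q}^{k}\}\), \(k\in\Z\); since the radii \(\Mod{q}^{k}\) are discrete and accumulate only at \(0\), each \(C_k\) is clopen in \(\Specn\setminus\{0\}\). Define \(\psi\colon\Specn\to\Specn\) by \(\psi(0)\defeq0\) and \(\psi(\lambda)\defeq\Ph{q}^{-k}\lambda\) for \(\lambda\in C_k\); thus \(\psi\) rotates the \(k\)\nb-th circle by \(\Ph{q}^{-k}\) and preserves every modulus. Parametrising \(\lambda=\Mod{q}^{k}\Ph{\lambda}\) with \(\Ph{\lambda}\in\T\), a one\nb-line computation gives the conjugacy identity \(\psi\circ M_{q^m}\circ\psi^{-1}=M_{\Mod{q}^m}\) for all \(m\in\Z\).

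The main point to verify is that \(\psi\) is genuinely a homeomorphism of \(\Specn\), and the only delicate place is the origin. On \(\Specn\setminus\{0\}\) the map \(\psi\) is a rotation on each clopen circle \(C_k\), hence a homeomorphism there; at \(0\) the continuity of \(\psi\) and of \(\psi^{-1}\) holds because \(\psi\) preserves moduli, so it carries each basic neighbourhood \(\{\Mod{\lambda}<\varepsilon\}\cap\Specn\) of \(0\) onto itself, even though the rotations \(\Ph{q}^{-k}\) need not converge as \(k\to\infty\). Being a homeomorphism, \(\psi\) is proper and so induces a \(\Cst\)\nb-isomorphism \(\Theta\colon\Contvin(\Specn)\to\Contvin(\Specn)\), \(\Theta(f)\defeq f\circ\psi^{-1}\). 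I expect this topological verification at the accumulation point \(0\) to be the only real obstacle; everything else is formal.

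Finally I would read off equivariance from the conjugacy identity (in the form \(M_{q^m}\circ\psi^{-1}=\psi^{-1}\circ M_{\Mod{q}^m}\)): for all \(m\) and \(f\),
\[
  \Theta(\alpha_m(f))=f\circ M_{q^m}\circ\psi^{-1}=f\circ\psi^{-1}\circ M_{\Mod{q}^m}=\beta_m(\Theta(f)),
\]
so \(\Theta\circ\alpha_m=\beta_m\circ\Theta\). By the universal property of the crossed product, the equivariant isomorphism \(\Theta\) together with \(\Id_{\Z}\) extends to a \(\Cst\)\nb-isomorphism \(\Contvin(\Specn)\rtimes_{\alpha}\Z\congto\Contvin(\Specn)\rtimes_{\beta}\Z\) carrying the implementing unitary of \(\alpha\) to that of \(\beta\). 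Equivalently, and in the spirit of \cite{KMRW2016}*{Theorem 2.3}, this isomorphism can be described on the affiliated generators by sending the unitary \(v\) of the first crossed product to that of the second and \(n\) to \(\Theta(n)=n\circ\psi^{-1}\), the modulus\nb-twisted copy of \(n\).
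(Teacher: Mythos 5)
Your proof is correct and is essentially the paper's own argument: your homeomorphism \(\psi\) (rotation by \(\Ph{q}^{-k}\) on the circle of radius \(\Mod{q}^{k}\)) is exactly the paper's map \(g_{\theta}(\lambda)=\lambda e^{-i\theta\log_{\Mod{q}}\Mod{\lambda}}\), and your conjugacy identity \(\psi\circ M_{q^m}\circ\psi^{-1}=M_{\Mod{q}^m}\) is the paper's computation \(g_{\theta}(q\lambda)=\Mod{q}\,g_{\theta}(\lambda)\). The only cosmetic difference is that you phrase continuity at \(0\) and the passage to the crossed product via topological conjugacy and the universal property, whereas the paper phrases the same facts in the language of \(\Cst\)\nb-algebras generated by unbounded elements; both are fine.
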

\begin{proof}
Let \(q=\Mod{q}e^{i\theta}\) be the polar decomposition of \(q\). For any 
element~\(\lambda\in\Specn\), define
\[
g_{\theta}(\lambda)=
\begin{cases}
\lambda e^{-i\theta \log_{\Mod{q}} \Mod{\lambda}} & \lambda\neq 0,\\
0 & \lambda=0.
\end{cases}
\]
If~\(\lambda=\Mod{q}^{m}e^{i\psi}\neq 0\), then~\(g_{\theta}(\lambda)=\lambda e^{-im\theta}\). 
Thus \(g_{\theta}\) is continuous at all nonzero 
\(\lambda\in\Specn\). Let \(\{\lambda_k\}\) be a nonzero sequence in \(\Specn\) such that 
\(\lambda_k\to 0\) as \(k\to\infty\). Then
\[
g_{\theta}(\lambda_k)=\lambda_k e^{-i\theta \log_{\Mod{q}} |\lambda_k|} 
\qquad\text{for all~\(\Mod{\lambda_{k}}\neq 0\).}
\]
Since \(\Mod{e^{-i\theta \log_{\Mod{q}} \Mod{\lambda_k}}}\) is bounded for all~\(\Mod{\lambda_k}\neq 0\) we have
\(g_{\theta}(\lambda_{k})\to 0=g_{\theta}(0)\) as~\(k\to\infty\). Hence, \(g_{\theta}\) is continuous at all \(\lambda\in\Specn\). 
Also, \(g_{\theta}\) separates points of \(\Specn\) and \(\lim_{\lambda\to\infty} \Mod{g(\lambda)}=+\infty\). Therefore, 
\(g_{\theta}\) generates \(\Contvin(\Specn)\), see \cite{W1995}*{Section 3, Example 2}. 

Similarly, \(g_{-\theta}\) is also continuous and it is the inverse of~\(g_{\theta}\).
So, \(g_{\theta}\) is a homeomorphism of~\(\Specn\). Therefore, \(g_{\theta}(n)\) also generates~\(\Contvin(\Specn)\).
A simple computation gives
\[
  g_{\theta}(q\lambda)
=q\lambda e^{-i\theta \log_{\Mod{q}}\Mod{q\lambda}}
=qe^{-i\theta}\lambda e^{-i\theta \log_{\Mod{q}}\Mod{q\lambda}}
=\Mod{q}g_{\theta}(\lambda).
\]
Then using functional calculus and the commutation relation \eqref{eq:gen-E2} we obtain
\[
vg(n)v^*=g(qn)=\Mod{q}g_{\theta}(n).
\]
This gives a canonical~\(\Z\) action, denoted by~\(\beta\), on~\(\Contvin(\Specn)\) 
replacing \(q\) by~\(\Mod{q}\) in~\eqref{eq:Z-action} and the 
map~\(n\mapsto g_{\theta}(n)\) is~\(\Z\)\nb-equivariant; hence it extends to an 
isomorphism of crossed products~\(\Contvin(\Specn)\rtimes_{\alpha}\Z\) 
and~\(\Contvin(\Specn)\rtimes_{\beta}\Z\).
\end{proof} 

\subsection{The underlying \(\textup{C}^{*}\)-algebra}
For any closed densely defined operator~\(T\) acting on a Hilbert space 
\(\Hils[K]\) the \(z\)\nb-transform~\(z_{T}\in\Bound(\Hils[K])\) of~\(T\) is defined 
by~\(z_{T}\defeq T(1_{\Hils[K]}+T^*T)^{-\frac{1}{2}}\). Moreover, \(T\) is affiliated with a 
\(\Cst\)\nb-algebra~\(E\), denoted by~\(T\Aff E\), if~\(z_{T}\in\Mult(E)\) and 
\((1_{\Hils[K]}-z_{T}^{*}z_{T})^{\frac{1}{2}}E\) is dense in~\(E\). If~\(T\in\Bound(\Hils[K])\) and \(T\Aff E\), then 
\(T\in\Mult(E)\), see~\cite{W1991b}*{Example 1}.

Consider the crossed product~\(\Cst\)\nb-algebra~\(B=\Contvin(\Specn)\rtimes_{\alpha}\Z\).
Recall that~\(B\) is generated by~\(v\) and~\(n\) in the sense of~\cite{W1995}*{Definition 3.1}. 
In particular, this means~\(v\in\Mult(B)\) and \(n\Aff B\). Now
\(z_{vn}=vz_{n}\) implies~\(vn\Aff B\). Therefore, the operators 
\(v, vn\Aff B\). It is also easy to verify that the pair of operators~\((v,vn)\) 
satisfy the conditions of~\cite{W1995}*{Theorem 3.3}; hence~\(v,vn\) also 
generate~\(B\). 

Since \(vn\Aff B\) and~\(\QuantExp\in\Mult(\Contvin(\Specn))\) 
we get \(n^{-1}vP\otimes vn=X \Aff \Comp(\Hils[L])\otimes B\) 
and consequently~\(\QuantExp(X)\in\U(\Comp(\Hils[L])\otimes B)\). Also observe that 
\[
 \Corep{Y}=(1_{\Hils\otimes\Hils}\otimes v^{*})^{(\hat{N}\otimes 1_{\Hils}\dotplus 1_{\Hils}\otimes\hat{N})\otimes 1_{\Hils[L]}}
 \in\U(\Hils\otimes\Hils\otimes\Hils[L])\cong\U(\Hils[L]\otimes\Hils[L]),
\] 
where~\(\hat{N}\) is the self adjoint densely defined operator acting on 
\(\Hils\) defined in Section~\ref{sec:bcstqgpT}. It was already observed in the proof of 
Proposition~\ref{prop:BrMultunit} that~\(\Corep{Y}\) is a multiplicative unitary. 
It is also easy to verify that~\(\Corep{Y}\) is manageable 
with~\(Q_{\Hils[L]}=\Id_{\Hils[L]}\) and~\(\widetilde{\Corep{Y}}(\overline{e_{i,j}}\otimes e_{k,l}) 
\defeq \overline{e_{i,j}}\otimes e_{k+i+j,l}\), and also generates~\(\T\). 
Thus~\(\Corep{Y}\in\U(\Comp(\Hils[L])\otimes \Cont(\T))\subset 
\U(\Comp(\Hils[L])\otimes B)\), and consequently~\(\BrMultunit\in\U(\Comp(\Hils[L])\otimes B)\). 

On the other hand, 
\(B'=\{(\omega\otimes\Id_{\Hils[L]})\BrMultunit\mid \omega\in \Bound(\Hils[L])_{*}\}^{\textup{CLS}}\) 
is a~\(\Cst\)\nb-algebra. Therefore~\(B'\subseteq\Mult(B)\) and 
\begin{align*}
 B'B &=\{(\omega\otimes\Id_{\Comp(\Hils[L])})\BrMultunit (1_{\Hils[L]}\otimes b)\mid \omega\in\Bound(\Hils[L])_{*}, b\in B\}^\CLS \\
       &=\{(\omega\otimes\Id_{\Comp(\Hils[L])})\BrMultunit (m\otimes b)\mid m\in\Comp(\Hils[L]), \omega\in\Bound(\Hils[L])_{*}, b\in B\}^\CLS \\ 
       &=\{(\omega\otimes\Id_{\Comp(\Hils[L])})(m\otimes b)\mid m\in\Comp(\Hils[L]), \omega\in\Bound(\Hils[L])_{*}, b\in B\}^\CLS 
         = B.
\end{align*}
 
\begin{proposition}
 The \(\Cst\)-algebra \(B'\) coincides with \(B\).
\end{proposition}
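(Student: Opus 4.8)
The plan is to establish the two inclusions \(B\subseteq B'\) and \(B'\subseteq B\) separately. The preceding discussion already supplies \(B'\subseteq\Mult(B)\) and \(B'B=B\), but these two facts do not by themselves force equality: a \(\Cst\)\nb-subalgebra of \(\Mult(B)\) that acts nondegenerately on \(B\) may be strictly larger than \(B\). Hence one inclusion will be formal while the other must exploit the explicit shape of \(\BrMultunit\).

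First I would prove \(B\subseteq B'\) by showing that the generators of \(B\) are affiliated with \(B'\), namely \(v\Aff B'\) and \(vn\Aff B'\). These are extracted from the factorisation \(\BrMultunit=\QuantExp(X)\Corep{Y}\): the multiplicative unitary \(\Corep{Y}\) generates \(\Cont(\T)=\Cst(v)\), so appropriate slices of \(\BrMultunit\) return the \(z\)\nb-transform of \(v\), while the factor \(\QuantExp(X)\), whose second leg is built from \(vn\Aff B\), returns the \(z\)\nb-transform of \(vn\); concretely one evaluates \((\omega\otimes\Id_{\Comp(\Hils[L])})\BrMultunit\) on the rank\nb-one functionals \(\omega=\ket{e_{a,b}}\bra{e_{c,d}}\) and reads off the action formulas from Section~\ref{sec:bcstqgpT}. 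Since \(v,vn\) generate \(B\) and are affiliated with \(B'\), the \(\Cst\)\nb-algebra they generate lands in \(\Mult(B')\), i.e.\ \(B\subseteq\Mult(B')\). Combining this with \(B'B=B\) and with \(B'\) being an ideal in \(\Mult(B')\), I get \(B=B'B\subseteq B'\cdot\Mult(B')\subseteq B'\).

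The reverse inclusion \(B'\subseteq B\) is the main obstacle. Here it suffices to show that the slices land in \(B\) rather than merely in \(\Mult(B)\); equivalently, that \((\Comp(\Hils[L])\otimes 1_{B})\BrMultunit\subseteq\Comp(\Hils[L])\otimes B\), for then replacing \(\omega\) by \(\omega(\,\cdot\,k)\) with \(k\) running through an approximate unit of \(\Comp(\Hils[L])\) places \((\omega\otimes\Id_{\Comp(\Hils[L])})\BrMultunit\) in \(B\). The difficulty is that \(\Corep{Y}\) on its own only produces \(\Cont(\T)=\Cst(v)\subseteq\Mult(B)\), which is not contained in \(B\); what saves the day is that in the product \(\QuantExp(X)\Corep{Y}\) the unitary \(\QuantExp(X)\in\U(\Comp(\Hils[L])\otimes B)\) supplies genuine second\nb-leg content affiliated with \(B\), which absorbs the \(v\)\nb-content of \(\Corep{Y}\) into \(B=\Cst(v,vn)\). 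I expect to verify the displayed inclusion by expanding \(\QuantExp(X)=\sum_{m\in\Z}\Ph{X}^{m}F_{m}(\Mod{X})\) and using the explicit action of \(X\) in \eqref{eq:def_X}, exactly as in the manageability computation at the end of Section~\ref{sec:bcstqgpT}. With both inclusions established, \(B'=B\).
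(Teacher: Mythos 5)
Your overall skeleton is the right one and coincides with the paper's opening move: everything reduces to proving \(v\Aff B'\) and \(vn\Aff B'\), after which \(B\) embeds nondegenerately into \(\Mult(B')\) and the nondegeneracy relations finish the job. In fact the second half of your plan is superfluous: once \(BB'=B'\) is known, taking adjoints gives \(B'=(BB')^{*}=B'B\), and the computation \(B'B=B\) carried out just before the proposition already yields \(B'=B\). You therefore do not need the separate inclusion \((\Comp(\Hils[L])\otimes 1_{B})\BrMultunit\subseteq\Comp(\Hils[L])\otimes B\) — which, incidentally, is not a consequence of \(\BrMultunit\in\U(\Comp(\Hils[L])\otimes B)\) alone and would itself require an argument.

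The genuine gap is in the first half. You justify \(v,vn\Aff B'\) by saying that the factorisation \(\BrMultunit=\QuantExp(X)\Corep{Y}\) lets slices of \(\BrMultunit\) ``return'' the \(z\)\nb-transforms of \(v\) (from the factor \(\Corep{Y}\)) and of \(vn\) (from the factor \(\QuantExp(X)\)). This does not follow: \(B'\) is the closed span of slices of the \emph{product} \(\QuantExp(X)\Corep{Y}\), and slices of a product are not slices of its factors; there is no a priori reason why either \(\Corep{Y}\) or \(\QuantExp(\lambda X)\) should have its second leg in \(\Mult(B')\), and these are precisely the statements you need. The missing idea is the disentangling of the two factors via the pentagon-type identity of Proposition~\ref{prop:BrMultunit} for the whole family \(\BrMultunit^{\lambda}\): one forms
\(T'(\lambda)=(\BrMultunit^{\lambda})^{*}_{12}\BrMultunit_{23}\BrMultunit^{\lambda}_{12}\BrMultunit_{23}^{*}
=\QuantExp(\lambda n^{-1}vP\otimes P\otimes vn)\Corep{Y}_{13}\),
whose left-hand side manifestly lies in \(\U(\Comp(\Hils[L])\otimes\Comp(\Hils[L])\otimes B')\) because \(\BrMultunit_{23}\) does. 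Setting \(\lambda=0\) isolates \(\Corep{Y}_{13}\) and gives \(v\in\Mult(B')\); the strict continuity of \(\lambda\mapsto T'(\lambda)T'(0)^{*}=\QuantExp(\lambda n^{-1}vP\otimes P\otimes vn)\), combined with \cite{W1992a}*{Proposition 5.2} and \cite{R2016}*{Proposition 6.11}, then yields \(n^{-1}vP\otimes P\otimes vn\Aff\Comp(\Hils[L])\otimes\Comp(\Hils[L])\otimes B'\) and hence \(vn\Aff B'\). A brute-force evaluation of \((\omega\otimes\Id)\BrMultunit\) on rank-one functionals, as you propose, would at best produce a list of matrix coefficients; it does not by itself place \(z_{v}\) and \(z_{vn}\) in \(\Mult(B')\) together with the density condition that affiliation requires.
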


\begin{proof}
It is sufficient to show that the operators \(v\) and \(vn\) are affiliated with \(B'\). Because this will 
imply the faithful representation~\(\varphi\colon B\hookrightarrow\Bound(\Hils[L])\) in~\eqref{eq:con-gen-E2} 
is an element of~\(\Mor(B,B')\) and hence \(BB'=B'\).

We consider the following 
family of unitaries \(\{T'(\lambda)\}_{\lambda\in\Specn}\) on~\(\U(\Hils[L]\otimes\Hils[L]\otimes\Hils[L])\) defined by 
 \begin{align}
  \label{eqn:T-prime-Gamma}
  T'(\lambda)&=\QuantExp(\lambda n^{-1}vP\otimes P\otimes vn)\Corep{Y}_{13}.
 \end{align}
 Using~\eqref{eq:braiding} and~\eqref{eq:con-gen-E2} we compute 
  \begin{align}
   \label{eq:j_1vn}
      \Braiding{}{}(vn\otimes 1_{\Hils[L]})\Braiding{}{}^{*} e_{i,j}\otimes e_{k,l} 
      = \zeta^{lj} \Braiding{}{}(vn\otimes 1_{\Hils[L]}) e_{k,l}\otimes e_{i,j}
    &=\zeta^{lj} q^{k} \Braiding{}{}e_{k-1,l+1}\otimes e_{i,j} \nonumber \\
    &=(P\otimes vn) e_{i,j}\otimes e_{k,l}.
  \end{align}
 Combining this with Proposition~\ref{prop:BrMultunit} we get
 \[
    (\BrMultunit^{\lambda})^{*}_{12}\BrMultunit_{23}\BrMultunit^{\lambda}_{12}\BrMultunit_{23}^{*}
   =\Braiding{}{}_{23}\BrMultunit^{\lambda}_{12}\Braiding{}{}^{*}_{23}
   =\QuantExp(\lambda n^{-1}vP\otimes P\otimes vn)\Corep{Y}_{13}
   =T'(\lambda).
 \] 
Expression at the extreme left of the above chain of equalities belongs to \(\U(\Comp(\Hils[L])\otimes\Comp(\Hils[L])\otimes B')\), 
so is \(\QuantExp(\lambda n^{-1}vP\otimes P\otimes vn)\Corep{Y}_{13}\) for all \(\lambda\in\Specn\). Now 
\(\Corep{Y}\in\U(\Hils[L]\otimes\Hils[L])\) and \(X\Aff \Comp(\Hils[L])\otimes\Comp(\Hils[L])\) with~\(\textup{Spec}(X)=\Specn\). 
Then, by virtue of~\cite{W1992a}*{Proposition 5.2}, the map~\(\Specn\ni\lambda\to\QuantExp(\lambda X)\in \Mult(\Comp(\Hils[L]) 
\otimes\Comp(\Hils[L]))\) is strictly continuous and so is the map~\(\lambda\to\BrMultunit^{\lambda}=\BrMultunit(\lambda X)\Corep{Y}\). 
Therefore, \(\{T'(\lambda)\}_{\lambda\in\Specn}\) is a strictly continuous family of elements of \(\U(\Comp(\Hils[L])\otimes\Comp(\Hils[L])\otimes B')\). 
In particular, for \(\lambda=0\) we have \(T'(0)=\Corep{Y}_{13}\); hence \(\Corep{Y}\in\U(\Comp(\Hils[L])\otimes B')\). Now 
the slices \((\omega\otimes\Id_{\Comp(\Hils[L])})\Corep{Y}\) for~\(\omega\in\Bound(\Hils[L])_{*}\) are dense in 
\(\Cont(\T)\) and~\(v\in\Cont(\T)\) imply~\(v\in\Mult(B')\). Also the map 
\(\Specn\ni\lambda\to \QuantExp(\lambda n^{-1}vP\otimes P\otimes vn)
=T'(\lambda)T'(0)^{*}\in\U(\Comp(\Hils[L])\otimes\Comp(\Hils[L])\otimes B')\) is strictly continuous. 
Finally, combining \cite{W1992a}*{Proposition 5.2} and \cite{R2016}*{Proposition 6.11} give  
\((n^{-1}vP\otimes P\otimes vn) \Aff \Comp(\Hils[L])\otimes\Comp(\Hils[L])\otimes B'\) 
and consequently~\(vn\Aff B'\). 
\end{proof}

\subsection{Construction of the comultiplication map}
 Recall the faithful representation~\(\varphi\colon B\to\Bound(\Hils)\) in~\eqref{eq:con-gen-E2}, 
 the unitary operators~\(z\in\U(\Hils)\) and~\(\Corep{U}\in\U(\Hils[L]\otimes\Hils)\) from the 
 Section~\ref{sec:bcstqgpT}. Also recall the actions of~\(z\in\U(\Hils)\) and~\(\Corep{U}\in\U(\Hils[L]\otimes\Hils)\) 
 on the canonical basis elements~\(e_{p}\) and~\(e_{i,j}\otimes e_{p}\)  are defined 
 by~\(z(e_{p})=e_{p+1}\) and~\(\Corep{U}(e_{i,j}\otimes e_{p})=e_{i,j}\otimes e_{p+j}\), 
 respectively. We observe that~\(\Corep{U}\) 
 commutes with~\(v\otimes 1_{\Hils}\) and 
 \[
    \Corep{U}(n\otimes 1_{\Hils})\Corep{U}^{*}(e_{i,j}\otimes e_{p})
  =q^{i} e_{i,j+1}\otimes e_{p+1}
  =n\otimes z (e_{i,j}\otimes e_{p}).
 \]
 Therefore the map~\(\delta\colon B\to B\otimes \Cont(\T)\) given by 
\begin{equation}
 \label{eq:T-act_Eq2}
  \delta(v)\defeq v\otimes 1_{\Cont(\T)} ,
  \qquad
  \delta(n)=n\otimes z.
\end{equation}
is a well defined action of~\(\T\) on~\(B\). Consequently,~\((B,\delta)\) is an object of~\(\Cstcat(\T)\) and \((\Corep{U},\varphi)\) 
 is a faithful covariant representation of~\((B,\delta,\T)\) 
 on~\(\Hils[L]\). Next we define the 
 canonical embeddings~\(j_{1}, j_{2}\in\Mor(B,B\boxtimes_{\Rmat}B)\) 
 in~\eqref{eq:def_j1_j2_Rmat} using the braiding unitary~\(\Braiding{}{}\) 
 defined in~\eqref{eq:braiding}. On the generators~\(v\) and~\(n\)
 
 \begin{equation}
 \label{eq:j_on_E2}
 \begin{aligned}
  j_{1}(v) &\defeq v\otimes 1_{\Hils[L]},
  &\quad
   j_{2}(v) &\defeq \Braiding{}{}(v\otimes 1_{\Hils[L]})\Braiding{}{}^{*}=Z(1_{\Hils[L]}\otimes v)Z^{*}=1_{\Hils[L]}\otimes v,\\
  j_{1}(n) &\defeq n\otimes 1_{\Hils[L]},
  &\quad
   j_{2}(n) &\defeq \Braiding{}{}(n\otimes 1_{\Hils[L]})\Braiding{}{}^{*}=Z(1_{\Hils[L]}\otimes n)Z^{*}=P\otimes n, 
 \end{aligned}
 \end{equation}
 where~\(P\in\U(\Hils[L])\) is defined in~\eqref{eq:P-def}.
 Observe that~\(j_{1}(n), j_{2}(n)\Aff B\boxtimes_{\Rmat}B\).
 
 Define~\(\Comult[B](b)\defeq \BrMultunit (b\otimes 1_{\Hils[L]})\BrMultunit ^{*}\) for all~\(b\in B\). 
 By Theorem~\ref{the:BQgrp-quasitriag}, \(\Bialg{B}\) is the braided 
 \(\Cst\)\nb-quantum group over~\(\T\) generated by~\(\BrMultunit\). 

 Let us compute~\(\Comult[B]\) on the generators~\(v,n\).  Since \(\Corep{Y}\) generates~\(\T\) then \(\Corep{Y}(v\otimes 1_{\Hils[L]})\Corep{Y}^{*}
 =v\otimes v\). Also \(v\otimes v(e_{i,j}\otimes e_{k,l})=e_{i-1,j}\otimes e_{k-1,l}\) shows that 
 it commutes with both the operators~\(\Mod{X}\) and~\(\Ph{X}\) in~\eqref{eq:def_X}, so with 
 \(X\). Therefore,  
 \[
      \Comult[B](v)\defeq \BrMultunit (v\otimes 1_{\Hils[L]})\BrMultunit^{*}
   = \QuantExp(X)\Corep{Y}(v\otimes 1_{\Hils[L]})\Corep{Y}^{*}\QuantExp(X)^{*}
   =v\otimes v
   =j_{1}(v)j_{2}(v).
 \] 

 A simple computation shows that~\(R\defeq j_{1}(n)j_{2}(v^{*})=n\otimes v^{*}\), 
 \(S\defeq j_{1}(v)j_{2}(n)=vP\otimes n\), 
 and 
 \begin{equation}
  \label{eq:commRS}
   X=R^{-1}S=(j_1(n)j_2(v^*))^{-1}j_1(v)j_2(n)=n^{-1}vP\otimes vn. 
 \end{equation}
 Then~\(R\) and~\(S\) are unbounded densely defined  normal operators 
 and satisfy the commutation relations~\cite{W1992a}*{(0.1)}. Since~\(X=R^{-1}S\) is 
 also a normal normal operator with spectrum~\(\Specn\), \cite{W1992a}*{Theorem 2.1-2.2} apply and show that 
 \(R\dotplus S\) is normal with spectrum~\(\Specn\) and 
 \[
     \QuantExp(X) (n\otimes v^{*}) \QuantExp(X)^{*}
   =\QuantExp(R^{-1}S)R\QuantExp(R^{-1}S)^{*}
   =R\dotplus S
   =n\otimes v^{*}\dotplus vP\otimes n.
 \] 
The following computation 
 \begin{equation}
  \label{eq:V-n-comm}
      \Corep{Y}(n\otimes 1_{\Hils[L]})\Corep{Y}^{*} (e_{i,j}\otimes e_{k,l}) 
   = q^{i} e_{i,j+1}\otimes e_{k+1,l}
   = n\otimes v^{*} (e_{i,j}\otimes e_{k,l}).
 \end{equation}
 gives~\(\Comult[B](n)\defeq \BrMultunit (n\otimes 1_{\Hils[L]}) \BrMultunit^{*}= j_{1}(n)j_{2}(v^{*})\dotplus j_{1}(v)j_{2}(n)\). Now, 
 \(n\Aff B\) and~\(\Comult[B]\in\Mor(B,B\boxtimes_{\Rmat}B)\) imply~\(\Comult[B](n)\Aff B\boxtimes_{\Rmat}B\). Thus 
 \(\Comult[B]\) in~\eqref{eq:comult-bE2} extends to a coassociative 
 element of~\(\Mor^{\T}(B, B\boxtimes_{\Rmat}B)\) and satisfies the cancellation conditions. 
 In summary, we obtain the following result.
 \begin{theorem}
  \label{the:BrdqE2}
  The pair~\(\Bialg{B}\) is the braided \(\Cst\)\nb-quantum group over~\(\T\) 
  generated by the braided multiplicative unitary~\(\BrMultunit\). 
 \end{theorem}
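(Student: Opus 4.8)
The plan is to read this theorem as the capstone that assembles the preparatory results of the section with a single application of Theorem~\ref{the:BQgrp-quasitriag}; indeed the statement is explicitly flagged as a summary. The two hypotheses of that theorem are already in hand: $\T$ is a regular quantum group by Example~\ref{ex:lcq-as-qg} (every locally compact group generates a regular quantum group), and Theorem~\ref{theorem:main} supplies $\BrMultunit=\QuantExp(X)\Corep{Y}$ as a manageable braided multiplicative unitary over $\T$ relative to $(\Corep{U},\Rmat)$. Feeding these into Theorem~\ref{the:BQgrp-quasitriag} then yields, with no further work, a braided $\Cst$-quantum group over $\T$ whose underlying algebra is the slice algebra $\{(\omega\otimes\Id_{\Comp(\Hils[L])})\BrMultunit\mid\omega\in\Bound(\Hils[L])_{*}\}^\CLS$ and whose comultiplication $b\mapsto\BrMultunit(b\otimes 1_{\Hils[L]})\BrMultunit^{*}$ is automatically coassociative, $\T$-equivariant, and satisfies the cancellation conditions.

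What remains is to match this abstract output with the concrete $\qE$ data. First I would identify the slice algebra with $B=\Contvin(\Specn)\rtimes_{\alpha}\Z$; this is exactly the Proposition proved above identifying $B'$ with $B$, where $v$ and $vn$ are shown to be affiliated with the slice algebra and Woronowicz's generation theorem is invoked. For the comultiplication one computes on the generators. On $v$: since $\Corep{Y}$ generates $\T$ one has $\Corep{Y}(v\otimes 1_{\Hils[L]})\Corep{Y}^{*}=v\otimes v$, and $v\otimes v$ commutes with both $\Mod{X}$ and $\Ph{X}$, hence with $\QuantExp(X)$, giving $\Comult[B](v)=v\otimes v=j_{1}(v)j_{2}(v)$.

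The delicate point is $\Comult[B](n)$. Here I would set $R=j_{1}(n)j_{2}(v^{*})=n\otimes v^{*}$ and $S=j_{1}(v)j_{2}(n)=vP\otimes n$, note that $X=R^{-1}S$, verify the commutation relations and normality of the pair $(R,S)$ together with normality of $R^{-1}S$ with spectrum $\Specn$, and invoke \cite{W1992a}*{Theorem 2.1--2.2} to conclude that $R\dotplus S$ is normal and $\QuantExp(X)(n\otimes v^{*})\QuantExp(X)^{*}=R\dotplus S$; combined with $\Corep{Y}(n\otimes 1_{\Hils[L]})\Corep{Y}^{*}=n\otimes v^{*}$ this produces $\Comult[B](n)=j_{1}(n)j_{2}(v^{*})\dotplus j_{1}(v)j_{2}(n)$, matching \eqref{eq:comult-bE2}. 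Affiliation $\Comult[B](n)\Aff B\boxtimes_{\Rmat}B$ then follows from $n\Aff B$ together with $\Comult[B]\in\Mor^{\T}(B,B\boxtimes_{\Rmat}B)$. The main obstacle I anticipate is precisely this analytic step for $n$: checking that the hypotheses of Woronowicz's addition theorem genuinely hold for $(R,S)$, since the closability and normality of $R\dotplus S$ and the computation of its spectrum are not formal manipulations. Everything else is bookkeeping that the paragraphs preceding the statement have effectively already carried out.
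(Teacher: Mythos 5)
Your proposal follows the paper's own argument essentially verbatim: apply Theorem~\ref{the:BQgrp-quasitriag} to the manageable braided multiplicative unitary $\BrMultunit$ of Theorem~\ref{theorem:main} over the regular quantum group $\T$, identify the slice algebra with $B$ via the proposition that $B'=B$, and compute $\Comult[B]$ on the generators $v$ and $n$ using the decomposition $R=n\otimes v^{*}$, $S=vP\otimes n$, $X=R^{-1}S$ together with Woronowicz's operator equalities from \cite{W1992a}. The delicate point you flag (verifying the hypotheses of the addition theorem for the pair $(R,S)$) is exactly the step the paper handles the same way, so the proposal is correct and matches the paper's proof.
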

 
\begin{remark}
 \label{rem:closed-qnt}
Define~\(p\in\Mor(B,B)\) by~\(p(v)=v\) and~\(p(n)=0\). Existence of~\(p\) is guaranteed by the universal property of~\(B\). Clearly, \(\textup{Im}(p)=\Cont(\T)\). Furthermore, the restriction of the action~\(\delta\) of~\(\T\) on~\(\Cont(\T)\) is trivial and \eqref{eq:j_on_E2} implies \(\Comult[B](v)=v\otimes v=\Comult[\Cont(\T)](v)\). This shows that \(p\in\Mor^{\T}(B,\Cont(\T))\) and satisfies~\((p\boxtimes_{\Rmat} p)\Comult[B](v)=\Comult[\Cont(\T)](p(v))\). Hence, \(\T\) is a \emph{closed quantum subgroup} of braided~\(\qE\).
 \end{remark}

 If \(q\in\R\), then~\(\zeta=1\) which implies 
 the braiding operator~\(\Braiding{}{}=\Flip\) in~\eqref{eq:braiding}.
 Consequently, \(\BrMultunit\) is an ordinary manageable multiplicative unitary, 
 \(\boxtimes_{\Rmat}\) coincides with~\(\otimes\) and ~\eqref{eq:comult-bE2} also coincides 
 with~\eqref{eq:comult-E2}. In conclusion, we have
 \begin{corollary}
  \label{eq:Eq2Wor}
   For real deformation parameters~\(0<q<1\),  the deformation 
   \(\Bialg{B}\) coincide with Woronowicz's~\(\qE\) groups.
 \end{corollary}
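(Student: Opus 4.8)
The plan is to observe that every braided feature entering the construction of~\(\Bialg{B}\) in Theorem~\ref{the:BrdqE2} degenerates to its classical counterpart precisely when~\(q\) is real, so that the construction returns Woronowicz's data verbatim. First I would record the numerical fact that for real~\(0<q<1\) one has~\(\zeta=q/\bar q=1\). Feeding this into~\eqref{eq:braiding} gives~\(\Braiding{}{}\,e_{i,j}\otimes e_{k,l}=e_{k,l}\otimes e_{i,j}\), so the braiding unitary equals the tensor flip~\(\Flip\), while the auxiliary operators~\(Z\) of~\eqref{eq:braiding} and~\(P\) of~\eqref{eq:P-def} become trivial. Because the monoidal product~\(\boxtimes_{\Rmat}\) and its canonical embeddings~\(j_1,j_2\) are defined from the braiding in~\eqref{eq:def_j1_j2_Rmat}, the identity~\(\Braiding{}{}=\Flip\) forces~\(j_1(b)=b\otimes 1_{\Hils[L]}\) and~\(j_2(b)=1_{\Hils[L]}\otimes b\); this matches~\eqref{eq:j_on_E2} once~\(P\) is trivial. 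Hence~\(B\boxtimes_{\Rmat}B\) coincides with the minimal tensor product~\(B\otimes B\).

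Next I would check that~\(\BrMultunit=\QuantExp(X)\Corep{Y}\) is an ordinary manageable multiplicative unitary. With~\(\Braiding{}{}=\Flip\) the braided pentagon equation~\eqref{eq:Braided-pentagon} collapses to the ordinary pentagon equation~\eqref{eq:pentagon}, and the manageability identity~\eqref{eq:Br-mang} reduces to~\eqref{eq:Multunit_manageable}: indeed~\(Z=1\) by~\eqref{eq:braiding}, and the formula~\(\widetilde Z\,\conj{e_{i,j}}\otimes e_{k,l}=\zeta^{jl}\conj{e_{i,j}}\otimes e_{k,l}\) shows~\(\widetilde Z=1\) as well, so no residual twist survives. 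Thus~\(\BrMultunit\) falls squarely within Woronowicz's theory of manageable multiplicative unitaries~\cite{W1996}*{Theorem 1.5}.

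Finally I would compare the resulting quantum groups. Substituting~\(j_1(b)=b\otimes 1_{\Hils[L]}\) and~\(j_2(b)=1_{\Hils[L]}\otimes b\) into~\eqref{eq:comult-bE2} gives~\(\Comult[B](v)=(v\otimes 1_{\Hils[L]})(1_{\Hils[L]}\otimes v)=v\otimes v\) and~\(\Comult[B](n)=(v\otimes 1_{\Hils[L]})(1_{\Hils[L]}\otimes n)\dotplus(n\otimes 1_{\Hils[L]})(1_{\Hils[L]}\otimes v^{*})=v\otimes n\dotplus n\otimes v^{*}\), which is precisely~\eqref{eq:comult-E2}. The underlying~\(\Cst\)\nb-algebra is untouched: \(B=\Contvin(\Specn)\rtimes_{\alpha}\Z\) is generated by the pair~\((v,n)\) of~\eqref{eq:gen-E2}, and for real~\(q\) the spectral set~\(\Specn\) is exactly the one occurring in Woronowicz's~\(\qE\). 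Since a manageable multiplicative unitary determines its~\(\Cst\)\nb-quantum group uniquely and both the algebra and the comultiplication agree, I conclude~\(\Bialg{B}=\qE\). I do not anticipate a genuine obstacle: the braiding is the sole feature separating the complex\nb-parameter construction from Woronowicz's, and it trivialises exactly at~\(q=\bar q\); the only point meriting care is to confirm that both~\(Z\) and~\(\widetilde Z\) collapse to the identity, so that the full manageability datum---and not merely the pentagon relation---reverts to the classical one.
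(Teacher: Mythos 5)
Your proposal is correct and follows essentially the same route as the paper, which simply observes that for real~\(q\) one has \(\zeta=1\), hence \(\Braiding{}{}=\Flip\), \(\boxtimes_{\Rmat}\) reduces to \(\otimes\), \(\BrMultunit\) becomes an ordinary manageable multiplicative unitary, and \eqref{eq:comult-bE2} collapses to \eqref{eq:comult-E2}. Your additional checks that \(Z\), \(\widetilde Z\) and \(P\) all become the identity are exactly the details the paper leaves implicit.
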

 
 \subsection{The bosonisation}  
  The \(\Rmattxt\)\nb-matrix in~\eqref{eq:bichar} corresponds to the group homomorphism~\(\Z\ni m\to\Rmat(\cdot,m)\in\widehat{\Z}\cong\T\) and it induces unique representation~\(\Corep{V}\in\U(\Comp(\Hils[L])\otimes\Contvin(\Z))\subset\U(\Hils[L]\otimes\Hils)\) of~\(\Z\) on~\(\Hils[L]\) defined by~\(\Corep{V}(e_{i,j}\otimes e_{p})=\zeta^{-pj} e_{i,j}\otimes e_{p}\) satisfying~\eqref{eq:du-corep-R-mat}. Define 
  \(\DuCorep{V}\in\U(\Contvin(\Z)\otimes\Comp(\Hils[L]))\subset\U(\Hils\otimes\Hils[L])\) by 
  \begin{equation}
   \label{eq:ducorep-T}
   \DuCorep{V} e_{p}\otimes e_{i,j}= \zeta^{pj}e_{p}\otimes e_{i,j}.
  \end{equation}
  By virtue of Theorem~\ref{the:BQgrp-quasitriag} the managable braided multiplicative unitary \(\BrMultunit\in\U(\Hils[L]\otimes\Hils[L])\) constructed in Theorem~\ref{theorem:main} over~\(\T\) relative to~\((\corep{U},\Rmat)\) is also a manageable braided multiplicative unitary over~\(\T\) in the sense of~\cite{MRW2017}*{Definition 3.2}.

  According to~\cite{MRW2017}*{Theorem 3.7 \& 3.8}   
  \begin{equation}
   \label{eq:MU-boson}
   \mathcal{W}\defeq \Multunit_{13}\Corep{U}_{23}\DuCorep{V}^{*}_{34}\BrMultunit_{24}\DuCorep{V}_{34} 
   \qquad\text{in~\(\U(\Hils\otimes\Hils[L]\otimes\Hils\otimes\Hils[L])\)}
  \end{equation}
  is a manageable multiplicative unitary, where~\(\Corep{U}\) is the 
  concrete realisation of~\(\corep{U}\) on~\(\Hils[L]\otimes\Hils\). 
  Let~\(\mathcal{W}\) generates the \(\Cst\)\nb-quantum group~\(\Qgrp{H}{C}\).  
  
  The \(\Cst\)\nb-algebra~\(C\defeq \Cont(\T)\boxtimes_{\Rmat}B=j_{\Cont(\T)}(\Cont(\T))j_{B}(B)\subset\Bound(\Hils\otimes\Hils[L])\) where, \(j_{\Cont(\T)}(z)=z\otimes 1_{\Hils[L]}\) and 
  \(j_{B}(b)=\DuCorep{V}^{*}(1_{\Hils}\otimes b)\DuCorep{V}\), where~\(z\) is the unitary generator of~\(\Cont(\T)\) and~\(b\in B\subset\Bound(\Hils[L])\). 
The concrete realisations of~\(v, n, z\) give
  \begin{align*}
    j_{B}(v) &=1_{\Hils}\otimes v, 
    &\quad
    j_{\Cont(\T)}(z)j_{B}(v) &=j_{B}(v) j_{\Cont(\T)}(z),\\
   j_{B}(n) &= P' \otimes n,
   &\quad
    j_{\Cont(\T)}(z)j_{B}(n) &=\zeta j_{B}(n) j_{\Cont(\T)}(z),
  \end{align*}
  where~\(P'\in\U(\Hils)\) defined by~\(P' e_{p}=\zeta^{-p}e_{p}\).
  Equivalently, \(C=B\rtimes\Z\) with respect to the action of \(\Z\) on~\(B\) 
  given by \((k,v)\to v\) and~\((k,n)\to \zeta^{k}n\) for all~\(k\in\Z\).
  
   Recall the identification~\(\Hils[L]\cong\Hils\otimes\Hils\). We have already observed 
   \(\Corep{U}=\Multunit_{23}\in\U(\Hils^{\otimes 3})\). Using these we 
   rewrite~\eqref{eq:MU-boson} as
   \begin{equation}
    \label{eq:MU-boson-1}
     \mathcal{W}=\Multunit_{14}\Multunit_{34}\DuCorep{V}^{*}_{456}\QuantExp(n^{-1}vP\otimes vn)_{2356}\Multunit_{25}\Multunit_{35}\DuCorep{V}_{456}
   \quad\text{in~\(\U(\Hils^{\otimes 6})\).} 
  \end{equation}
  Since~\(\DuCorep{V}_{456}\) acts trivially on the fifth leg it commutes with~\(\Multunit_{25}\Multunit_{35}\). Moreover, 
  \[
    \DuCorep{V}^{*}(1_{\Hils}\otimes vn) \DuCorep{V}= 
    j_{B}(vn)=P'\otimes vn.
  \]
  Thus, \eqref{eq:MU-boson-1} gets simplified as 
  \[
   \mathcal{W}=\Multunit_{14}\Multunit_{34}\QuantExp(n^{-1}vP\otimes P'\otimes vn)_{23456}\Multunit_{25}\Multunit_{35}
   \quad\text{in~\(\U(\Hils^{\otimes 6})\).} 
  \]
  and the comultiplication map~\(\Comult[C]\in\Mor(C,C\otimes C)\) is given by 
  \(\Comult[C](c)=\mathcal{W}(c\otimes 1_{\Hils^{\otimes 3}})\mathcal{W}^{*}\) for all~\(c\in C\). Clearly, 
  \[
   \Comult[C](j_{\Cont(\T)}(z)) = j_{\Cont(\T)}(z)\otimes j_{\Cont(\T)}(z), 
   \quad \Comult[C](j_{B}(v)) = j_{B}(v)\otimes j_{B}(v).
 \]
 Next we compute~\(\Comult[C](j_{B}(n))\). By \eqref{eq:V-n-comm} we have
 \[
   \Multunit_{25}\Multunit_{35}(j_{B}(n)\otimes 1_{\Hils^{\otimes 3}})
   \Multunit^{*}_{35}\Multunit^{*}_{25}
   =P'\otimes n\otimes 1_{\Hils}\otimes v^{*},
 \]
  Furthermore, a variant of~\eqref{eq:commRS} gives 
  \begin{align*}
   & \QuantExp(n^{-1}vP\otimes P'\otimes vn)_{23456}(P'\otimes n\otimes 1_{\Hils}\otimes v^{*})\QuantExp(n^{-1}vP\otimes P'\otimes vn)_{23456}^{*}\\
   &= P'\otimes n\otimes 1_{\Hils}\otimes v^{*}\dotplus P'\otimes vP\otimes P' \otimes n.
  \end{align*}
  Now using the concrete realisation of the operators~\(v,n,P,P',\Multunit\) we compute
  \begin{align*}
   \Multunit_{14}\Multunit_{34}(P'\otimes n\otimes 1_{\Hils}) 
   e_{p}\otimes e_{i,j}\otimes e_{s} 
   &=\zeta^{-p} q^{i} e_{p}\otimes e_{i,j+1}\otimes e_{s+j+1+p}\\
   &=((P'\otimes n\otimes z)\Multunit_{14}\Multunit_{34})e_{p}\otimes e_{i,j}\otimes e_{s},
  \end{align*}
  and
  \begin{align*}
  \Multunit_{14}\Multunit_{34}(P'\otimes vP\otimes P') e_{p}\otimes e_{i,j}\otimes e_{q} 
  &=\zeta^{-p-s-j}e_{p}\otimes e_{i-1,j}\otimes e_{s+p+j}\\
  &= ((1_{\Hils}\otimes v\otimes P')\Multunit_{14}\Multunit_{34})e_{p}\otimes e_{i,j}\otimes e_{s}.
  \end{align*}
 Combining the last four calculations we obtain
 \[
   \Comult[C](j_{B}(n))
   =j_{B}(n)\otimes j_{\Cont(\T)}(z)j_{B}(v^{*}) \dotplus  j_{B}(v)\otimes j_{B}(n).
 \]
Summarising, we have the bosonisation of~\(\qE\).
\begin{theorem}
 \label{the:boson}
 Let \(C\) be the universal \(\Cst\)\nb-algebra generated by the the unitaries~\(\tilde{z},\tilde{v}\) and the normal operator~\(\tilde{n}\) with~\(\Sp(\tilde{n})=\Specn\) subject to the commutation relations 
 \[
   \tilde{z}\tilde{v}\tilde{z}^{*}=\tilde{v},
   \quad 
   \tilde{z}\tilde{n}\tilde{z}^{*}=\zeta\tilde{n},
   \quad
   \tilde{v}\tilde{n}\tilde{v}^{*}=q\tilde{n}.
 \]
 There exists a unique~\(\Comult[C]\in\Mor(C,C\otimes C)\) such that
 \[
  \Comult[C](\tilde{z})=\tilde{z}\otimes\tilde{z}, 
  \quad 
  \Comult[C](\tilde{v})=\tilde{v}\otimes\tilde{v},
  \quad 
  \Comult[C](\tilde{n})=\tilde{n}\otimes\tilde{z}\tilde{v}^{*}\dotplus \tilde{v}\otimes\tilde{n}
 \] 
 and~\(\Qgrp{H}{C}\) is a \(\Cst\)\nb-quantum group. Moreover, there exists an idempotent Hopf~\Star{}homomorphism~\(f\in\Mor(C,C)\) with~\(f(\tilde{z})=\tilde{z}\), 
 \(f(\tilde{v})=1_{C}\) and~\(f(\tilde{n})=0\). Its image is the copy of \(\Cont(\T)\) generated 
 by~\(\tilde{z}\) as a closed quantum subgroup of~\(\G[H]\) and its kernel is the copy of~\(B\) generated by~\(\tilde{v},\tilde{n}\) as the braided~\(\qE\) group over~\(\T\).
\end{theorem}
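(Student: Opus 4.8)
Since~\cite{MRW2017}*{Theorem 3.7 \& 3.8} guarantees that the operator~\(\mathcal{W}\) of~\eqref{eq:MU-boson} is a manageable multiplicative unitary, \cite{W1996}*{Theorem 1.5} produces in one stroke the \(\Cst\)\nb-quantum group~\(\Qgrp{H}{C}\) with~\(\Comult[C](c)=\mathcal{W}(c\otimes 1)\mathcal{W}^{*}\). This already delivers the existence of~\(\Comult[C]\in\Mor(C,C\otimes C)\) together with all the quantum group axioms, so the entire proof is an exercise in reading off the structure of~\(\Qgrp{H}{C}\) from the explicit computations performed just above the statement. The plan is to (i) identify~\(C\) with the stated universal \(\Cst\)\nb-algebra, (ii) record the values of~\(\Comult[C]\) on the generators and argue uniqueness, and (iii) construct and analyse the idempotent~\(f\).

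For~(i) I would use the identification~\(C=\Cont(\T)\boxtimes_{\Rmat}B\cong B\rtimes\Z\) established above, where~\(\Z\) acts on~\(B\) by~\(v\mapsto v\), \(n\mapsto\zeta n\), and set~\(\tilde{z}\defeq j_{\Cont(\T)}(z)\), \(\tilde{v}\defeq j_{B}(v)\), \(\tilde{n}\defeq j_{B}(n)\). The three relations are exactly~\(\tilde{z}\tilde{v}\tilde{z}^{*}=\tilde{v}\) and~\(\tilde{z}\tilde{n}\tilde{z}^{*}=\zeta\tilde{n}\) (the crossed product relations implementing the~\(\Z\)\nb-action) together with~\(\tilde{v}\tilde{n}\tilde{v}^{*}=j_{B}(vnv^{*})=q\tilde{n}\) inherited from~\eqref{eq:gen-E2} inside~\(B\), while~\(\Sp(\tilde{n})=\Specn\) is inherited from~\(n\). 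Because~\(B\) is the universal \(\Cst\)\nb-algebra for~\((v,n)\) subject to~\eqref{eq:gen-E2} (by~\cite{W1991b}*{Theorem 1.6}) and because~\(\Z\) is amenable, so that the reduced object produced by~\(\mathcal{W}\) coincides with the full crossed product, \(B\rtimes\Z\) carries the universal property for a unitary implementing the action; hence~\(C\) is the universal object described. For~(ii) the values of~\(\Comult[C]\) on~\(\tilde{z},\tilde{v},\tilde{n}\) are precisely the three identities computed immediately before the statement, and uniqueness is immediate since~\(\tilde{z},\tilde{v},\tilde{n}\) generate~\(C\) (with~\(\tilde{n}\) affiliated, its \(z\)\nb-transform lying in~\(\Mult(C)\)).

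For~(iii) I would define~\(f\in\Mor(C,C)\) through the universal property by~\(f(\tilde{z})=\tilde{z}\), \(f(\tilde{v})=1_{C}\), \(f(\tilde{n})=0\); the defining relations and the spectral condition are trivially met because~\(f(\tilde{n})=0\), so~\(f\) is well defined. Idempotency~\(f\circ f=f\) and the Hopf property~\(\Comult[C]\circ f=(f\otimes f)\circ\Comult[C]\) are then checked on the three generators; on~\(\tilde{n}\) both sides of the Hopf identity vanish since~\((f\otimes f)(\tilde{n}\otimes\tilde{z}\tilde{v}^{*}\dotplus\tilde{v}\otimes\tilde{n})=0\). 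The image of~\(f\) is the copy of~\(\Cont(\T)\) generated by~\(\tilde{z}\), on which~\(\Comult[C]\) restricts to~\(\tilde{z}\mapsto\tilde{z}\otimes\tilde{z}\); as in Remark~\ref{rem:closed-qnt} this realises~\(\T\) as a closed quantum subgroup of~\(\G[H]\). To identify the kernel I would pass to the coinvariants of the left coaction~\((f\otimes\Id_{C})\circ\Comult[C]\): the coproduct formulas give~\((f\otimes\Id_{C})\Comult[C](\tilde{v})=1_{C}\otimes\tilde{v}\) and~\((f\otimes\Id_{C})\Comult[C](\tilde{n})=1_{C}\otimes\tilde{n}\), so~\(\tilde{v},\tilde{n}\) are coinvariant and the coinvariant subalgebra is the copy of~\(B\) they generate, i.e.\ the braided~\(\qE\) group over~\(\T\).

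The main obstacle is the careful matching in~(i): one must certify that the reduced object generated by~\(\mathcal{W}\) is genuinely the universal \(\Cst\)\nb-algebra with the three relations, where amenability of~\(\Z\) is the essential input, and one must treat the unbounded normal generator~\(\tilde{n}\) at the level of affiliation rather than on a purely algebraic level of generators—verifying that~\(\Comult[C](\tilde{n})\) is affiliated with~\(C\otimes C\), that~\(f(\tilde{n})=0\) is compatible with~\(\tilde{n}\Aff C\), and that all the spectral and affiliation conditions survive the functional calculus used to define these maps.
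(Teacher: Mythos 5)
Your proposal is correct and follows essentially the same route as the paper: the theorem is a summary of the computations carried out in the subsection immediately preceding it (the manageable multiplicative unitary~\(\mathcal{W}\) from \cite{MRW2017}, the identification \(C=\Cont(\T)\boxtimes_{\Rmat}B\cong B\rtimes\Z\), and the explicit evaluation of~\(\Comult[C]\) on the generators), which is exactly the structure you lay out. The extra details you supply --- amenability of~\(\Z\) for matching the reduced and universal crossed products, and the generator-by-generator verification that~\(f\) is an idempotent Hopf \(^*\)\nb-homomorphism with the stated image and coinvariant kernel --- are points the paper leaves implicit, and your treatment of them is sound.
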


 \section{Contraction procedure between braided \(\textup{SU}_{q}(2)\) and \(\textup{E}_{q}(2)\) groups and their respective bosonisations}
  \label{sec:contraction}
  Throughout this section we fix a complex deformation parameter~\(0<|q|<1\), 
  the unitary \(\Rmattxt\)\nb-matrix~\(\Rmat\colon\Z\times\Z\to\T\), defined 
  by~\eqref{eq:bichar} and \(\T\) is a quasitriangular compact quantum group with respect to~\(\Rmat\). We denote the braided~\(\qE\) constructed 
  in Theorem~\ref{the:BrdqE2} by \((B_{\qE},\Comult[\qE])\).

Denote~\(\N^{0}=\N\cup\{0\}\). Consider the Hilbert space \(\Hils[L]_{\qSU}=\ell^{2}(\N^{0}\times\Z)\) equipped with an orthonormal basis~\(\{e_{i,j}\}_{i\in\N^{0}; j\in\Z}\). Denote~\(\Hils[L]_{\qE}=\ell^{2}(\Z\times\Z)\) and fix an orthonormal basis~\(\{e_{i,j}\}_{i,j\in\Z}\) for it. Recall that~\(\Hils[L]_{\qE}\) is an object of~\(\Corepcat(\T)\) with respect to the representation \(\Corep{U}(e_{i,j}\otimes e_{p})=e_{i,j}\otimes e_{p+j}\) defined in Section~\ref{sec:bcstqgpT}. Identification of the basis vectors of~\(\Hils[L]_{\qSU}\) with the corresponding basis vectors of~\(\Hils[L]_{\qE}\) defines an embedding~\(\Hils[L]_{\qSU}\hookrightarrow\Hils[L]_{\qE}\). Furthermore, the restriction of~\(\Corep{U}\) on 
\(\Hils[L]_{\qSU}\) defines a representation of~\(\T\) on it. Consequently, we obtain a \(\T\)\nb-equivariant embedding \(\Bound(\Hils[L]_{\qSU})\hookrightarrow\Bound(\Hils[L]_{\qE})\) and~\(1_{\Hils[L]_{\qSU}}\) is a \(\T\)\nb-equivariant orthogonal projection onto~\(\Hils[L]_{\qSU}\).   
  
  Now we recall the braided \(\qSU=(B_{\qSU},\Comult[\qSU])\) 
  group over \(\T\) with respect to~\(\Rmat\), constructed in~\cite{KMRW2016}. Define~\(\alpha,\gamma\in\Bound(\Hils[L]_{\qSU})\) by 
  \begin{equation}
     \alpha e_{i,j}\defeq \sqrt{1-|q|^{2i}}e_{i-1,j}, 
     \qquad
     \gamma e_{i,j}\defeq q^{i} e_{i,j-1}.
  \end{equation}
  Then \(B_{\qSU}\) is the universal \(\Cst\)\nb-algebra generated by~\(\alpha\) and 
  \(\gamma\). Define \(f_{\alpha}, f_{\gamma}\in\Contvin(\Specn)\) by 
\(f_{\alpha}(\lambda)=\sqrt{1-\Mod{\lambda}^{2}}\chi(\lambda)\) 
and~\(f_{\gamma}(\lambda)=\overline{\lambda}\chi(\lambda)\), 
where \(\chi\) is the indicator function of the 
closed unit disc~\(\{z\in\C\mid \Mod{z}\leq 1\}\). By the property of the 
continuous functional calculus we have
\begin{equation}
 \label{eq:SU2-E2-gen}
 \alpha=vf_{\alpha}(n), 
 \qquad 
 \gamma=f_{\gamma}(n).
\end{equation}
Following similar arguments as in~\cite{W1992}*{Section 1} and replacing \(\mu\) by \(\Mod{q}\) it is easy to observe
\begin{equation}
 \label{eq:contr-alg}
 B_{\qSU}\subset B_{\qE} 
 \quad\text{and}\quad
 B_{\qSU}=1_{\Hils[L]_{\qSU}}B_{\qE}1_{\Hils[L]_{\qSU}}.
\end{equation}
Then the restriction of~\(\delta\) in~\eqref{eq:T-act_Eq2} 
\(\delta\colon B_{\qSU}\to B_{\qSU}\otimes 
  \Cont(\T)\) defines an action of~\(\G\) on~\(B_{\qSU}\). It is easy to verify 
  that~\(\delta(\alpha)=\alpha\otimes 1_{\Cont(\T)}\) and 
  \(\delta(\gamma)=\gamma\otimes z^{*}\) is an action of~\(\T\) on~\(B_{\qSU}\), where 
  \(z\) is the unitary generator of~\(\Cont(\T)\) defined in the beginning of Section~\ref{sec:bcstqgpT}. 
  In fact, \(\delta(b)=\Corep{U}(b\otimes 1_{\Cont(\T)})\Corep{U}^{*}\) for 
  all~\(b\in B_{\qSU}\). 
  
 Consequently \(B_{\qSU}\subset B_{\qE}\) is \(\T\)\nb-equivariant, 
\( B_{\qSU}\boxtimes_{\Rmat} B_{\qSU}\subset  B_{\qE}\boxtimes_{\Rmat} B_{\qE}\) and the embeddings of~\(B_{\qSU}\) into~\(B_{\qSU}\boxtimes_{\Rmat} B_{\qSU}\) are obtained by restricting~\(j_{1}, j_{2}\in\Mor(B_{\qE}, B_{\qE}\boxtimes_{\Rmat} B_{\qE})\) in~\eqref{eq:j_on_E2}. In fact, a simple computation 
using~\eqref{eq:SU2-E2-gen} and~\eqref{eq:P-def} give \(j_{1}(\alpha)=\alpha\otimes 1_{\Hils[L]_{\qE}}\), \(j_{2}(\alpha)=1_{\Hils[L]_{\qE}}\otimes \alpha\), \(j_{1}(\gamma)=\gamma\otimes 1_{\Hils[L]_{\qE}}\) and 
\(j_{2}(\gamma)=P^{*}\otimes\gamma\), where~\(P\in\U(\Hils[L]_{\qE})\) defined 
in~\eqref{eq:P-def}.

The comultiplication map~\(\Comult[\qSU]\colon B_{\qSU}\to B_{\qSU}\boxtimes_{\Rmat} B_{\qSU}\) is defined by 
\begin{equation}
 \label{eq:Comult_qSU}
  \begin{aligned}
 \Comult[\qSU](\alpha) &= j_{1}(\alpha) j_{2}(\alpha) - qj_{1}(\gamma)^{*} j_{2}(\gamma),\\
 \Comult[\qSU](\gamma) &= j_{1}(\gamma) j_{2}(\alpha) + j_{1}(\alpha)^{*} j_{2}(\gamma).
\end{aligned}
\end{equation}
Consider the family of inner automorphisms~\(\{\tau^{k}\}_{k\in\Z}\) of~\(B_{\qE}\) define by~\(\tau^{k}(a)=v^{k}a v^{-k}\) for all~\(k\in\Z\). By 
virtue of~\eqref{eq:gen-E2} we have
 \[
   \tau^{k}(v)=v, 
   \qquad 
   \tau^{k}(n)=q^{k}n.
 \]
Therefore, \(\tau^{k}\) is \(\T\)\nb-equivariant, that is, \((\tau^{k}\otimes\Id_{\Cont(\T)})\circ\delta=\delta\circ\tau^{k}\) for all~\(k\in\Z\). Similarly, 
using~\eqref{eq:comult-bE2} we observe that \(\tau^{k}\) is an 
automorphism of braided \(\Cst\)\nb-quantum groups: 
\begin{equation}
 \label{eq:bq-hom}
      (\tau^{k}\boxtimes_{\Rmat}\tau^{k})\circ\Comult[\qE] 
   = \Comult[\qE]\circ\tau^{k} 
   \qquad\text{for all~\(k\in\Z\).}
\end{equation}
 The braided analogue of the contraction procedure~\cite{W1992} is contained 
 is following result. 
 \begin{theorem}
  \label{the:contraction}
  For any~\(a\in B_{\qE}\) 
  \begin{equation}
   \label{eq:contraction}
   \Comult[\qE](a) 
   =\lim_{k\to\infty} (\tau^{k}\boxtimes_{\Rmat}\tau^{k})\Comult[\qSU]\bigl(1_{\Hils[L]_{\qSU}}(\tau^{-k}(a))1_{\Hils[L]_{\qSU}}\bigr).
  \end{equation} 
 \end{theorem}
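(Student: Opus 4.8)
The plan is to treat the right\nb-hand side of~\eqref{eq:contraction} as a sequence of maps
\[
 \Phi_{k}(a)\defeq(\tau^{k}\boxtimes_{\Rmat}\tau^{k})\,\Comult[\qSU]\bigl(1_{\Hils[L]_{\qSU}}\tau^{-k}(a)1_{\Hils[L]_{\qSU}}\bigr),
 \qquad a\in B_{\qE},
\]
and to show that $\Phi_{k}\to\Comult[\qE]$ pointwise in the strong operator topology of $\Bound(\Hils[L]_{\qE}\otimes\Hils[L]_{\qE})$. First I would record that each $\Phi_{k}$ is a well\nb-defined contraction: by~\eqref{eq:contr-alg} the corner $1_{\Hils[L]_{\qSU}}\tau^{-k}(a)1_{\Hils[L]_{\qSU}}$ lies in $B_{\qSU}$, the comultiplication $\Comult[\qSU]$ is a \Star{}homomorphism, and the \(\T\)\nb-equivariant automorphisms \(\tau^{\pm k}\) induce the \Star{}homomorphism $\tau^{k}\boxtimes_{\Rmat}\tau^{k}$ on the twisted tensor product; hence $\norm{\Phi_{k}(a)}\le\norm{a}$ uniformly in $k$, and the same bound holds for $\Comult[\qE]$. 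The natural topology is the strong one rather than the norm, since norm convergence already fails for $a=v$: the summand of $\Comult[\qSU]$ built from $\gamma$ keeps norm $1$ while its support escapes to infinity under $\tau^{k}\boxtimes_{\Rmat}\tau^{k}$. Because $\{\Phi_{k}\}$ is uniformly bounded, a standard $\varepsilon/3$ argument reduces the claim to strong convergence on the norm\nb-dense \Star{}subalgebra generated by $v$ and the elements $f(n)$, $f\in\Contvin(\Specn)$ (recall $B_{\qE}=\Contvin(\Specn)\rtimes_{\alpha}\Z$).

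Next I would verify the convergence on the generators. Since $\tau^{k}(v)=v$ and $\tau^{k}(n)=q^{k}n$, we have $\tau^{k}(\alpha)=vf_{\alpha}(q^{k}n)$ and $\tau^{k}(\gamma)=f_{\gamma}(q^{k}n)$, and as $f_{\alpha}(q^{k}\lambda)\to1$ and $f_{\gamma}(q^{k}\lambda)\to0$ pointwise on $\Specn$ with bounded modulus, $\tau^{k}(\alpha)\to v$ and $\tau^{k}(\gamma)\to0$ strongly. For $a=v$ one has $1_{\Hils[L]_{\qSU}}v1_{\Hils[L]_{\qSU}}$ equal to the phase of $\alpha$, and feeding these limits into~\eqref{eq:Comult_qSU} through the functoriality identities $(\tau^{k}\boxtimes_{\Rmat}\tau^{k})\circ j_{i}=j_{i}\circ\tau^{k}$ gives $\Phi_{k}(v)\to j_{1}(v)j_{2}(v)=\Comult[\qE](v)$, the $\gamma^{*}\gamma$\nb-correction dying in the strong limit. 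For $a=n$, treated through its bounded functions, the compression produces (up to the braiding phase $\zeta=(\Ph{q})^{2}$ recorded in~\eqref{eq:j_on_E2}) the $\qSU$ generator $\gamma$, whose rescaling under $\tau^{k}$ restores $n$ and turns $\Comult[\qSU](\gamma)$ into the two summands $j_{1}(v)j_{2}(n)\dotplus j_{1}(n)j_{2}(v^{*})=\Comult[\qE](n)$; for real $q$ this is Woronowicz's degeneration of the $\qSU$ coproduct, and the phase $\zeta$ accounts for the complex\nb-$q$ twisting on both sides.

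The delicate point, which I expect to be the main obstacle, is to pass from the generators to the whole dense subalgebra, i.e.\ to control the failure of $\Phi_{k}$ to be multiplicative. A short computation gives
\[
 \Phi_{k}(ab)-\Phi_{k}(a)\Phi_{k}(b)=(\tau^{k}\boxtimes_{\Rmat}\tau^{k})\,\Comult[\qSU]\bigl(1_{\Hils[L]_{\qSU}}\tau^{-k}(a)(1-1_{\Hils[L]_{\qSU}})\tau^{-k}(b)1_{\Hils[L]_{\qSU}}\bigr),
\]
so everything hinges on showing that this ``corner'' term becomes negligible. When one factor carries a genuine $\Contvin(\Specn)$\nb-component the argument of $f$ is driven to infinity by the shift $q^{-k}$, where $f$ vanishes, and the corner tends to $0$ in norm. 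The genuinely hard case is the purely unitary direction: $\tau^{-k}(v)=v$ is $k$\nb-independent, and the boundary term at the index $i=0$ --- reflecting that the phase of $\alpha$ is only a co\nb-isometry whereas $v$ is unitary --- does not vanish in norm; one must instead show that after $\Comult[\qSU]$ and the rescaling $\tau^{k}\boxtimes_{\Rmat}\tau^{k}$ its support is pushed to $i\approx-k$ and hence disappears in the strong limit. This boundary analysis is exactly what Woronowicz controls in~\cite{W1992}*{Section 1}, and I would adapt those estimates with $\mu$ replaced by $\Mod{q}$ and with the additional bookkeeping of the phase $\zeta$.

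Finally I would assemble the pieces: the generator computation together with the corner estimate shows that $\Phi_{k}$ converges strongly on the dense \Star{}subalgebra to a multiplicative, \Star{}preserving map agreeing with $\Comult[\qE]$ on $v$ and $n$; as $\Comult[\qE]$ is the unique morphism with these values and $\{\Phi_{k}\}$ is uniformly bounded, the $\varepsilon/3$ argument upgrades the convergence to all of $B_{\qE}$, which is~\eqref{eq:contraction}.
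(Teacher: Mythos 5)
Your scaffolding (uniform contractivity of the approximating maps, reduction to a dense subalgebra, verification on generators) is reasonable, but there are two substantive problems. First, you retreat to strong operator convergence on the grounds that ``norm convergence already fails for $a=v$'' --- but $v\notin B_{\qE}$: it is only a multiplier ($B_{\qE}=\Contvin(\Specn)\rtimes_{\alpha}\Z$ contains the products $v^{m}f(n)$ with $f\in\Contvin(\Specn)$, not $v$ itself), so this is not a counterexample to the theorem, which concerns $a\in B_{\qE}$ only. For the same reason your dense \Star{}subalgebra ``generated by $v$ and the $f(n)$'' is not contained in $B_{\qE}$, and your generator computation for $a=n$ concerns an unbounded element merely affiliated with $B_{\qE}$. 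The paper instead works on $\mathcal{B}_{\qE}=\cup_{l}\tau^{l}(B_{\qSU})$, which is dense in $B_{\qE}$ and on which $\tau^{-k}(a)\in B_{\qSU}$ for large $k$ and commutes with $1_{\Hils[L]_{\qSU}}$; this makes each approximant an honest composition of \Star{}homomorphisms, reduces \eqref{eq:contraction} to \eqref{eq:contraction-aux}, and removes at a stroke both the compression issue and the multiplicativity-defect (``corner term'') analysis that occupies the second half of your argument.

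Second, and more seriously, the analytic core of the proof is missing. The obstruction you correctly identify --- $\tau^{k}(\gamma)$ has norm $1$ for every $k$ and tends to $0$ only in the strict sense --- cannot be dodged by changing topology; it has to be absorbed. The paper, following \cite{W1992}*{Sections 2--3}, does this with the elements $t$ and $Y$ of \eqref{eq:def-Y-t}: one proves the norm limits $\lim_{k}\alpha^{k}v^{-k}=t^{1/2}$ and $\lim_{k}\Comult[\qSU](\alpha^{k})(j_{1}(v)j_{2}(v))^{-k}=j_{1}(t^{1/2})j_{2}(t^{1/2})Y$, writes $\tau^{-k}(a)$ for $a\in B_{\qSU}$ in the sandwiched form $(\alpha^{*})^{k}\bigl(t^{-1/2}at^{-1/2}\bigr)\alpha^{k}$ up to a norm-small error, and concludes that the limit in \eqref{eq:contraction-aux} exists and equals $\widetilde{Y}^{*}\Comult[\qSU](a)\widetilde{Y}$. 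One must then still show that the resulting map $\Comult'$ extends to a morphism $B_{\qE}\to\Mult(B_{\qE}\boxtimes_{\Rmat}B_{\qE})$ (via the a.u.\ limits \eqref{eq:act-on-qSU}--\eqref{eq:au-lim-Y-t} and the extension criterion of \cite{W1992}*{Equation (39)}) and that it agrees with $\Comult[\qE]$ on the generators, which is \eqref{eq:cont-aux-2} and requires the domain estimate \eqref{eq:dom-Y-aux}. None of these ingredients --- $t$, $Y$, the conjugation trick, or the extension criterion --- appears in your proposal, so the convergence you assert on generators is a statement of the desired conclusion rather than a proof of it.
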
 
 \begin{proof}
 The proof essentially follows from \cite{W1992}*{Sections 2 \& 3} replacing \(\mu\) by \(\Mod{q}\), tensor product~\(\otimes\) of \(\Cst\)\nb-algebras by~\(\boxtimes_{\Rmat}\) and taking care of certain commutation relations.
 
 Recall the dense \Star{}subalgebra~\(\mathcal{B}_{\qE}\) of~\(B_{\qE}\) 
 defined in~\cite{W1992}*{Equation 22}
 \[
   \mathcal{B}_{\qE}=\cup_{k\in\Z} \tau^{l}(B_{\qSU}).
\]   
 Suppose \(a\in \mathcal{B}_{\qE}\). For sufficiently large~\(k\), 
 \(\tau^{-k}(a)\in B_{\qSU}\), which implies, \(\Comult[\qSU](\tau^{-k}(a))\in 
 B_{\qSU}\boxtimes_{\Rmat} B_{\qSU}\). Also, 
 \(a\) commutes with~\(1_{\Hils[L]_{\qSU}}\). 
 Therefore, \eqref{eq:contraction} coincides with the following expression
\begin{equation}
 \label{eq:contraction-aux}
 \Comult[\qE](a) 
   =\lim_{k\to\infty} (\tau^{k}\boxtimes_{\Rmat}\tau^{k})\Comult[\qSU](\tau^{-k}(a))
   \quad
   \text{for all~\(a\in \mathcal{B}_{\qE}\).}
\end{equation}
Define the following elements
\begin{equation}
 \label{eq:def-Y-t}
\begin{aligned}
  t &=\prod_{k=1}^{\infty}\bigl(1_{\Hils[L]_{\qSU}}-\Mod{q}^{2k}\gamma^{*}\gamma\bigr)\in B_{\qSU},\\
  Y &=\sum_{r=0}^{\infty} \Bigl(\prod_{i=1}^{r} \frac{1}{1-\Mod{q}^{2i}}\Bigr) 
  \bigl(-q j_1(\gamma^{*}) j_{2}(\gamma)\bigr)^{r}\bigl(j_{1}(v)j_{2}(v)\bigr)^{-r}
  \in B_{\qE}\boxtimes_{\Rmat} B_{\qE}.
\end{aligned}
\end{equation}
Note that the first term corresponding to \(r=0\) is~\(1_{\qSU}\boxtimes_{\Rmat}1_{\qSU}\).  
Clearly, \(\delta(\gamma^{*}\gamma)=\gamma^{*}\gamma\otimes 1\). Equivalently, 
\(\gamma^{*}\gamma\) is a degree zero \(\T\)\nb-homogeneous element of~\(B_{\qSU}\). Similarly, each term in~\(Y\) is also degree zero \(\T\)\nb-homogeneous 
element with respect to the diagonal action~\(\delta\bowtie\delta\) of~\(\T\) on 
\(B_{\qE}\boxtimes_{\Rmat} B_{\qE}\) mentioned in the Section~\ref{sec:Quasitriag}.
Therefore, both the elements~\(t\) and~\(Y\) are degree zero \(\T\)\nb-homogeneous elements. We also observe that~\(j_{k}(\alpha)j_{l}(\gamma)=j_{l}(\gamma)j_{k}(\alpha)\) for all~\(k,l =1,2\). 

These give the braided analogue of the formulae~\cite{W1992}*{Equation 42-43}:
\[
  \lim_{k\to\infty}\alpha^{k}v^{-k}= t^{\frac{1}{2}},
  \quad
  \lim_{k\to\infty}\Comult[\qSU](\alpha^{k})(j_{1}(v)j_{2}(v))^{-k}=j_{1}(t^{\frac{1}{2}})j_{2}(t^{\frac{1}{2}})Y.
\]
Consequently, for~\(a\in B_{\qSU}\) we  prove that the right hand side of~\eqref{eq:contraction-aux} is well defined 
\begin{align*}
 & \lim_{k\to\infty} \Bigl[(j_{1}(v)j_{2}(v))^{k}\Comult[\qSU]
 \bigl(v^{-k}t^{\frac{1}{2}}\bigl(t^{-\frac{1}{2}}at^{-\frac{1}{2}}\bigr)t^{\frac{1}{2}}v^{k}\bigr)(j_{1}(v)j_{2}(v))^{-k}\Bigr]\\ 
 &=\lim_{k\to\infty} \Bigl[(j_{1}(v)j_{2}(v))^{k}\Comult[\qSU]
 \bigl((\alpha^{*})^{k}\bigl(t^{-\frac{1}{2}}at^{-\frac{1}{2}}\bigr)\alpha^{k}\bigr)(j_{1}(v)j_{2}(v))^{-k}\Bigr]\\
 &=\widetilde{Y}^{*}\Comult[\qSU](a)\widetilde{Y}, 
 \quad\text{where~\(\widetilde{Y}\defeq\Comult[\qSU](t^{-\frac{1}{2}})(j_{1}(t^{\frac{1}{2}}) j_{2}(t^{\frac{1}{2}}))Y\in B_{\qE}\boxtimes_{\Rmat}B_{\qE}\).}
\end{align*}
Let~\(\Comult'(a)=\widetilde{Y}^{*}\Comult[\qSU](a)\widetilde{Y}\) for all~\(a\in B_{\qSU}\). For any~\(l\in\N\) and~\(a\in\mathcal{B}_{\qE}\), \eqref{eq:contraction-aux} implies 
\((\tau^{l}\boxtimes_{\Rmat}\tau^{l})\Comult'(a)=\Comult' (\tau^{l}(a))\). Also, \(\tau^{l} f(n)\to f(0)1_{\Hils[L]_{\qE})}\) almost uniformly, abbreviated as a.u., as~\(l\to\infty\) 
for all~\(f\in\Contvin(\Specn)\). In particular, we have
\begin{equation}
 \label{eq:act-on-qSU}
 \text{a.u.}\lim_{l\to\infty} \tau^{l}(\alpha)=v, 
 \qquad 
 \text{a.u.}\lim_{l\to\infty} \tau^{l}(\gamma)=0,
\end{equation}
Using the second limit in~\eqref{eq:act-on-qSU} we obtain 
\begin{equation}
 \label{eq:au-lim-Y-t}
  \text{a.u.}\lim_{l\to\infty}(\tau^{l}\boxtimes_{\Rmat}\tau^{l})Y= 1_{\Hils[L]_{\qE}\otimes\Hils[L]_{\qE}}, 
 \qquad 
  \text{a.u.}\lim_{l\to\infty}\tau^{l}(t^{\frac{1}{2}})= 1_{\Hils[L]_{\qE}}.
\end{equation}
Consequently, the bounded sequence~\(\{a_{l}\defeq \tau^{l}(t)\}_{l\in\N}\) in~\(B_{\qE}\) satisfies~\cite{W1992}*{Equation (39)}
\[
  \text{a.u.}\lim_{l\to\infty}\Comult'(\tau^{l}(t))
 = \text{a.u.}\lim_{l\to\infty}(\tau^{l}\boxtimes_{\Rmat}\tau^{l})(Y^{*}(t\otimes t)Y)
 = 1_{\Hils[L]_{\qE}\otimes\Hils[L]_{\qE}}.
\] 
Thus, \(\Comult'\) uniquely extends to an element in \(\Mor^{\T}(B_{\qE},B_{\qE}\boxtimes_{\Rmat}B_{\qE})\).

To complete the proof of~\eqref{eq:contraction-aux}, it is sufficient to verify 
\begin{equation}
 \label{eq:cont-aux-2}
  \lim_{l\to\infty}\Comult'(\tau^{l}(t^{\frac{1}{2}}\alpha t^{\frac{1}{2}}))=\Comult[\qE](v), 
 \qquad 
  \lim_{l\to\infty}\Comult'(\tau^{l}(t^{\frac{1}{2}}\gamma^{*} t^{\frac{1}{2}}))
  =\Comult[\qE](n).    
\end{equation}
From the concrete realisation of~\(n\) and~\(P\) given by~\eqref{eq:con-gen-E2} and 
\eqref{eq:P-def}, it is easy to verify that~\(nP\) is a normal operator and~\(\textup{Sp}(nP)=\Specn\). Furthermore, \eqref{eq:j_on_E2} immediately implies \(j_{1}(n)j_{2}(n)=nP\otimes n\). As immediate consequence of~\cite{W1992}*{Proposition 1.1} we obtain 
\begin{equation*}
  \lim_{l\to\infty} \Mod{q}^{-l}\norm{(\tau^{l}(\gamma^{*})\boxtimes_{\Rmat}\tau^{l}(\gamma))\psi}=0, 
 \quad\text{for all~\(\psi\in\dom(j_{1}(n))\cap\dom(j_{2}(n))\).}
\end{equation*}
This implies 
\begin{equation}
 \label{eq:dom-Y-aux}
 \lim_{l\to\infty} \Mod{q}^{-l}\norm{(\tau^{l}\boxtimes_{\Rmat}\tau^{l})Y\psi-\psi}=0, 
 \quad\text{for all~\(\psi\in\dom(j_{1}(n))\cap\dom(j_{2}(n))\).}
\end{equation}
Then the proof of~\eqref{eq:cont-aux-2} follows similarly from~\cite{W1992}*{Section 3 (Step 3)} using \eqref{eq:Comult_qSU}, \eqref{eq:def-Y-t}--\eqref{eq:dom-Y-aux}. 
\end{proof}

We end this section with an application of the contraction procedure described 
above to show that the bosonisations of~\(\qSU\) and~\(\qE\) are also related via a contraction procedure for the respective braided quantum groups. 

Recall the bosonisation~\(\Qgrp{H}{C}\) of~\(\qE\) described in Theorem~\ref{the:boson}. For any \(l\in\Z\) set \(\tilde{\tau}^l\defeq 1_{\Cont(\T)} \boxtimes_{\Rmat}\tau^l\). Then \(\{\tilde{\tau}^l\}_{l\in\Z}\) is also a one parameter group of automorphisms of \(C=\Cont(\T)\boxtimes_{\Rmat}B_{\qE}\). Furthermore, \(\tilde{\tau}^{-k}(c)\in B_{\qU}\) for \(c\in C\) and sufficiently large \(k\).

Bosonisation of~\(\qSU\) is the compact quantum group~\(\qU=(C_{\qU},\Comult[\qU])\) 
with \(\T\) as a closed quantum subgroup~\cite{KMRW2016}*{Section 6}. The underlying~\(\Cst\)\nb-algebra~\(C_{\qU}\) is the twisted tensor product \(\Cont(\T)\boxtimes_{\Rmat}B_{\qSU}\subset\Bound(\Hils\otimes\Hils[L]_{\qSU})\subset\Bound(\Hils\otimes\Hils[L]_{\qE})\). The canonical embeddings~\(j_{\Cont(\T)}, j_{B_{\qSU}}\) are give by \(j_{\Cont(\T)}(z)=z\otimes 1_{\Hils[L]_{\qE}}\) and \(j_{B_{\qSU}}(a)=\DuCorep{V}^{*}(1_{\Hils}\otimes a)\DuCorep{V}\) for \(a\in B_{\qSU}\subset\Bound(\Hils[L]_{\qSU})\), where~\(\DuCorep{V}\) is the unitary in~\eqref{eq:ducorep-T}. Consequently, \eqref{eq:contr-alg} implies
\[
  C_{\qU}\subset C 
  \quad\text{and}\quad
  C_{\qU}=1_{\Hils\otimes\Hils[L]_{\qSU}} C 1_{\Hils\otimes\Hils[L]_{\qSU}} .
\]
From~\cite{MRW2016}*{Theorem 6.4}, the 
comultiplication~\(\Comult[\qU]\) is given by \(\Comult[\qU]\defeq \Psi\circ (\Id_{\Cont(\T)}\boxtimes_{\Rmat}\Comult[\qSU])\), where 
\[
  \Psi(x)\defeq \Multunit_{13}\Corep{U}_{23}\DuCorep{V}_{34}^{*}x_{124}\DuCorep{V}_{34}\Corep{U}_{23}^{*}\Multunit[*]_{13}
  \quad \text{ for } x\in\Bound(\Hils\otimes\Hils[L]_{\qE}\otimes\Hils[L]_{\qE}).
\]

We also observe that \(\Comult[C]=\Psi\circ (\Id_{\Cont(\T)} \boxtimes_{\Rmat} \Comult[\qE])\). 
In particular, we get~\(\Comult[C]|_{\Cont(\T)}=\Comult[\qU]|_{\Cont(\T)}=\Comult[\Cont(\T)]\) as expected.
\begin{corollary}
 Contraction of~\(\qU\) group is isomorphic to the bosonisation of 
 braided~\(\qE\) group. Equivalently, 
\begin{equation}
\label{eq:contr-boson}
\Comult[C](c)= \lim_{k\to \infty}(\tilde{\tau}^{k}\otimes\tilde{\tau}^{k})
\Comult[\qU]\Bigl(1_{\Hils[H]\otimes\Hils[L]_{\qE}}\bigl(\tilde{\tau}^{-k}(c)\bigr)1_{\Hils[H]\otimes\Hils[L]_{\qE}}\Bigr)
\quad\text{for all~\(c\in C\).}
\end{equation}
\end{corollary}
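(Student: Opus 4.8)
The plan is to deduce the corollary directly from Theorem~\ref{the:contraction} by transporting the contraction of~\(B_{\qE}\) out of~\(B_{\qSU}\) through the functor~\(\Id_{\Cont(\T)}\boxtimes_{\Rmat}(-)\) and the conjugation~\(\Psi\), using the two factorisations \(\Comult[C]=\Psi\circ(\Id_{\Cont(\T)}\boxtimes_{\Rmat}\Comult[\qE])\) and \(\Comult[\qU]=\Psi\circ(\Id_{\Cont(\T)}\boxtimes_{\Rmat}\Comult[\qSU])\) recorded above. First I would write the contraction automorphisms as inner ones. Since \(\tilde{\tau}^{k}=\Id_{\Cont(\T)}\boxtimes_{\Rmat}\tau^{k}\) fixes \(j_{\Cont(\T)}(z)\) and sends \(j_{B_{\qE}}(b)\) to \(j_{B_{\qE}}(\tau^{k}(b))\), and since \(1_{\Hils}\otimes v^{k}\) commutes with \(\DuCorep{V}\) (both are diagonal in the basis~\(\{e_{p}\otimes e_{i,j}\}\), while \(v^{k}\) does not touch the index~\(j\)), a check on generators gives \(\tilde{\tau}^{k}=\textup{Ad}(1_{\Hils}\otimes v^{k})\) on~\(\Bound(\Hils\otimes\Hils[L]_{\qE})\). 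Likewise, using~\eqref{eq:j_on_E2}, \eqref{eq:P-def} and \(vnv^{*}=qn\) one has \(v^{k}Pv^{-k}=P\) and \(v^{k}nv^{-k}=q^{k}n\), hence \(\tau^{k}\boxtimes_{\Rmat}\tau^{k}=\textup{Ad}(v^{k}\otimes v^{k})\) on~\(B_{\qE}\boxtimes_{\Rmat}B_{\qE}\). In particular \(\tilde{\tau}^{k}\otimes\tilde{\tau}^{k}=\textup{Ad}(v^{k}_{(2)}v^{k}_{(4)})\) on the four-fold space \(\Bound(\Hils\otimes\Hils[L]_{\qE}\otimes\Hils\otimes\Hils[L]_{\qE})\).

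The crucial step is the intertwining
\[
 (\tilde{\tau}^{k}\otimes\tilde{\tau}^{k})\circ\Psi=\Psi\circ\bigl(\Id_{\Cont(\T)}\boxtimes_{\Rmat}(\tau^{k}\boxtimes_{\Rmat}\tau^{k})\bigr).
\]
As \(\Psi\) is conjugation of \(x_{124}\) by the fixed unitary \(\Multunit_{13}\Corep{U}_{23}\DuCorep{V}^{*}_{34}\), this reduces to the commutation of \(v^{k}_{(2)}v^{k}_{(4)}\) with each of \(\Multunit_{13}\), \(\Corep{U}_{23}\) and \(\DuCorep{V}_{34}\): the first holds by disjointness of legs; \(\Corep{U}\) commutes with \(v\otimes 1_{\Hils}\) as observed before~\eqref{eq:T-act_Eq2}, giving commutation of \(v^{k}_{(2)}\) with \(\Corep{U}_{23}\); and \(1_{\Hils}\otimes v^{k}\) commutes with \(\DuCorep{V}\) by the diagonality above, giving commutation of \(v^{k}_{(4)}\) with \(\DuCorep{V}_{34}\). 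Thus \(\textup{Ad}(v^{k}_{(2)}v^{k}_{(4)})\) passes through the conjugating unitary to act on \(x_{124}\) as \(\textup{Ad}(1\otimes v^{k}\otimes v^{k})\) in the domain legs, which on the range of \(\Id_{\Cont(\T)}\boxtimes_{\Rmat}\Comult[\qSU]\) is exactly \(\Id_{\Cont(\T)}\boxtimes_{\Rmat}(\tau^{k}\boxtimes_{\Rmat}\tau^{k})\).

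It remains to match the compressions. The projection \(1_{\Hils\otimes\Hils[L]_{\qSU}}=1_{\Hils}\otimes 1_{\Hils[L]_{\qSU}}\) is diagonal in the \(\Hils[L]_{\qE}\)-basis, so it commutes with \(\DuCorep{V}\) and with \(j_{\Cont(\T)}(\Cont(\T))\); combining this with \(B_{\qSU}=1_{\Hils[L]_{\qSU}}B_{\qE}1_{\Hils[L]_{\qSU}}\) from~\eqref{eq:contr-alg} yields, on the spanning elements \(c=j_{\Cont(\T)}(f)j_{B_{\qE}}(b)\),
\[
 1_{\Hils\otimes\Hils[L]_{\qSU}}\,\tilde{\tau}^{-k}(c)\,1_{\Hils\otimes\Hils[L]_{\qSU}}=j_{\Cont(\T)}(f)\,j_{B_{\qSU}}\bigl(1_{\Hils[L]_{\qSU}}\tau^{-k}(b)1_{\Hils[L]_{\qSU}}\bigr)\in C_{\qU}.
\]
Substituting \(\Comult[\qU]=\Psi\circ(\Id_{\Cont(\T)}\boxtimes_{\Rmat}\Comult[\qSU])\), applying the intertwining identity and the functoriality of \(\Id_{\Cont(\T)}\boxtimes_{\Rmat}(-)\), the right-hand side of~\eqref{eq:contr-boson} becomes \(\Psi\circ(\Id_{\Cont(\T)}\boxtimes_{\Rmat}(-))\) applied to \(\lim_{k}(\tau^{k}\boxtimes_{\Rmat}\tau^{k})\Comult[\qSU]\bigl(1_{\Hils[L]_{\qSU}}\tau^{-k}(b)1_{\Hils[L]_{\qSU}}\bigr)\), the factor \(j_{\Cont(\T)}(f)\) being carried along unchanged. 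By Theorem~\ref{the:contraction} this inner limit equals \(\Comult[\qE](b)\), so the expression collapses to \(\Psi\circ(\Id_{\Cont(\T)}\boxtimes_{\Rmat}\Comult[\qE])(c)=\Comult[C](c)\). Interchanging the limit with \(\Psi\) and with the morphism \(\Id_{\Cont(\T)}\boxtimes_{\Rmat}(-)\) is legitimate because \(\Psi\) is conjugation by a fixed unitary and the latter is a \(\Cst\)\nb-morphism, both strictly continuous; the passage from the dense span to all of \(C\) then uses the uniform boundedness already supplied by Theorem~\ref{the:contraction}. The main obstacle is precisely the intertwining identity of the second paragraph: it is what lets the single conjugation \(\Psi\) simultaneously turn \(\Comult[\qSU]\) into \(\Comult[\qE]\) and turn the \(C_{\qU}\)-contraction data into the \(C\)-contraction data.
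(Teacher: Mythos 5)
Your argument is correct and follows essentially the same route as the paper: both proofs rest on the factorisations \(\Comult[C]=\Psi\circ(\Id_{\Cont(\T)}\boxtimes_{\Rmat}\Comult[\qE])\) and \(\Comult[\qU]=\Psi\circ(\Id_{\Cont(\T)}\boxtimes_{\Rmat}\Comult[\qSU])\), the key intertwining identity \((\tilde{\tau}^{k}\otimes\tilde{\tau}^{k})\circ\Psi=\Psi\circ\bigl(\Id_{\Cont(\T)}\boxtimes_{\Rmat}(\tau^{k}\boxtimes_{\Rmat}\tau^{k})\bigr)\) obtained from the same commutation facts, and the reduction to Theorem~\ref{the:contraction} on the dense span. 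The only cosmetic difference is that you keep the compression by \(1_{\Hils\otimes\Hils[L]_{\qSU}}\) explicit, whereas the paper discards it by passing to the dense subalgebra \(\mathcal{C}\) first.
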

\begin{proof}
Since \(\mathcal{B}_{\qE}\) is dense in \(B_{\qE}\), 
\(\mathcal{C}\defeq j_{\Cont(\T)}(\Cont(\T)j_{B_{\qE}}(\mathcal{B}_{\qE})\) is also 
dense in \(C\) and the equation \eqref{eq:contr-boson} coincides with the following:
\begin{equation}
 \label{eq:cont-U2}
\Comult[C](c)= \lim_{k\to \infty}(\tilde{\tau}^{k}\otimes\tilde{\tau}^{k})\Comult[\qU](\tilde{\tau}^{-k}(c)) 
\qquad \text{for \(c\in\mathcal{C}\).}
\end{equation}
For every~\(k\in\Z\), the automorphisms 
\(\tilde{\tau}^{k}\) act trivially on the first factor of~\(C=\Cont(\T)\boxtimes_{\Rmat} B_{\qE}\). 
Subsequently, \eqref{eq:cont-U2} becomes the equality of the restrictions 
of~\(\Comult[C]\) and~\(\Comult[\qU]\) on the common 
closed quantum subgroup~\(\T\) for~\(c=j_{\Cont(\T)}(z)\). 

Restriction of the faithful representation~\(j_{B_{\qE}}\in\Mor(B_{\qE},\Comp(\Hils\otimes\Hils[L]_{\qE}))\) on~\(\Cont(\T)\) is given by 
\(j_{B_{\qE}}(v)=1_{\Hils}\otimes v\). Then 
\(\tilde{\tau}^{k}(c)=(1_{\Hils}\otimes v^{k})c(1_{\Hils}\otimes v^{-k})\) for 
\(c\in C\subset\Bound(\Hils\otimes\Hils[L]_{\qE})\) and~\(k\in\Z\). It is also easy to verify that~\(v\otimes 1_{\Hils}\) commutes with~\(\Corep{U}\),~\(1_{\Hils}\otimes v\) commutes with~\(\DuCorep{V}\). These imply
\[
  (\tilde{\tau}^{k}\otimes\tilde{\tau}^{k})\Psi(x)
  =\Psi\bigl((\Id_{\Cont(\T)}\boxtimes_{\Rmat}\tau^{k}\boxtimes_{\Rmat}\tau^{k})(x)\bigr)
 \quad\text{for all~\(x\in \Cont(\T)\boxtimes_{\Rmat} B_{\qE}\boxtimes_{\Rmat} B_{\qE}\).}
 \]
Finally, using~\eqref{eq:contraction-aux} we verify~\eqref{eq:cont-U2} for 
\(c= j_{B_{\qE}}(b)\) with \(b\in\mathcal{B}_{\qE}\)
 \begin{align*}
      \Comult[C](j_{B_{\qE}}(b)) 
 &=\Psi (1_{\Cont(\T)} \boxtimes_{\Rmat} \Comult[\qE](b))\\
 &=\Psi \Bigl(1_{\Cont(\T)} \boxtimes_{\Rmat} \lim_{k\to\infty} \bigl((\tau^{k}\boxtimes_{\Rmat}\tau^{k})\Comult[\qSU](\tau^{-k}(b))\bigr)\Bigr)\\
 &=\lim_{k\to\infty} \Psi \Bigl(1_{\Cont(\T)}\boxtimes_{\Rmat} \bigl((\tau^{k}\boxtimes_{\Rmat}\tau^{k})\Comult[\qSU](\tau^{-k}(b))\bigr)\Bigr)\\
 &=\lim_{k\to\infty} \Bigl((\tilde{\tau}^{k}\otimes\tilde{\tau}^{k})\Psi\bigl(1_{\Cont(\T)} 
 \boxtimes_{\Rmat}\Comult[\qSU](\tau^{-k}(b))\bigr)\Bigr)\\
 &=\lim_{k\to\infty} \Bigl((\tilde{\tau}^{k}\otimes\tilde{\tau}^{k})
 \Comult[\qU]\bigl(j_{B_{\qSU}}(\tau^{-k}(b))\bigr)\Bigr)\\
 &=\lim_{k\to\infty} \Bigl((\tilde{\tau}^{k}\otimes\tilde{\tau}^{k})
  \Comult[\qU]\bigl(\tilde{\tau}^{-k}(j_{B_{\qSU}}(b))\bigr)\Bigr). 
  \qedhere
 \end{align*}
\end{proof}

\appendix
\section{Yetter-Drinfeld representation category over quasitriangular quantum groups}
\label{sec:YD-rep}  
 Let \(\Qgrp{G}{A}\) be a quantum group, \(\DuQgrp{G}{A}\) be its dual, and  
 \(\multunit\in\U(\hat{A}\otimes A)\) be the reduced bicharacter. 
 
 A~\(\G\)\nb-\emph{Yetter-Drinfeld representation} is a triple 
 \((\Hils[L], \corep{U},\corep{V})\) consisting of a Hilbert space 
 \(\Hils[L]\), representations~\(\corep{U}\) and~\(\corep{V}\) of 
 \(\G\) and \(\DuG\) on~\(\Hils[L]\) subject to the commutation relation:
    \begin{equation}
      \label{eq:U_V_compatible_1}
      \corep{V}_{12}\corep{U}_{13} \multunit_{23}
      = \multunit_{23} \corep{U}_{13}\corep{V}_{12}
      \quad\text{in }\U(\Comp(\Hils[L])\otimes\hat{A}\otimes A).
    \end{equation}
 A morphism between~\(\G\)\nb-Yetter-Drinfeld representations 
 \((\Hils[L]_{1}, \corep{U}^{1},\corep{V}^{1})\) 
 and~\((\Hils[L]_{2}, \corep{U}^{2},\corep{V}^{2})\) is an element 
 \(t\in\Bound(\Hils[L]_{1},\Hils[L]_{2})\) such that 
 \(t\in\Hom^{\G}(\corep{U}^{1},\corep{U}^{2})\) in~\(\Corepcat(\G)\) 
 and~\(t\in\Hom^{\DuG}(\corep{V}^{1},\corep{V}^{2})\) in~\(\Corepcat(\DuG)\), 
 respectively. Let  \(\YDCorepcat(\G)\) denotes the category of \(\G\)\nb-Yetter-Drinfeld 
 representations. It is easy to verify that~\(\tenscorep\) operation 
 on \(\YDCorepcat(\G)\) defined by \((\Hils[L]_{1}, \corep{U}^{1},\corep{V}^{1})
 \tenscorep (\Hils[L]_{2}, \corep{U}^{2},\corep{V}^{2})\defeq 
 (\Hils[L]_{1}\otimes\Hils[L]_{2}, \corep{U}^{1}\tenscorep\corep{U}^{2},\corep{V}^{1}
 \tenscorep\corep{V}^{2})\) is a tensor product with tensor unit~\(\C\). This category 
 is already a unitarily braided monoidal category, see~\cite{MRW2016}*{Proposition 3.2}.

\begin{proposition}
 \label{prop:YD-vs-Corep-R}
 Let~\(\Qgrp{G}{A}\) be a quasitriangular quantum group with an~\(\Rmattxt\)\nb-matrix 
 \(\Rmat\in\U(\hat{A}\otimes\hat{A})\). For any object~\((\Hils[L],\corep{U})\) in~\(\Corepcat(\G)\) 
 the~\(\Rmattxt\)\nb-matrix~\(\Rmat\) induces a canonical object~\((\Hils[L],\corep{V})\) in 
 \(\Corepcat(\DuG)\) such that~\((\Hils[L],\corep{U},\corep{V})\) becomes an object in 
 \(\YDCorepcat(\G)\). Moreover, the construction gives an injective braided 
 monoidal functor~\(\mathcal{F}\colon\Corepcat(\G)\to\YDCorepcat(\G)\) that maps 
 \((\Hils[L],\corep{U})\) to~\((\Hils[L],\corep{U},\corep{V})\) and leaves the morphisms 
 unchanged. 
\end{proposition}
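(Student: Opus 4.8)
The plan is to build the \(\DuG\)\nb-representation \(\corep{V}\) out of \(\corep{U}\) by exploiting that the \(\Rmattxt\)\nb-matrix \(\Rmat\in\U(\hat{A}\otimes\hat{A})\) is simultaneously a left and a right bicharacter, then to verify the Yetter--Drinfeld compatibility \eqref{eq:U_V_compatible_1}, and finally to check that the assignment upgrades to an injective braided monoidal functor. First I would define \(\corep{V}\in\U(\Comp(\Hils[L])\otimes\hat{A})\) by \eqref{eq:du-corep-R-mat}, following \cite{MRW2012}*{Equation 33}; in the concrete picture used in the proof of Theorem~\ref{the:BQgrp-quasitriag} this rearranges to \(\Corep{V}_{13}=\Corep{U}_{12}^{*}\corep{R}'_{23}\Corep{U}_{12}\corep{R}'{}^{*}_{23}\), so that \(\corep{V}\) is obtained by conjugating the \(A\)\nb-leg of \(\corep{U}\) with \(\Rmat\) through the duality pairing implemented by \(\multunit\). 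That the right-hand side, though written with an auxiliary \(A\)\nb-leg, actually lies in the legs carrying \(\Hils[L]\) and the new \(\hat{A}\), and that it is a genuine corepresentation of \(\DuG\), i.e. \((\Id_{\Comp(\Hils[L])}\otimes\DuComult[A])\corep{V}=\corep{V}_{12}\corep{V}_{13}\), follows by inserting the right bicharacter identity \((\Id_{\hat{A}}\otimes\DuComult[A])\Rmat=\Rmat_{12}\Rmat_{13}\) together with the corepresentation identity \eqref{eq:corep_cond} for \(\corep{U}\) into the defining formula; this is a bookkeeping computation in leg numbering.

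The technical core is the Yetter--Drinfeld compatibility \eqref{eq:U_V_compatible_1}, \(\corep{V}_{12}\corep{U}_{13}\multunit_{23}=\multunit_{23}\corep{U}_{13}\corep{V}_{12}\) in \(\U(\Comp(\Hils[L])\otimes\hat{A}\otimes A)\). The decisive input here is \emph{not} the bicharacter property but the \(\Rmattxt\)\nb-matrix condition \eqref{eq:R-mat}, namely \(\Rmat(\flip\DuComult[A](\hat{a}))\Rmat^{*}=\DuComult[A](\hat{a})\), which expresses that \(\Rmat\) conjugates the coproduct of \(\DuG\) into its opposite. Substituting the definition of \(\corep{V}\) and transporting \(\corep{U}_{13}\) across \(\multunit_{23}\) by the pentagon relation for \(\multunit\), the discrepancy between the two sides of \eqref{eq:U_V_compatible_1} collapses exactly onto \eqref{eq:R-mat}. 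I expect this to be the main obstacle: it is the only point at which quasitriangularity proper — rather than the bare bicharacter axioms — is invoked, and one must keep careful track of how the \(A\)\nb- and \(\hat{A}\)\nb-legs of \(\Rmat\), \(\multunit\) and \(\corep{V}\) interlock.

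For functoriality I would observe that \(\mathcal{F}\) does nothing to morphisms. If \(t\in\Hom^{\G}(\corep{U}^{1},\corep{U}^{2})\), so that \((t\otimes 1_A)\corep{U}^{1}=\corep{U}^{2}(t\otimes 1_A)\), then because each \(\corep{V}^{i}\) arises from \(\corep{U}^{i}\) by conjugation with the \emph{fixed} \(\Rmat\) in a way natural in the representation, the same \(t\) satisfies \((t\otimes 1_{\hat{A}})\corep{V}^{1}=\corep{V}^{2}(t\otimes 1_{\hat{A}})\); hence \(t\in\Hom^{\DuG}(\corep{V}^{1},\corep{V}^{2})\) and \(t\) is a morphism in \(\YDCorepcat(\G)\). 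Injectivity of \(\mathcal{F}\) is then immediate, since it fixes the underlying Hilbert space and the morphisms while \(\corep{U}\) already determines \(\corep{V}\). For monoidality I would use the left bicharacter identity \((\DuComult[A]\otimes\Id_{\hat{A}})\Rmat=\Rmat_{23}\Rmat_{13}\) to show that the \(\DuG\)\nb-representation attached to \(\corep{U}^{1}\tenscorep\corep{U}^{2}\) is precisely \(\corep{V}^{1}\tenscorep\corep{V}^{2}\), giving \(\mathcal{F}((\Hils[L]_{1},\corep{U}^{1})\tenscorep(\Hils[L]_{2},\corep{U}^{2}))=\mathcal{F}(\Hils[L]_{1},\corep{U}^{1})\tenscorep\mathcal{F}(\Hils[L]_{2},\corep{U}^{2})\).

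It remains to see that \(\mathcal{F}\) respects the braidings. The braiding on \(\Corepcat(\G)\) is \(\Braiding{\Hils[L]_{1}}{\Hils[L]_{2}}=Z\circ\Flip\) with \(Z\) the unique solution of \eqref{eq:Z} associated with an \(\Rmat\)\nb-Heisenberg pair, whereas \(\YDCorepcat(\G)\) carries the canonical braiding of \cite{MRW2016}*{Proposition 3.2}. I would evaluate the latter on the images \((\Hils[L]_{i},\corep{U}^{i},\corep{V}^{i})\) and, using the description of \(\corep{V}^{i}\) through \(\corep{U}^{i}\) and \(\Rmat\) obtained above, check that its non-flip part solves precisely the defining equation \eqref{eq:Z}; by the uniqueness asserted there it equals \(Z\), so the two braidings coincide and \(\mathcal{F}\) is braided. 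The background facts that \(\Corepcat(\G)\) and \(\YDCorepcat(\G)\) are unitarily braided monoidal may be quoted from \cite{MRW2016}*{Proposition 3.2 \& Theorem 4.3}, so that the genuine work is confined to matching the two descriptions of the braiding through \eqref{eq:Z}.
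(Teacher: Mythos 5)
Your plan follows the paper's proof essentially step for step: \(\corep{V}\) is obtained from \eqref{eq:du-corep-R-mat} (the paper phrases this via the right quantum group homomorphism \(\Delta_{R}\) associated to the bicharacter \(\Rmat\), after first checking that the second leg of \(\corep{U}^{*}_{12}\bigl((\Id\otimes\Delta_{R})\corep{U}\bigr)\) is trivial -- your ``bookkeeping computation''), the Yetter--Drinfeld compatibility \eqref{eq:U_V_compatible_1} is indeed exactly where the \(\Rmattxt\)\nb-matrix condition \eqref{eq:R-mat} enters, and functoriality, injectivity and monoidality are handled as you describe. The only cosmetic divergence is at the end: the paper simply quotes the proof of \cite{MRW2016}*{Theorem 5.3} for preservation of the braidings, whereas you propose to re-derive it from the uniqueness of \(Z\) in \eqref{eq:Z}, which would also work.
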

\begin{proof}
Since \(\Rmat\in\U(\hat{A}\otimes\hat{A})\) is a bicharacter, it is also a quantum group 
homomorphism from~\(\G\) to~\(\DuG\). Let~\(\Delta_{R}\in 
\Mor(A,A\otimes\hat{A})\) be the right quantum group homomorphism associated to it
~\cite{MRW2012}*{Theorem 5.3}. Let~\((\Hils[L],\corep{U})\) be an object in \(\Corepcat(\G)\).  
Consider the unitary~\(\widetilde{V}\defeq \corep{U}^{*}_{12} 
\big((\Id_{\Comp(\Hils[L])}\otimes\Delta_{R})\corep{U}\bigr) 
\in\U(\Comp(\Hils[L])\otimes A\otimes\hat{A})\). In particular, \(\Delta_{R}\) satisfies 
\((\Comult[A]\otimes\Id_{\hat{A}})\circ\Delta_{R}=(\Id_{A}\otimes\Delta_{R})\circ\Comult[A]\). 
Using the last equation and the fact the~\(\corep{U}\) is a representation, we compute
\begin{align*}
       (\Id_{\Comp(\Hils[L])}\otimes\Comult[A]\otimes\Id_{\hat{A}})\widetilde{V}
   &= \bigl((\Id_{\Comp(\Hils[L])}\otimes\Comult[A])\corep{U}^{*}\bigr)_{123}
        \Bigl(\bigl(\Id_{\Comp(\Hils[L])}\otimes\bigl((\Comult[A]\otimes\Id_{\hat{A}})\Delta_{R}\bigr)\bigr)\corep{U}\Bigr)\\
  &=\corep{U}^{*}_{13}\corep{U}^{*}_{12} \Bigl(\bigl(\Id_{\Comp(\Hils[L])}\otimes\bigl((\Id_{A}\otimes\Delta_{R})\Comult[A]\bigr)\bigr)\corep{U}\Bigr)\\
  &=\corep{U}^{*}_{13}\corep{U}^{*}_{12}\bigl((\Id_{\Comp(\Hils[L])}\otimes\Id_{A}\otimes\Delta_{R})\corep{U}_{12}\corep{U}_{13}\bigr)\\
  &=\corep{U}^{*}_{13}((\Id_{\Comp(\Hils[L])}\otimes\Delta_{R})\corep{U})_{134}
     =\widetilde{V}_{134}.   
\end{align*}
By \cite{MRW2012}*{Corollary 2.2} the second leg of~\(\widetilde{V}\) is trivial; hence there is a 
unique element~\(\corep{V}\in\U(\Comp(\Hils[L])\otimes\hat{A})\) such 
that 
\begin{equation}
 \label{eq:du-corep-R-mat}
(\Id_{\Comp(\Hils[L])}\otimes\Delta_{R})\corep{U}=\corep{U}_{12}\corep{V}_{13}  
\qquad\text{in~\(\U(\Comp(\Hils[L])\otimes A\otimes\hat{A})\).}
\end{equation}
Also, the first part of the proof of~\cite{MRW2012}*{Theorem 5.3} shows that~\((\Hils[L],\corep{V})\) is 
an object in~\(\Corepcat(\DuG)\). 

Now we show that~\((\Hils[L],\corep{U},\corep{V})\) is an object  in~ \(\YDCorepcat(\G)\). The second 
condition in~\eqref{eq:redbichar} and the~\(\Rmattxt\)\nb-matrix condition~\eqref{eq:R-mat} together imply
\begin{align*}
     \multunit_{23}\multunit_{13}\Rmat_{12} 
 =\bigl((\DuComult[A]\otimes\Id_{A})\multunit\bigr)\Rmat_{12} 
 =\Rmat_{12}\bigl((\flip\circ\DuComult[A]\otimes\Id_{A})\multunit\bigr)
 &=\Rmat_{12}\bigl(\flip_{12}( \multunit_{23}\multunit_{13})\bigr)\\
 &=\Rmat_{12}\multunit_{13}\multunit_{23}.
\end{align*}
The bijective correspondence between~\(\Rmat\) and \(\Delta_{R}\) in~\cite{MRW2012}*{Equation 32}, 
the first condition in~\eqref{eq:redbichar} and the last identity give
\begin{align*}
     \multunit_{34}\Bigl(\flip_{34}\bigl(\Id_{\hat{A}}\otimes(\Comult[A]\otimes\Id_{A})\Delta_{R})\multunit\bigr)\Bigr)\multunit[*]_{34}
 &=\multunit_{34}\Bigl(\flip_{34}\bigl(\multunit_{12}\multunit_{13}\Rmat_{14}\bigr)\Bigr)\multunit[*]_{34}\\
 &=\multunit_{34}\multunit_{12}\multunit_{14}\Rmat_{13}\multunit[*]_{34}\\
 &=\multunit_{12}\multunit_{34}\multunit_{14}\Rmat_{13}\multunit[*]_{34}\\
 &=\multunit_{12}\Rmat_{13}\multunit_{14}\\
 &=(\Id_{\hat{A}}\otimes(\Delta_{R}\otimes\Id_{A})\Comult[A])\multunit .
\end{align*}
Then slicing the first leg of the last computation by~\(\omega\in\hat{A}'\) we obtain
\[
  \multunit_{23}\Bigl(\flip_{23}\bigl((\Comult[A]\otimes\Id_{A})\Delta_{R}(a)\bigr)\Bigr)\multunit[*]_{23}=(\Delta_{R}\otimes\Id_{A})\Comult[A](a)
  \quad\text{for all~\(a\in A\).}
\]
Using this we now compute
\begin{align*}
           \corep{U}_{12}\corep{V}_{13}\corep{U}_{14}\multunit_{34}
     &=\bigl((\Id_{\Comp(\Hils[L])}\otimes(\Delta_{R}\otimes\Id_{A})\Comult[A])\corep{U}\bigr)\multunit_{34}\\
     &=\multunit_{34}\Bigl(\flip_{34}\bigl((\Id_{\Comp(\Hils[L])}\otimes(\Comult[A]\otimes\Id_{A})\Delta_{R})\corep{U}\bigr)\Bigr)\\
     &=\multunit_{34}\Bigl(\flip_{34}\bigl(\corep{U}_{12}\corep{U}_{13}\corep{V}_{14}\bigr)\Bigr)
     =\multunit_{34}\corep{U}_{12}\corep{U}_{14}\corep{V}_{13}
     =\corep{U}_{12}\multunit_{34}\corep{U}_{14}\corep{V}_{13}.
\end{align*}
Cancelling the unitary~\(\corep{U}_{12}\) on both sides of the last equation we obtain~\eqref{eq:U_V_compatible_1}; 
hence~\((\Hils[L],\corep{U},\corep{V})\) is an object in~\(\YDCorepcat(\G)\).

Suppose~\((\Hils[L]_{i},\corep{U}^{i})\) and~\((\Hils[L]_{i},\corep{V}^{i})\) are objects in~\(\Corepcat(\G)\) and~\(\Corepcat(\DuG)\) respectively, induced by the~\(\Rmattxt\)\nb-matrix of~\(\G\) for~\(i=1,2\). Let 
\(t\in\Hom^{\G}(\corep{U}^{1},\corep{U}^{2})\) in~\(\Corepcat(\G)\). Then
\begin{align*}
     \corep{U}^{2}_{12}(t\otimes 1_{A\otimes\hat{A}})\corep{V}^{1}_{13}
   =  (t\otimes 1_{A\otimes\hat{A}})\corep{U}^{1}_{12}\corep{V}^{1}_{13}
  &=(t\otimes\Delta_{R})\corep{U}\\
  &=((\Id_{\Comp(\Hils[L]_{2})}\otimes\Delta_{R})\corep{U}^{2})(t\otimes 1_{A\otimes\hat{A}})\\
  &=\corep{U}^{2}_{12}\corep{V}^{2}_{13}(t\otimes 1_{A\otimes\hat{A}}).
\end{align*}
Cancelling the unitary~\(\corep{U}^{2}_{12}\) on both sides of the last equation we get 
\(t\in\Hom^{\DuG}(\corep{V}^{1},\corep{V}^{2})\) in~\(\Corepcat(\DuG)\). Thus we 
have an injective functor~\(\mathcal{F}\colon \Corepcat(\G)\to\YDCorepcat(\G)\)  
that maps objects \((\Hils[L],\corep{U})\to (\Hils[L],\corep{U},\corep{V})\) and 
leaves the morphisms unchanged. Furthermore, \(\mathcal{F}\) preserves the tensor 
product of representations:
\begin{align*}
 (\Id_{\Comp(\Hils[L]_{1}\otimes\Hils[L]_{2})}\otimes\Delta_{R})\corep{U}^{1}\tenscorep\corep{U}^{2} 
 &= (\Id_{\Comp(\Hils[L]_{1}\otimes\Hils[L]_{2})}\otimes\Delta_{R})(\corep{U}^{1}_{13}\corep{U}^{2}_{23})\\
 &=\corep{U}^{1}_{13}\corep{V}^{1}_{14}\corep{U}^{2}_{23}\corep{V}^{2}_{24}\\
 &=\corep{U}^{1}_{13}\corep{U}^{2}_{23}\corep{V}^{1}_{14}\corep{V}^{2}_{24}
    =(\corep{U}^{1}\tenscorep\corep{U}^{2})_{123}(\corep{V}^{1}\tenscorep\corep{V}^{2})_{124}.
\end{align*}
In particular, the proof of~\cite{MRW2016}*{Theorem 5.3} shows that~\(\mathcal{F}\) preserves the braidings. 
Therefore, \(\mathcal{F}\) is an injective braided monoidal structure preserving functor.  
\end{proof}
  
\begin{bibdiv}
  \begin{biblist}
\bib{B1992}{article}{
  author={Baaj, Saad},
  title={Repr\'{e}sentation r\'{e}guli{\`e}re du groupe quantique {$E_\mu (2)$} de {W}oronowicz},
  date={1992},
  issn={0764-4442},
  journal={C. R. Acad. Sci. Paris S\'{e}r. I Math.},
  volume={314},
  number={13},
  pages={1021\ndash 1026},
  eprint={http://gallica.bnf.fr/ark:/12148/bpt6k58688425/f1025.item},
  review={\MR {1168528}},
}

\bib{BS1993}{article}{
  author={Baaj, Saad},
  author={Skandalis, Georges},
  title={Unitaires multiplicatifs et dualit\'{e} pour les produits crois\'{e}s de {$C^*$}-alg{\`e}bres},
  date={1993},
  issn={0012-9593},
  journal={Ann. Sci. \'{E}cole Norm. Sup. (4)},
  volume={26},
  number={4},
  pages={425\ndash 488},
  eprint={http://www.numdam.org/item?id=ASENS_1993_4_26_4_425_0},
  review={\MR {1235438}},
}

\bib{CGST1990}{article}{
   author={Celeghini, Enrico},
   author={Giachetti, Riccardo},
   author={Sorace, Emanuele},
   author={Tarlini, Marco},
   title={Three-dimensional quantum groups from contractions of ${\rm
   SU}(2)_q$},
   date={1990},   
   issn={0022-2488},   
   journal={J. Math. Phys.},
   volume={31},
   number={11},
   pages={2548--2551},
   doi={10.1063/1.529000},
   review={\MR{1075731}},   
}

\bib{CGST1991}{article}{
   author={Celeghini, Enrico},
   author={Giachetti, Riccardo},
   author={Sorace, Emanuele},
   author={Tarlini, Marco},
   title={The quantum Heisenberg group $H(1)_q$},   
   date={1991},
   issn={0022-2488},
   journal={J. Math. Phys.},
   volume={32},
   number={5},
   pages={1155--1158},
   doi={10.1063/1.529311},   
   review={\MR{1103466}},
}

\bib{CGST1991a}{article}{
   author={Celeghini, Enrico},
   author={Giachetti, Riccardo},
   author={Sorace, Emanuele},
   author={Tarlini, Marco},
   title={The three-dimensional Euclidean quantum group $E(3)_q$ and its
   $R$-matrix},
   date={1991},   
   issn={0022-2488},   
   journal={J. Math. Phys.},
   volume={32},
   number={5},
   pages={1159--1165},
   doi={10.1063/1.529312},
   review={\MR{1103467}},   
}

\bib{DKSS2012}{article}{
  author={Daws, Matthew},
  author={Kasprzak, Pawe\l },
  author={Skalski, Adam},
  author={So\l{}tan, Piotr~M.},
  title={Closed quantum subgroups of locally compact quantum groups},
  date={2012},
  issn={0001-8708},
  journal={Adv. Math.},
  volume={231},
  number={6},
  pages={3473\ndash 3501},
  doi={10.1016/j.aim.2012.09.002},
  review={\MR {2980506}},
}

\bib{IW1953}{article}{
   author={In\"on\"u, Erdal},
   author={Wigner, Eugene P.},
   title={On the contraction of groups and their representations},
   journal={Proc. Nat. Acad. Sci. U.S.A.},
   volume={39},
   date={1953},
   pages={510--524},
   issn={0027-8424},
   review={\MR{55352}},
   doi={10.1073/pnas.39.6.510},
}

\bib{KMRW2016}{article}{
  author={Kasprzak, Pawe\l },
  author={Meyer, Ralf},
  author={Roy, Sutanu},
  author={Woronowicz, Stanis\l aw~L.},
  title={Braided quantum {$\rm SU(2)$} groups},
  date={2016},
  issn={1661-6952},
  journal={J. Noncommut. Geom.},
  volume={10},
  number={4},
  pages={1611\ndash 1625},
  doi={10.4171/JNCG/268},
  review={\MR {3597153}},
}

\bib{M1994}{article}{
  author={Majid, Shahn},
  title={Cross products by braided groups and bosonization},
  date={1994},
  issn={0021-8693},
  journal={J. Algebra},
  volume={163},
  number={1},
  pages={165\ndash 190},
  doi={10.1006/jabr.1994.1011},
  review={\MR {1257312}},
}

\bib{M1999}{article}{
   author={Majid, S.},
   title={Double-bosonization of braided groups and the construction of $U_q(\mathfrak{g})$},
   date={1999},   
   issn={0305-0041},   
   journal={Math. Proc. Cambridge Philos. Soc.},
   volume={125},
   number={1},
   pages={151--192},
   doi={10.1017/S0305004198002576},
   review={\MR{1645545}},   
}

\bib{M2000}{article}{
   author={Majid, Shahn},
   title={Braided-Lie bialgebras},
   date={2000},   
   issn={0030-8730},
   journal={Pacific J. Math.},
   volume={192},
   number={2},
   pages={329--356},
   review={\MR{1744574}},
   doi={10.2140/pjm.2000.192.329},
}

\bib{MRW2012}{article}{
  author={Meyer, Ralf},
  author={Roy, Sutanu},
  author={Woronowicz, Stanis\l aw~L.},
  title={Homomorphisms of quantum groups},
  date={2012},
  issn={1867-5778},
  journal={M\"{u}nster J. Math.},
  volume={5},
  pages={1\ndash 24},
  eprint={http://nbn-resolving.de/urn:nbn:de:hbz:6-88399662599},
  review={\MR {3047623}},
}

\bib{MRW2014}{article}{
  author={Meyer, Ralf},
  author={Roy, Sutanu},
  author={Woronowicz, Stanis\l aw~L.},
  title={Quantum group-twisted tensor products of {C{$^*$}}-algebras},
  date={2014},
  issn={0129-167X},
  journal={Internat. J. Math.},
  volume={25},
  number={2},
  pages={1450019, 37},
  doi={10.1142/S0129167X14500190},
  review={\MR {3189775}},
}

\bib{MRW2016}{article}{
  author={Meyer, Ralf},
  author={Roy, Sutanu},
  author={Woronowicz, Stanis\l aw~L.},
  title={Quantum group-twisted tensor products of {${\rm C}^*$}-algebras. {II}},
  date={2016},
  issn={1661-6952},
  journal={J. Noncommut. Geom.},
  volume={10},
  number={3},
  pages={859\ndash 888},
  doi={10.4171/JNCG/250},
  review={\MR {3554838}},
}

\bib{MRW2017}{article}{
  author={Meyer, Ralf},
  author={Roy, Sutanu},
  author={Woronowicz, Stanis\l aw~Lech},
  title={Semidirect products of {$\rm C^*$}-quantum groups: multiplicative unitaries approach},
  date={2017},
  issn={0010-3616},
  journal={Comm. Math. Phys.},
  volume={351},
  number={1},
  pages={249\ndash 282},
  doi={10.1007/s00220-016-2727-3},
  review={\MR {3613504}},
}

\bib{NV2010}{article}{
      author={Nest, Ryszard},
      author={Voigt, Christian},
       title={Equivariant {P}oincar\'{e} duality for quantum group actions},
        date={2010},
        ISSN={0022-1236},
     journal={J. Funct. Anal.},
      volume={258},
      number={5},
       pages={1466\ndash 1503},
         doi={10.1016/j.jfa.2009.10.015},
      review={\MR{2566309}},
}

\bib{R1985}{article}{
  author={Radford, David~E.},
  title={The structure of {H}opf algebras with a projection},
  date={1985},
  issn={0021-8693},
  journal={J. Algebra},
  volume={92},
  number={2},
  pages={322\ndash 347},
  doi={10.1016/0021-8693(85)90124-3},
  review={\MR {778452}},
}

\bib{R2013}{thesis}{
  author={Roy, Sutanu},
  title={{$\rm C^*$}-quantum groups with projection},
  type={phdthesis},
  institution={Georg-August Universit\"at G\"ottingen},
  date={2013},
  eprint={http://hdl.handle.net/11858/00-1735-0000-0022-5EF9-0},
}

\bib{R2016}{article}{
  author={Roy, Sutanu},
  title={Braided quantum groups and their bosonization in the C*-algebraic framework},
  date={2016},
  eprint={https://arxiv.org/abs/1601.00169v4},
}

\bib{S2020}{article}{
  author={So\l{}tan, Piotr~M.},
  title={Podle\'{s} spheres for the braided quantum {SU}(2)},
  date={2020},
  issn={0024-3795},
  journal={Linear Algebra Appl.},
  volume={591},
  pages={169\ndash 204},
  doi={10.1016/j.laa.2020.01.011},
  review={\MR {4053085}},
}

\bib{SW2007}{article}{
  author={So\l{}tan, Piotr~M.},
  author={Woronowicz, Stanis\l aw~L.},
  title={From multiplicative unitaries to quantum groups. {II}},
  date={2007},
  issn={0022-1236},
  journal={J. Funct. Anal.},
  volume={252},
  number={1},
  pages={42\ndash 67},
  doi={10.1016/j.jfa.2007.07.006},
  review={\MR {2357350}},
}

\bib{W1991a}{article}{
  author={Woronowicz, Stanis\l aw~L.},
  title={Quantum {$E(2)$} group and its {P}ontryagin dual},
  date={1991},
  issn={0377-9017},
  journal={Lett. Math. Phys.},
  volume={23},
  number={4},
  pages={251\ndash 263},
  doi={10.1007/BF00398822},
  review={\MR {1152695}},
}

\bib{W1991b}{article}{
  author={Woronowicz, Stanis\l aw~L.},
  title={Unbounded elements affiliated with {$C^*$}-algebras and noncompact quantum groups},
  date={1991},
  issn={0010-3616},
  journal={Comm. Math. Phys.},
  volume={136},
  number={2},
  pages={399\ndash 432},
  review={\MR {1096123}},
}

\bib{W1992a}{article}{
  author={Woronowicz, Stanis\l aw~L.},
  title={Operator equalities related to the quantum {$E(2)$} group},
  date={1992},
  issn={0010-3616},
  journal={Comm. Math. Phys.},
  volume={144},
  number={2},
  pages={417\ndash 428},
  review={\MR {1152380}},
}

\bib{W1992}{article}{
  author={Woronowicz, Stanis\l aw~L.},
  title={Quantum {${\rm SU}(2)$} and {$E(2)$} groups. {C}ontraction procedure},
  date={1992},
  issn={0010-3616},
  journal={Comm. Math. Phys.},
  volume={149},
  number={3},
  pages={637\ndash 652},
  review={\MR {1186047}},
}

\bib{W1995}{article}{
  author={Woronowicz, Stanis\l aw~L.},
  title={{$C^*$}-algebras generated by unbounded elements},
  date={1995},
  issn={0129-055X},
  journal={Rev. Math. Phys.},
  volume={7},
  number={3},
  pages={481\ndash 521},
  doi={10.1142/S0129055X95000207},
  review={\MR {1326143}},
}

\bib{W1996}{article}{
  author={Woronowicz, Stanis\l aw~L.},
  title={From multiplicative unitaries to quantum groups},
  date={1996},
  issn={0129-167X},
  journal={Internat. J. Math.},
  volume={7},
  number={1},
  pages={127\ndash 149},
  doi={10.1142/S0129167X96000086},
  review={\MR {1369908}},
}

\bib{W2001}{article}{
  author={Woronowicz, Stanis\l aw~L.},
  title={Quantum ``{$az+b$}'' group on complex plane},
  date={2001},
  issn={0129-167X},
  journal={Internat. J. Math.},
  volume={12},
  number={4},
  pages={461\ndash 503},
  doi={10.1142/S0129167X01000836},
  review={\MR {1841400}},
}

\bib{WZ2002}{article}{
  author={Woronowicz, Stanis\l aw~L.},
  author={Zakrzewski, S.},
  title={Quantum `{$ax+b$}' group},
  date={2002},
  issn={0129-055X},
  journal={Rev. Math. Phys.},
  volume={14},
  number={7-8},
  pages={797\ndash 828},
  doi={10.1142/S0129055X02001405},
  review={\MR {1932667}},
}
  \end{biblist}
\end{bibdiv}
\end{document}